\newtheorem{theorem}{Theorem}[section]
\newtheorem{corollary}[theorem]{Corollary}
\newtheorem{lemma}[theorem]{Lemma}
\newtheorem{proposition}[theorem]{Proposition}
\numberwithin{equation}{section}
\newcommand{\N}{\mathbb{N}}
\newcommand{\R}{\mathbb{R}}
\newcommand{\lv}{\left[}
\newcommand{\rv }{\right]}
\newcommand{\rr}{\mathbb{R}}
\newcommand{\CC}{\mathbb{C}}
\newcommand{\cc}{\mathbb{C}}
\newcommand{\eps}{\varepsilon}
\newcommand{\lde}{L^2(\rr^n)}
\newcommand {\be}{\begin{equation}}
\newcommand {\ee}{\end{equation}}
\newcommand {\ba}{\begin{array}}
\newcommand {\ea}{\end{array}}
\def\Rom#1{\uppercase\expandafter{\romannumeral #1}}
\def\polhk#1{\setbox0=\hbox{#1}{\ooalign{\hidewidth \lower1.5ex\hbox{`}\hidewidth\crcr\unhbox0}}}
\begin{document}

\title[Short-time asymptotics of the regularizing effect for semigroups]{Short-time asymptotics of the regularizing effect for semigroups generated by quadratic operators}

\author{M. Hitrik, K. Pravda-Starov \& J. Viola}

\address{\noindent \textsc{Michael Hitrik, Department of Mathematics, UCLA, Los Angeles CA 90095-1555, USA}}
\email{hitrik@math.ucla.edu}
\address{\noindent \textsc{Karel Pravda-Starov, IRMAR, CNRS UMR 6625, Universit\'e de Rennes 1, Campus de Beaulieu, 263 avenue du G\'en\'eral Leclerc, CS 74205,
35042 Rennes cedex, France}}
\email{karel.pravda-starov@univ-rennes1.fr}
\address{\noindent \textsc{Joe Viola, Laboratoire de Math\'ematiques Jean Leray, CNRS UMR 6629, 2 rue de la Houssini\`ere, Universit\'e de Nantes, BP 92208, 44322 Nantes, cedex 3, France}}
\email{joseph.viola@univ-nantes.fr}

\keywords{Quadratic operators, smoothing effect, hypoellipticity, subelliptic estimates} 
\subjclass[2010]{35B65, 35H20}

\begin{abstract}
We study accretive quadratic operators with zero singular spaces. These degenerate non-selfadjoint differential operators are known to be hypoelliptic and to generate contraction semigroups which are smoothing in the Schwartz space for any positive time. In this work, we study the short-time asymptotics of the regularizing effect induced by these semigroups. We show that these short-time asymptotics  of the regularizing effect depend on the directions of the phase space, and that this dependence can be nicely understood through the structure of the singular space. As a byproduct of these results, we derive sharp subelliptic estimates for accretive quadratic operators with zero singular spaces pointing out that the loss of derivatives with respect to the elliptic case also depends on the phase space directions according to the structure of the singular space. Some applications of these results are then given to the study of degenerate hypoelliptic Ornstein-Uhlenbeck operators and  degenerate hypoelliptic Fokker-Planck operators.    
\end{abstract}

\maketitle

\section{Introduction}

\subsection{Quadratic operators} We study in this work quadratic operators. This class of operators stands for pseudodifferential operators
\begin{equation}\label{3.1}
q^w(x,D_x)u(x) =\frac{1}{(2\pi)^n}\int_{\R^{2n}}{e^{i(x-y) \cdot \xi}q\Big(\frac{x+y}{2},\xi\Big)u(y)dyd\xi}, 
\end{equation}
defined by the Weyl quantization of complex-valued quadratic symbols 
\begin{align*}
q: \rr^{2n} &\rightarrow \cc, \quad  \quad  \quad  n \geq 1, \\
(x,\xi) &\mapsto q(x,\xi).
\end{align*}
These non-selfadjoint operators are in fact only differential operators since the Weyl quantization of the quadratic symbol
$x^{\alpha} \xi^{\beta}$, with $(\alpha,\beta) \in \N^{2n}$, $|\alpha+\beta|=2$, is simply given by
\begin{equation}\label{forsym}
(x^{\alpha} \xi^{\beta})^w=\textrm{Op}^w(x^{\alpha} \xi^{\beta})=\frac{x^{\alpha}D_x^{\beta}+D_x^{\beta} x^{\alpha}}{2}, 
\end{equation}
with $D_x=i^{-1}\partial_x$. The maximal closed realization of a quadratic operator $q^w(x,D_x)$ on $L^2(\rr^n)$, that is, the operator equipped with the domain
\begin{equation}\label{dom1}
D(q^w)=\big\{u \in L^2(\rr^n) : \ q^w(x,D_x)u \in L^2(\rr^n)\big\},
\end{equation}
where $q^w(x,D_x)u$ is defined in the distribution sense, is known to coincide with the graph closure of its restriction to the Schwartz space~\cite{mehler} (pp.~425-426),
$$q^w(x,D_x) : \mathscr{S}(\rr^n) \rightarrow \mathscr{S}(\rr^n).$$
Classically, to any quadratic form defined on the phase space 
$$q : \rr_x^n \times \rr_{\xi}^n \rightarrow \mathbb{C},$$
is associated a matrix $F \in M_{2n}(\CC)$ called its Hamilton map, or its fundamental matrix, which is defined as the unique matrix satisfying the identity
\begin{equation}\label{10}
\forall  (x,\xi) \in \R^{2n},\forall (y,\eta) \in \R^{2n}, \quad q\big((x,\xi),(y,\eta)\big)=\sigma\big((x,\xi),F(y,\eta)\big), 
\end{equation}
with $q(\cdot,\cdot)$ the polarized form associated to the quadratic form $q$, where $\sigma$ stands for the standard symplectic form
\begin{equation}\label{11}
\sigma\big((x,\xi),(y,\eta)\big)=\langle \xi, y \rangle -\langle x, \eta\rangle=\sum_{j=1}^n(\xi_j y_j-x_j \eta_j),
\end{equation}
with $x=(x_1,...,x_n)$, $y=(y_1,....,y_n)$, $\xi=(\xi_1,...,\xi_n)$, $\eta=(\eta_1,...,\eta_n) \in \cc^n$. We notice that a Hamilton map is skew-symmetric with respect to the symplectic form
\begin{multline}\label{a1}
\sigma\big((x,\xi),F(y,\eta)\big)=q\big((x,\xi),(y,\eta)\big)=q\big((y,\eta),(x,\xi)\big)\\
=\sigma\big((y,\eta),F(x,\xi)\big)=-\sigma\big(F(x,\xi),(y,\eta)\big), 
\end{multline}
by symmetry of the polarized form and skew-symmetry of the symplectic form. We observe from the definition that 
$$F=\frac{1}{2}\left(\begin{array}{cc}
\nabla_{\xi}\nabla_x q & \nabla_{\xi}^2q  \\
-\nabla_x^2q & -\nabla_{x}\nabla_{\xi} q 
\end{array} \right),$$
where the matrices $\nabla_x^2q=(a_{i,j})_{1 \leq i,j \leq n}$,  $\nabla_{\xi}^2q=(b_{i,j})_{1 \leq i,j \leq n}$, $\nabla_{\xi}\nabla_x q =(c_{i,j})_{1 \leq i,j \leq n}$,
$\nabla_{x}\nabla_{\xi} q=(d_{i,j})_{1 \leq i,j \leq n}$ are given by
$$a_{i,j}=\partial_{x_i,x_j}^2 q, \quad b_{i,j}=\partial_{\xi_i,\xi_j}^2q, \quad c_{i,j}=\partial_{\xi_i,x_j}^2q, \quad d_{i,j}=\partial_{x_i,\xi_j}^2q.$$

In~\cite{HPS}, the notion of singular space was introduced by the two first authors by pointing out the existence of a particular vector subspace in the phase space, which is intrinsically associated to a quadratic symbol $q$, and defined as the following finite intersection of kernels
\begin{equation}\label{h1bis}
S=\Big( \bigcap_{j=0}^{2n-1}\textrm{Ker}
\big[\textrm{Re }F(\textrm{Im }F)^j \big]\Big)\cap \rr^{2n},
\end{equation}
where $\textrm{Re }F$ and $\textrm{Im }F$ stand respectively for the real and imaginary parts of the Hamilton map $F$ associated to the quadratic symbol $q$,
$$\textrm{Re }F=\frac{1}{2}(F+\overline{F}), \quad \textrm{Im }F=\frac{1}{2i}(F-\overline{F}).$$
As pointed out in \cite{HPS,OPPS,karel,viola1}, the singular space plays a basic role in understanding the spectral and hypoelliptic properties of non-elliptic quadratic operators, as well as the spectral and pseudospectral properties of certain classes of degenerate doubly characteristic pseudodifferential operators~\cite{kps3,kps4,viola}. Some applications of these results to the study of degenerate hypoelliptic Ornstein-Uhlenbeck operators and degenerate hypoelliptic Fokker-Planck operators are also given in~\cite{OPPS2}.

When the quadratic symbol $q$ has a non-negative real part $\textrm{Re }q \geq 0$, the singular space can be defined in an equivalent way as the subspace in the phase space where all the Poisson brackets 
$$H_{\textrm{Im}q}^k \textrm{Re }q=\left(\frac{\partial \textrm{Im }q}{\partial\xi}\cdot \frac{\partial}{\partial x}-\frac{\partial \textrm{Im }q}{\partial x}\cdot \frac{\partial}{\partial \xi}\right)^k \textrm{Re } q, \quad k \geq 0,$$ 
are vanishing
$$S=\big\{X=(x,\xi) \in \rr^{2n} : \ (H_{\textrm{Im}q}^k \textrm{Re } q)(X)=0,\ k \geq 0\big\}.$$
This dynamical definition shows that the singular space corresponds exactly to the set of points $X \in \rr^{2n}$, where the real part of the symbol $\textrm{Re }q$ under the flow of the Hamilton vector field $H_{\textrm{Im}q}$ associated to its imaginary part
\begin{equation}\label{evg5}
t \mapsto \textrm{Re }q(e^{tH_{\textrm{Im}q}}X),
\end{equation}
vanishes to infinite order at $t=0$. This is also equivalent to the fact that the function \eqref{evg5} is identically zero on~$\rr$.

In this work, we first study the class of quadratic operators whose Weyl symbols have a non-negative real part $\textrm{Re }q \geq 0$ and a zero singular space $S=\{0\}$. These possibly non-elliptic non-selfadjoint operators are known to be hypoelliptic and to enjoy specific subelliptic properties which can be nicely characterized by the structure of the singular space~\cite{karel}. 
We recall from~\cite{HPS} (Theorem~1.2.2) that the spectrum of these operators acting on $L^2$ and equipped with the domain (\ref{dom1}), is discrete and only composed of eigenvalues with finite algebraic multiplicities given by
\begin{equation}\label{jkk1}
\sigma(q^w(x,D_x))=\Big\{\sum_{\substack{\lambda \in \sigma(F)\\  -i\lambda \in \CC_+}} (r_{\lambda}+2k_{\lambda})(-i\lambda) : k_{\lambda} \in \mathbb{N} \Big\},
\end{equation}
with $\CC_+=\{z \in \CC : \textrm{Re }z>0\}$, where $\mathbb{N}$ is the set of non-negative integers, $F$ denotes the Hamilton map (\ref{10}) of~$q$, and where
$r_{\lambda}$ stands for the dimension of the space of generalized eigenvectors of $F$ in $\CC^{2n}$ associated to the eigenvalue $\lambda$. Afterwards, we study a more general class of quadratic operators whose Weyl symbols have a non-negative real part $\textrm{Re }q \geq 0$ and whose singular spaces $S$ can be non-zero but still have a symplectic structure.

\subsection{Setting of the analysis}

Let $q : \rr^{n}_x \times \rr_{\xi}^n \rightarrow \cc$, with $n \geq 1$, be a quadratic form with a non-negative real part $\textrm{Re }q \geq 0$. We know from~\cite{mehler} (pp. 425-426) that the quadratic operator $q^w(x,D_x)$ equipped with the domain (\ref{dom1}), is maximal accretive and generates a contraction semigroup $(e^{-tq^w})_{t \geq 0}$ on $L^2(\rr^n)$. Furthermore, the result of~\cite{mehler} (Theorem~4.2) shows that for all $u \in \mathscr{S}(\rr^n)$, the mapping 
\begin{equation}\label{conti}
\begin{array}{cc}
[0,+\infty) & \rightarrow  \mathscr{S}(\rr^n)\\
t &  \mapsto  e^{-tq^w}u
\end{array}
\end{equation} 
is continuous. 
We first assume that the singular space of the quadratic form $q$ is zero
\begin{equation}\label{e1}
S=\Big(\bigcap_{j=0}^{2n-1}\textrm{Ker}
\big[\textrm{Re }F(\textrm{Im }F)^j \big]\Big)\cap \rr^{2n}=\{0\}.
\end{equation}
Under these assumptions, the quadratic operator generates a contraction semigroup $(e^{-tq^w})_{t \geq 0}$ on $L^2(\rr^n)$,
$$\left\lbrace
\begin{array}{c}
\partial_tu(t,x)+q^w(x,D_x) u(t,x)=0,  \\
u(t,\textrm{\textperiodcentered})|_{t=0}=u_0 \in L^2(\rr^n),
\end{array} \right.$$
which is smoothing in the Schwartz space~\cite{HPS} (Theorem~1.2.1), 
$$\forall u_0 \in L^2(\rr^n), \forall t>0, \quad e^{-tq^w}u_0 \in \mathscr{S}(\rr^n),$$
for any positive time.

In the present work, we aim at studying how the regularizing effect induced by the semigroup $(e^{-tq^w})_{t \geq 0}$ acts for small times $0<t \ll 1$. To that end, we analyze the short-time asymptotics of the following differentiations of the semigroup
\begin{equation}\label{we2}
\|(\langle x_0, x \rangle+\langle \xi_0,D_x \rangle) e^{-tq^w}u_0\|_{L^2(\rr^n)},
\end{equation}
with $x_0=\big((x_0)_1,....,(x_0)_n\big) \in \rr^n$, $\xi_0=\big((\xi_0)_1,....,(\xi_0)_n\big) \in \rr^n$,
$$\langle x_0, x \rangle+\langle \xi_0,D_x \rangle=\sum_{j=1}^n\big((x_0)_jx_j+(\xi_0)_jD_{x_j}\big),$$
and we aim at understanding how the behavior of the terms (\ref{we2}) depend on the phase space direction $X_0=(x_0,\xi_0) \in \rr^{2n}$. As indicated by previous works, this dependence is expected to be non-trivial. This is pointed out for instance by the work of H\'erau~\cite{herau_JFA} (Theorem~1.1) on the short-time behavior of the Fokker-Planck equation 
$$\left\lbrace
\begin{array}{c}
\partial_tf(t,x,v)+Kf(t,x,v)=0,  \\
f(t,\textrm{\textperiodcentered})|_{t=0}=f_0 \in B^2,
\end{array} \right.$$
where
$$B^2=\big\{f \in \mathcal{D}'(\rr_{x,v}^{2n}) : f e^{\frac{v^2}{2}+V(x)} \in L^2(e^{-(\frac{v^2}{2}+V(x))}dxdv)\big\}$$
and
$$Kf=v \cdot \partial_xf-\partial_xV(x) \cdot \partial_vf-\partial_v \cdot (\partial_v+v)f,$$
with a confining (not necessarily quadratic) potential. This work shows that the short-time asymptotics of the regularizing effect induced by the semigroup
\begin{equation}\label{hjfa}
\|(-\partial_v+v)e^{-tK}\|_{\mathcal{L}(B^2)} \leq \frac{C}{t^{\frac{1}{2}}}, \qquad  \|(-\partial_x+\partial_xV)e^{-tK}\|_{\mathcal{L}(B^2)} \leq \frac{C}{t^{\frac{3}{2}}},
\end{equation}
when $0<t \leq 1$, clearly depend on the directions of the phase space in which the semigroup is differentiated. The results of the present work for accretive quadratic operators with zero singular spaces allow in particular to recover the short-time asymptotics (\ref{hjfa}) in the case when the confining potential $V$ is quadratic. Afterwards, we discuss the case when singular spaces are possibly non-zero but still have a symplectic structure.

\subsection{Statements of the main results}

\subsubsection{Results for quadratic operators with zero singular spaces}

According to the assumption (\ref{e1}), we may consider $0 \leq k_0 \leq 2n-1$ the smallest integer satisfying
\begin{equation}\label{e2}
\Big(\bigcap_{j=0}^{k_0}\textrm{Ker}
\big[\textrm{Re }F(\textrm{Im }F)^j \big]\Big)\cap \rr^{2n}=\{0\}.
\end{equation}
This non-negative integer is a structural parameter of the singular space. It plays a basic role in understanding the subelliptic properties of the quadratic operator $q^w(x,D_x)$, and in measuring the loss of derivatives with respect to the elliptic case in the subelliptic estimates satisfied by this operator~\cite{karel} (Theorem~1.2.1).
The structure of the singular space also allows one to define the following vector subspaces in the phase space
\begin{equation}\label{we1}
V_k=\Big(\Big[\bigcap_{j=0}^{k}\textrm{Ker}\big[\textrm{Re }F(\textrm{Im }F)^j \big]\Big]\cap \rr^{2n}\Big)^{\perp}, \qquad 0 \leq k \leq k_0,
\end{equation}
where the orthogonality is taken in $\rr^{2n}$ with respect to the Euclidean scalar product
$$\langle (x,\xi),(y,\eta) \rangle=\sum_{j=1}^n(x_jy_j+\xi_j\eta_j),$$
with $x=(x_1,....,x_n)$, $\xi=(\xi_1,...,\xi_n)$, $y=(y_1,....,y_n)$, $\eta=(\eta_1,...,\eta_n) \in \rr^{n}$. According to (\ref{e2}), this family of vector subspaces is increasing for the inclusion
\begin{equation}\label{byebye1}
V_0 \subsetneq V_1 \subsetneq ... \subsetneq V_{k_0}=\rr^{2n},
\end{equation}
and allows one to define the index of any point in the phase space $X_0=(x_0,\xi_0) \in \rr^{2n}$ with respect to the singular space, as the smallest integer $0 \leq k_{X_0} \leq k_0$ satisfying $X_0 \in V_{k_{X_0}}$.

The following result shows that this notion of index allows one to determine the short-time asymptotics of the regularizing effect induced by the semigroup $(e^{-tq^w})_{t \geq 0}$ in the phase space direction given by the vector $X_0=(x_0,\xi_0) \in \rr^{2n}$:

\medskip

\begin{theorem}\label{th}
Let $q : \rr^{n}_x \times \rr_{\xi}^n \rightarrow \cc$, with $n \geq 1$, be a quadratic form with a non-negative real part $\emph{\textrm{Re }}q \geq 0$ and a zero singular space
$$S=\Big(\bigcap_{j=0}^{2n-1}\emph{\textrm{Ker}}
\big[\emph{\textrm{Re }}F(\emph{\textrm{Im }}F)^j \big]\Big)\cap \rr^{2n}=\{0\}.$$
Let $0 \leq k_0 \leq 2n-1$ be the smallest integer satisfying (\ref{e2}).
Then, there exists a positive constant $C>0$ such that for all $X_0=(x_0,\xi_0) \in \rr^{2n}$, $u \in L^2(\rr^n)$,  
$$\forall 0<t \leq 1, \quad \|(\langle x_0, x \rangle+\langle \xi_0,D_x \rangle) e^{-tq^w}u\|_{L^2(\rr^n)}\leq C|X_0| t^{-\frac{2k_{X_0}+1}{2}}\|u\|_{L^2(\rr^n)},$$
$$\forall t \geq 1, \quad \|(\langle x_0, x \rangle+\langle \xi_0,D_x \rangle) e^{-tq^w}u\|_{L^2(\rr^n)}\leq C|X_0|e^{-\omega_0t}\|u\|_{L^2(\rr^n)},$$
with $0 \leq k_{X_0} \leq k_0$ the index of the point $X_0 \in \rr^{2n}$ with respect to the singular space,
\begin{equation}\label{qz-1}
\omega_0=\sum_{\substack{\lambda \in \sigma(F) \\
-i \lambda \in \cc_+}}r_{\lambda}\emph{\textrm{Re}}(-i\lambda)>0, \qquad \cc_+=\{z \in \cc : \emph{\textrm{Re }}z>0\},
\end{equation}
where $r_{\lambda}$ stands for the dimension of the space of generalized eigenvectors of $F$ in $\cc^{2n}$ associated to the eigenvalue $\lambda \in \sigma(F)$, and where $|\cdot|$ denotes the Euclidean norm on $\rr^{2n}$.
\end{theorem}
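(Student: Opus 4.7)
The plan is to split the argument according to the two regimes. For the short-time bound $0 < t \le 1$, I would proceed by induction on $k = k_{X_0}$. The fundamental ingredient is that $-q^w$ generates an analytic contraction semigroup on $L^2(\rr^n)$: since the singular space is trivial, the numerical range of $q^w$ is contained in a proper subsector of $\{\textrm{Re } z \ge 0\}$ (a fact built into the spectral description of~\cite{HPS}), and the standard theory then yields the iterated bound $\|(q^w)^m e^{-tq^w}\|_{\cL(L^2)} \le C_m\,t^{-m}$ on $(0, 1]$. In the base case $k = 0$, the condition $X_0 \in V_0 = (\textrm{Ker}(\textrm{Re } F) \cap \rr^{2n})^\perp$ means that the linear form $\ell_{X_0}(X) := \langle x_0, x\rangle + \langle \xi_0, \xi\rangle$ vanishes on $\textrm{Ker}(\textrm{Re } q) \cap \rr^{2n}$, giving the pointwise phase-space inequality $|\ell_{X_0}(X)|^2 \le C|X_0|^2\,(\textrm{Re } q)(X)$. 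Lifting this via the sharp G\aa rding inequality to the operator inequality $\|\ell_{X_0}^w u\|_{L^2}^2 \le C|X_0|^2\bigl(\textrm{Re}\langle q^w u, u\rangle + \|u\|_{L^2}^2\bigr)$ and applying it to $u = e^{-tq^w}u_0$ with $\textrm{Re}\langle q^w u, u\rangle \le \|q^w u\|\,\|u\| \le (C/t)\|u_0\|^2$ yields the $t^{-1/2}$ bound.

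For the inductive step from $V_k$ to $V_{k+1}$, the key algebraic fact is that $[\textrm{Im } q^w, \ell_{Y_0}^w] = i^{-1}\{\textrm{Im } q, \ell_{Y_0}\}^w$ is again the Weyl quantization of a linear form (the Moyal bracket has no higher-order remainder for a quadratic-times-linear symbol). The definition of $V_{k+1}$ via the kernel condition on $\textrm{Re } F(\textrm{Im } F)^{k+1}$ should allow one to write every $X_0 \in V_{k+1}$ as $X_0 = W_0 + Z_0$ with $Z_0 \in V_k$ and $\ell_{W_0}^w = i^{-1}[\textrm{Im } q^w, \ell_{Y_0}^w]$ for some $Y_0 \in V_k$ satisfying $|Y_0| + |Z_0| \le C|X_0|$. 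Splitting
$$\ell_{X_0}^w e^{-tq^w} u_0 = i^{-1}\bigl[(\textrm{Im } q^w)\,\ell_{Y_0}^w - \ell_{Y_0}^w\,(\textrm{Im } q^w)\bigr]e^{-tq^w} u_0 + \ell_{Z_0}^w e^{-tq^w} u_0,$$
factoring $e^{-tq^w} = e^{-tq^w/2}\,e^{-tq^w/2}$, and combining the inductive bound $\|\ell_{Y_0}^w e^{-tq^w/2}\|_{\cL(L^2)} \le C|Y_0|\,t^{-(2k+1)/2}$ with $\|\textrm{Im } q^w e^{-tq^w/2}\|_{\cL(L^2)} \le C/t$, one obtains the target of order $t^{-1}\cdot t^{-(2k+1)/2} = t^{-(2(k+1)+1)/2}$. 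For the long-time regime $t \ge 1$, I factor $e^{-tq^w} = e^{-(t-1)q^w}\,e^{-q^w}$; the subelliptic estimates of~\cite{karel} ensure that $q^w$ has compact resolvent, and together with the spectrum formula~\eqref{jkk1} the Gearhart--Pr\"uss theorem gives $\|e^{-sq^w}\|_{\cL(L^2)} \le C e^{-\omega_0 s}$ for $s \ge 0$; multiplying by the short-time bound at $t = 1$ yields the exponential decay.

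The hard part is the inductive step. Two points require care. First, the algebraic decomposition of $X_0 \in V_{k+1}$ above is purely linear-algebraic but must be arranged so that the constants do not blow up with $k$, which calls for a careful choice of bases of $V_{k+1}$ complementary to $V_k$ and adapted to the range of $\textrm{Im } F$. Second, and more delicate, is the bound $\|\textrm{Im } q^w e^{-tq^w/2}\|_{\cL(L^2)} \le C/t$: while $\|q^w e^{-tq^w/2}\| \le C/t$ is immediate by analyticity, $\textrm{Im } q^w = (q^w - (q^w)^*)/(2i)$ is not directly dominated by $q^w$ in operator norm since $q^w$ is non-selfadjoint, and a separate bound on $(q^w)^* e^{-tq^w/2}$ must be established. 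I expect this to follow either from an explicit Mehler-type representation of $e^{-tq^w}$ as a Fourier integral operator with complex phase, or from a Weyl calculus argument showing that $a^w e^{-tq^w/2}$ has norm $O(t^{-1})$ for any quadratic symbol $a$. As an alternative bypassing the commutator induction entirely, one could work directly with the explicit Gaussian kernel $K_t(x,y)$ of $e^{-tq^w}$ and read the $t$-dependence off the positive quadratic weight in the phase, whose coercivity constant should scale like $t^{2k+1}$ in the direction $V_k$.
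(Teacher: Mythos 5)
Your short-time argument rests on a premise that is false precisely in the degenerate regime the theorem is about: zero singular space does \emph{not} imply that the numerical range of $q^w$ lies in a proper subsector of $\{\textrm{Re}\,z\geq 0\}$, nor that $e^{-tq^w}$ is an analytic semigroup. Already for the Kramers--Fokker--Planck quadratic model $q(x,v,\xi,\eta)=\eta^2+v^2+i(v\xi-x\eta)$ (zero singular space, $k_0=1$) one has $\textrm{Re}\,q=\varepsilon^2$ while $\textrm{Im}\,q=\varepsilon^{-1}$ along suitable points, so the range of the symbol (hence essentially the numerical range) is not sectorial; and since $t\|q^we^{-tq^w}\|_{\cL(L^2)}$ bounded on $(0,1]$ is \emph{equivalent} to analyticity, the bound $\|q^we^{-tq^w}\|\leq C/t$ fails there (the true size is of order $t^{-2}$, consistent with Corollary~1.2 with $m=2$ and with H\'erau's estimates (\ref{hjfa})). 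The same applies, a fortiori, to the key estimate of your inductive step, $\|(\textrm{Im}\,q)^we^{-tq^w/2}\|\leq C/t$, which is a genuinely second-order differentiation of the semigroup and is \emph{not} $O(t^{-1})$ when $k_0\geq 1$; with the correct size of that factor your induction produces a loss strictly worse than $t^{-(2(k+1)+1)/2}$ and does not close. There is also a structural problem in the same step: after writing $\ell_{X_0}^w=i^{-1}[(\textrm{Im}\,q)^w,\ell_{Y_0}^w]+\ell_{Z_0}^w$ you cannot ``factor $e^{-tq^w}=e^{-tq^w/2}e^{-tq^w/2}$'' and insert the second half-semigroup between $(\textrm{Im}\,q)^w$ and $\ell_{Y_0}^w$, since none of these operators commute; commuting a linear form through the semigroup is possible, but only via the exact Mehler relation $\ell_{X_0}^we^{-tq^w}=e^{-tq^w}\ell_{T_tX_0}^w$ with a \emph{complex} transported direction (this is what the paper uses, but only for Corollary~1.2, after Theorem~1.1 is available). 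Your base case ($k_{X_0}=0$) asserts a true inequality, but its proof again invokes $\|q^we^{-tq^w}\|\leq C/t$, so even that step is not justified as written. Finally, for $t\geq 1$, Gearhart--Pr\"uss requires a uniform resolvent bound on a vertical line, which does not follow from compact resolvent plus the spectrum formula (\ref{jkk1}) for a non-selfadjoint, non-sectorial operator; the decay $\|e^{-tq^w}\|\leq Ce^{-\omega_0t}$ has to be imported from the quantitative return-to-equilibrium result of \cite{OPPS} (as the paper does) or proved by a separate resolvent/hypocoercivity estimate.

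The paper's proof avoids all of this by never differentiating the semigroup twice: it builds a time-dependent quadratic form $Q_t(X)=\sum_j a_jt^{2j+1}\,\textrm{Re}\,q((\textrm{Im}\,F)^jX)-\sum_j b_jt^{2j+2}N_j(X)$ whose coercivity in a direction of index $k$ scales like $t^{2k+1}$ (Lemmas~2.1--2.2), establishes the pointwise differential inequality $\tfrac{d}{dt}Q_t+\tfrac12 H_{\textrm{Im}q}Q_t\leq C\,\textrm{Re}\,q$ for $0<t\leq 1$ (Lemma~2.5), and then shows that the Lyapunov functional $G(t)=(Q_t^we^{-\frac{t}{2}q^w}u,e^{-\frac{t}{2}q^w}u)_{L^2}+C\|e^{-\frac{t}{2}q^w}u\|_{L^2}^2$ stays bounded, the only operator-theoretic inputs being the exact Weyl bracket identity, positivity of Weyl quantizations of non-negative quadratic forms, and the Fefferman--Phong inequality for the term $Q_t^w(\textrm{Re}\,q)^w$. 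If you want to salvage your strategy, the viable route is the alternative you mention in passing: the explicit Mehler/Gaussian representation of $e^{-tq^w}$ (as in \cite{mehler} and \cite{HPSVII}), not analyticity plus commutators.
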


\medskip

This result shows that the structure of the singular space accounting for the family of vector subspaces $(V_k)_{0 \leq k \leq k_0}$, 
allows one to sharply describe the short-time asymptotics of the regularizing effect induced by the semigroup $(e^{-tq^w})_{t \geq 0}$. The degeneracy degree of the phase space direction $X_0=(x_0,\xi_0) \in \rr^{2n}$ given by the index with respect to the singular space directly accounts for the blow-up upper bound $t^{-(2k_{X_0}+1)/2}$, for small times $t \to 0^+$. Regarding the large time asymptotics, we notice from (\ref{jkk1}) that the rate of exponential decay $-\omega_0$ naturally corresponds to the spectral abscissa of the operator $-q^w(x,D_x)$.

As a corollary, we prove the following upper bounds for iterated differentiations of the semigroup:

\medskip

\begin{corollary}\label{thj}
Let $q : \rr^{n}_x \times \rr_{\xi}^n \rightarrow \cc$, with $n \geq 1$, be a quadratic form with a non-negative real part $\emph{\textrm{Re }}q \geq 0$ and a zero singular space
$$S=\Big(\bigcap_{j=0}^{2n-1}\emph{\textrm{Ker}}
\big[\emph{\textrm{Re }}F(\emph{\textrm{Im }}F)^j \big]\Big)\cap \rr^{2n}=\{0\}.$$ 
Let $0 \leq k_0 \leq 2n-1$ be the smallest integer satisfying (\ref{e2}).
Then, there exists a positive constant $C>1$ such that for all $m \geq 1$, $X_1=(x_1,\xi_1)  \in \rr^{2n}$, ..., $X_m=(x_m,\xi_m)  \in \rr^{2n}$, $u \in L^2(\rr^n)$,
\begin{multline*}
\forall 0<t \leq 1, \quad \big\|(\langle x_1, x \rangle+\langle \xi_1,D_x \rangle)\ ...\ (\langle x_m, x \rangle+\langle \xi_m,D_x \rangle) e^{-tq^w}u\big\|_{L^2(\rr^n)}\\
\leq C^{m}(m!)^{\frac{2k_0+1}{2}}\Big(\prod_{j=1}^m|X_j|\Big)t^{-\frac{(2k_0+1)m}{2}}\|u\|_{L^2(\rr^n)},
\end{multline*}
\begin{multline*}
\forall t \geq 1, \quad \big\|(\langle x_1, x \rangle+\langle \xi_1,D_x \rangle)\ ...\ (\langle x_m, x \rangle+\langle \xi_m,D_x \rangle) e^{-tq^w}u\big\|_{L^2(\rr^n)}\\
\leq C^m(m!)^{\frac{2k_0+1}{2}}\Big(\prod_{j=1}^m|X_j|\Big)e^{-\omega_0t}\|u\|_{L^2(\rr^n)},
\end{multline*}
with 
$$\omega_0=\sum_{\substack{\lambda \in \sigma(F) \\
-i \lambda \in \cc_+}}r_{\lambda}\emph{\textrm{Re}}(-i\lambda)>0, \qquad \cc_+=\{z \in \cc : \emph{\textrm{Re }}z>0\},$$
where $r_{\lambda}$ stands for the dimension of the space of generalized eigenvectors of $F$ in $\cc^{2n}$ associated to the eigenvalue $\lambda \in \sigma(F)$, and where $|\cdot|$ denotes the Euclidean norm on $\rr^{2n}$.
\end{corollary}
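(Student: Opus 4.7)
The plan is to iterate Theorem \ref{th} by induction on $m$. The essential tool is a commutation identity: writing $\ell_X^w := \langle x_0, x\rangle + \langle \xi_0, D_x\rangle$ for $X = (x_0, \xi_0) \in \cc^{2n}$, the fact that $q$ is quadratic ensures that $[q^w, \ell_X^w] = -i\{q, \ell_X\}^w$ remains a linear form in $(x, D_x)$, corresponding to some $\mathcal{L}X \in \cc^{2n}$, where $\mathcal{L}$ is a linear endomorphism of $\cc^{2n}$ determined by the Hamilton map $F$. Exponentiating this adjoint action yields
$$\ell_X^w e^{-sq^w}\phi = e^{-sq^w} \ell_{Y(s)}^w \phi, \qquad Y(s) := e^{s\mathcal{L}} X \in \cc^{2n}, \quad \phi \in \mathscr{S}(\rr^n),\ s \ge 0,$$
with $|Y(s)| \le C_0 |X|$ uniformly for $s \in [0, 1]$ and $X \in \cc^{2n}$. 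This identity is verifiable on Schwartz space by a Duhamel/series argument, despite $e^{sq^w}$ not extending to a bounded operator on $L^2(\rr^n)$ for $s > 0$.

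For $0 < t \le 1$, we prove by induction on $m \ge 1$ the estimate
$$\bigl\|\ell_{X_1}^w \cdots \ell_{X_m}^w e^{-tq^w}u\bigr\|_{L^2(\rr^n)} \leq K^m(m!)^{\alpha}\prod_{j=1}^m|X_j| \cdot t^{-\alpha m}\|u\|_{L^2(\rr^n)}, \qquad \alpha := \frac{2k_0+1}{2},$$
for all $X_1, \ldots, X_m \in \cc^{2n}$ and $u \in L^2(\rr^n)$, with some $K > 0$ independent of $m$. The case $m = 1$ is Theorem \ref{th} (using $k_{X_0} \le k_0$ when $t \le 1$, and decomposing complex $X$ via $\ell_Y^w = \ell_{\Reelle Y}^w + i \ell_{\Ima Y}^w$). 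For the step $m \to m+1$, we split $e^{-tq^w} = e^{-(at)q^w} \circ e^{-((1-a)t)q^w}$ with $a := m/(m+1) \in (0, 1)$. Since $e^{-((1-a)t)q^w} u \in \mathscr{S}(\rr^n)$ by the smoothing property, the commutation identity applied to the outermost factor $\ell_{X_{m+1}}^w$ produces
$$\ell_{X_1}^w \cdots \ell_{X_{m+1}}^w e^{-tq^w} u = \ell_{X_1}^w \cdots \ell_{X_m}^w e^{-(at)q^w} v, \qquad v := \ell_{Y(at)}^w e^{-((1-a)t)q^w} u,$$
with $|Y(at)| \le C_0 |X_{m+1}|$. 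Theorem \ref{th} controls $\|v\|_{L^2(\rr^n)} \le C |X_{m+1}| ((1-a)t)^{-\alpha} \|u\|_{L^2(\rr^n)}$, while the inductive hypothesis applied at time $at \in (0,1]$ controls the remaining $m$ factors acting on $v$. Multiplying and using the optimization $a^{-\alpha m}(1-a)^{-\alpha} = \bigl((m+1)^{m+1}/m^m\bigr)^\alpha \le (e(m+1))^\alpha$ at $a = m/(m+1)$ closes the induction, provided $K \ge e^\alpha C C_0$ is chosen large enough at the outset.

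For the large-time range $t \ge 1$, write $e^{-tq^w} = e^{-q^w} \circ e^{-(t-1)q^w}$, apply the short-time bound at $t = 1$ to the first factor, and invoke the exponential decay $\|e^{-sq^w}\|_{\mathcal{L}(L^2(\rr^n))} \le M e^{-\omega_0 s}$ of the contraction semigroup (a consequence of the spectral description \eqref{jkk1} and the Mehler-type representation of $e^{-sq^w}$, see \cite{HPS}) on the second; redefining the constant produces the claimed $e^{-\omega_0 t}$ decay. The main obstacle is the clean derivation and justification of the commutation identity in the complex, non-self-adjoint setting, together with the uniform coefficient bound $|Y(s)| \le C_0 |X|$ for $s \in [0, 1]$; once these structural ingredients are in place, the inductive scheme and the explicit optimization of the splitting parameter $a$ proceed routinely and deliver the announced factorial growth $(m!)^{(2k_0+1)/2}$ of the constant.
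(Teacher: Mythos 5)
Your proposal is correct and follows essentially the same route as the paper: the commutation identity $\ell_X^w e^{-sq^w}=e^{-sq^w}\ell_{Y(s)}^w$ is exactly the relation (\ref{qz2}) that the paper imports from H\"ormander's Mehler-formula work, and the rest is the same scheme of splitting the semigroup, applying Theorem~\ref{th} to each piece with the uniform bound on the evolved (complex) directions, and finishing the large-time case via $e^{-tq^w}=e^{-q^w}e^{-(t-1)q^w}$ and the decay estimate (\ref{ev1}). The only cosmetic difference is that you run an induction with the unequal split $a=m/(m+1)$ and the bound $(1+1/m)^m\le e$, whereas the paper splits directly into $m$ equal factors $e^{-(t/m)q^w}$ and converts $m^{\alpha m}$ into $C^m(m!)^{\alpha}$ by Stirling; both yield the same factorial growth.
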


\medskip

The result of Corollary~\ref{thj} provides an upper bound in $\mathcal{O}(t^{-\frac{(2k_0+1)m}{2}})$ for the short-time asymptotics of $m$ differentiations of the semigroup $(e^{-tq^w})_{t \geq 1}$. Unfortunately, this result does not disclose that these short-time asymptotics do depend on the phase space directions of the differentiations. It would be actually most interesting to understand if the upper bound in Corollary~\ref{thj} may be sharpened to 
$$\mathcal{O}(t^{-\frac{2(k_{X_1}+...+k_{X_m})+m}{2}}),$$
where $k_{X_j}$ denotes the index of the direction $X_j=(x_j,\xi_j) \in \rr^{2n}$ with respect to the singular space.

By using the Sobolev inequality and the estimate $(k+l)! \leq 2^{k+l}(k!)(l!)$, which holds for any $k,l \geq 0$, we notice  that the result of Corollary~\ref{thj} implies that there exists a positive constant $C>1$ such that for all $\alpha, \beta \in \mathbb{N}^n$, $u \in L^2(\rr^n)$,
$$\forall 0<t \leq 1, \quad \sup_{x \in \rr^n}|x^{\alpha} \partial_x^{\beta}e^{-tq^w}u|   
\leq C^{1+|\alpha|+|\beta|}(\alpha!)^{\frac{2k_0+1}{2}}(\beta!)^{\frac{2k_0+1}{2}}t^{-\frac{2k_0+1}{2}(|\alpha|+|\beta|+s)}\|u\|_{L^2},$$
$$\forall t \geq 1, \quad \sup_{x \in \rr^n}|x^{\alpha} \partial_x^{\beta}e^{-tq^w}u|   
\leq C^{1+|\alpha|+|\beta|}(\alpha!)^{\frac{2k_0+1}{2}}(\beta!)^{\frac{2k_0+1}{2}}e^{-\omega_0t}\|u\|_{L^2},$$
where $s$ is an integer satisfying $s>n/2$. These estimates show that the semigroup $(e^{-tq^w})_{t \geq 0}$ is smoothing in the Gelfand-Shilov space $S_{\frac{2k_0+1}{2}}^{\frac{2k_0+1}{2}}(\rr^n)$ for any positive time $t>0$, and provide a precise control of the Gelfand-Shilov seminorms. We refer the reader to the appendix in Section~\ref{appendix} for the definition of the Gelfand-Shilov regularity. In the work~\cite{HPSVII} (Theorem~1.2) prepared simultaneously with the present one, this regularizing result is sharpened  and the semigroup $(e^{-tq^w})_{t \geq 0}$ is showed to be actually smoothing in the Gelfand-Shilov space $S_{1/2}^{1/2}(\rr^n)$ for any positive time $t>0$, with a control in $\mathcal{O}(t^{-\frac{2k_0+1}{2}(|\alpha|+|\beta|+2n+s)})$ of the seminorms as $t \to 0^+$, where $s$ is an integer satisfying $s>n/2$.

As mentioned above, quadratic operators whose symbols have a non-negative real part and a zero singular space satisfy specific subelliptic estimates where the loss of derivatives with respect to the elliptic case directly depends on the structural parameter of the singular space $0 \leq k_0 \leq 2n-1$. These subelliptic estimates were proven to hold in~\cite{karel} (Theorem~1.2.1), by using a multiplier method. The construction of the multiplier is the core of the work~\cite{karel} (Proposition~2.0.1). This construction is very technical and quite intricate. We show in the present work that these estimates can be recovered as a byproduct of the result of Theorem~\ref{th}. In the following statement, we denote by 
$$\langle (x,D_x) \rangle^{s}=(1+D_x^2+x^2)^{\frac{s}{2}}, \quad s \geq 0, \quad D_x=i^{-1}\partial_x,$$ 
the operator defined by functional calculus of the harmonic oscillator.

\medskip

\begin{corollary}\label{th0}
Let $q : \rr^{n}_x \times \rr_{\xi}^n \rightarrow \cc$, with $n \geq 1$, be a quadratic form with a non-negative real part $\emph{\textrm{Re }}q \geq 0$ and a zero singular space 
$$S=\Big(\bigcap_{j=0}^{2n-1}\emph{\textrm{Ker}}
\big[\emph{\textrm{Re }}F(\emph{\textrm{Im }}F)^j \big]\Big)\cap \rr^{2n}=\{0\}.$$ 
Let $0 \leq k_0 \leq 2n-1$ be the smallest integer satisfying (\ref{e2}).
Then, there exists a positive constant $C>0$ such that 
$$\forall u \in D(q^w), \quad \|\langle (x,D_x) \rangle^{\frac{2}{2k_0+1}}u\|_{L^2(\rr^n)} \leq C(\|q^w(x,D_x)u\|_{L^2(\rr^n)}+\|u\|_{L^2(\rr^n)}).$$
\end{corollary}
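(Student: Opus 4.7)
The plan is to derive the subelliptic estimate directly from the short-time semigroup smoothing bounds of Theorem~\ref{th}. Throughout, set $\Lambda = \langle(x,D_x)\rangle$, $\mu = (2k_0+1)/2$, and $s_0 = 2/(2k_0+1) = 1/\mu$.

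First I would convert the directional bound of Theorem~\ref{th} into a uniform operator bound on $\Lambda e^{-tq^w}$. Denote by $\ell_{X_0}(x,\xi)=\langle x_0,x\rangle+\langle \xi_0,\xi\rangle$ the linear symbol attached to $X_0\in\rr^{2n}$, and by $(e_j)_{1\leq j\leq 2n}$ the standard basis of $\rr^{2n}$. The self-adjointness of each $\ell^w_{e_j}$ together with the identity $\Lambda^2=1+\sum_{j=1}^{2n}(\ell^w_{e_j})^2$ yields
\begin{equation*}
\|\Lambda e^{-tq^w}u\|_{L^2}^2 = \|e^{-tq^w}u\|_{L^2}^2 + \sum_{j=1}^{2n} \|\ell^w_{e_j} e^{-tq^w}u\|_{L^2}^2.
\end{equation*}
Applying Theorem~\ref{th} with $X_0=e_j$, combined with the crude bound $k_{e_j}\leq k_0$ and the contraction property $\|e^{-tq^w}\|_{\mathcal{L}(L^2)}\leq 1$, then produces
\begin{equation*}
\|\Lambda e^{-tq^w}\|_{\mathcal{L}(L^2)} \leq C t^{-\mu}\text{ for }0<t\leq 1, \qquad \|\Lambda e^{-tq^w}\|_{\mathcal{L}(L^2)} \leq C e^{-\omega_0 t}\text{ for }t\geq 1.
\end{equation*}

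Next I would interpolate in the $\Lambda$ exponent. Since $\Lambda^{iy}$ is unitary on $L^2(\rr^n)$ for every $y\in\rr$ and $e^{-tq^w}$ is an $L^2$-contraction, the analytic family $z\mapsto \Lambda^z e^{-tq^w}$ on the strip $\{0\leq\operatorname{Re} z\leq 1\}$ has norm at most $1$ on $\operatorname{Re} z=0$ and at most $Ct^{-\mu}$ on $\operatorname{Re} z=1$. Stein's three-lines theorem then yields
\begin{equation*}
\|\Lambda^s e^{-tq^w}\|_{\mathcal{L}(L^2)} \leq C t^{-s\mu},\quad 0<t\leq 1,\quad s\in[0,1].
\end{equation*}

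To conclude, I would exploit the maximal accretivity of $q^w+I$ through the resolvent representation $(q^w+I)^{-1} = \int_0^\infty e^{-t} e^{-tq^w}\,dt$. For every $s<s_0$, since $s\mu<1$, the integral $\int_0^1 e^{-t} t^{-s\mu}\,dt$ converges, and together with the exponential decay of the semigroup for $t\geq 1$ this gives the resolvent bound $\|\Lambda^s(q^w+I)^{-1}\|_{\mathcal{L}(L^2)}\leq C_s$; specializing to $f=(q^w+I)u$ for $u\in D(q^w)$ produces the subelliptic estimate for every $s<s_0$. The main obstacle is the endpoint $s=s_0$: since $s_0\mu=1$, the integral $\int_0^1 t^{-1}\,dt$ is logarithmically divergent, so the naive argument misses the sharp exponent. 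I expect to close the endpoint either by a real-interpolation (Peetre $K$-method) argument between the off-endpoint bound $\|\Lambda^s(q^w+I)^{-1}\|_{\mathcal{L}(L^2)}\le C_s$ for $s<s_0$ and the companion estimate $\|\Lambda(q^w+I)^{-\beta}\|_{\mathcal{L}(L^2)}\le C_\beta$ for $\beta>\mu$ (the latter following directly from absolute convergence of $\int_0^\infty t^{\beta-1}e^{-t}\|\Lambda e^{-tq^w}\|\,dt$), or by a refinement that uses the directional index $k_{X_0}$ provided by Theorem~\ref{th} in place of the crude global bound $k_{X_0}\leq k_0$.
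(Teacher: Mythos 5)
Your first two steps (the uniform bound $\|\Lambda e^{-tq^w}\|_{\mathcal{L}(L^2)}\le Ct^{-\mu}$ obtained from Theorem~\ref{th}, and the resolvent integral $(q^w+I)^{-1}=\int_0^\infty e^{-t}e^{-tq^w}dt$ giving the estimate for every $s<s_0$) are sound, but the proposal stops exactly where the corollary begins: the endpoint $s=s_0=\frac{2}{2k_0+1}$ is the whole content of the statement, and neither of the two fixes you sketch closes it. Interpolating your off-endpoint resolvent bounds against each other, or against the companion bound $\|\Lambda(q^w+I)^{-\beta}\|_{\mathcal{L}(L^2)}\le C_\beta$ for $\beta>\mu$, cannot beat the endpoint: for instance, interpolating between the identity and $\Lambda(q^w+I)^{-\beta}$ yields boundedness of $\Lambda^{\theta}(q^w+I)^{-\theta\beta}$, and imposing $\theta\beta=1$ forces $\theta=1/\beta<1/\mu=s_0$, so you land strictly below the endpoint again; the constants $C_s$ in your family blow up as $s\to s_0$, and no interpolation of those bounds alone removes that blow-up.

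The missing ingredient is to interpolate between the \emph{spaces} $L^2(\rr^n)$ and $\mathcal{H}^1=D(\langle(x,D_x)\rangle)$, using the semigroup itself to estimate the $K$-functional of a fixed $u\in D(q^w)$: write $u=(I-e^{-tq^w})u+e^{-tq^w}u$, and note that $\|(I-e^{-tq^w})u\|_{L^2}\le t\|q^w(x,D_x)u\|_{L^2}$ by accretivity and the contraction property, while $\|e^{-tq^w}u\|_{\mathcal{H}^1}\le Ct^{-\mu}\|u\|_{L^2}$. This is exactly the real-interpolation result the paper invokes (Proposition~\ref{lprop}, i.e.\ Corollary~5.13 of \cite{interpolation}), applicable because $\mu=\frac{2k_0+1}{2}>1$ when $k_0\ge 1$; it gives the continuous inclusion $D(q^w)\subset\big(L^2(\rr^n),\mathcal{H}^1\big)_{1/\mu,2}$ with no logarithmic loss, and the identifications $\big(L^2,\mathcal{H}^1\big)_{\theta,2}=\big[L^2,\mathcal{H}^1\big]_{\theta}=D\big(\langle(x,D_x)\rangle^{\theta}\big)$ for the Hilbert couple then deliver the sharp exponent $\theta=\frac{2}{2k_0+1}$. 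Note also that your scheme cannot treat $k_0=0$ at all: there $s_0=2$, while interpolating between $\Lambda^0$ and $\Lambda^1$ caps the attainable exponent at $1$ (and $\mu=\frac12$ violates the hypothesis $\rho>1$ of the real-interpolation step); the paper handles this case separately by observing that $\textrm{Re}\,q$ is then positive definite, so $q^w(x,D_x)$ is elliptic and the estimate with exponent $2$ follows from the elliptic theory.
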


\medskip

In the work~\cite{HPSVII}, we develop an alternative approach to the proof of Corollary~\ref{th0}, based on the study of the semigroup 
generated by the operator $q^w(x,D_x)$, viewed as a quadratic Fourier integral operator with complex phase, on the FBI transform side. 

More generally, we show that these subelliptic estimates can be sharpened and improved in the directions of the phase space which are less degenerate, that is, with smaller indices with respect to the singular space. To that end, we introduce the following non-negative quadratic forms
\begin{equation}\label{rg0}
r_k(X)=\sum_{j=0}^k\textrm{Re }q\big((\textrm{Im }F)^jX\big) \geq 0, \quad X=(x,\xi) \in \rr^{2n},
\end{equation}
for any $0 \leq k \leq k_0$, where $0 \leq k_0 \leq 2n-1$ is the smallest integer satisfying (\ref{e2}).
Equipped with the domain 
\begin{equation}\label{rg2}
D(\Lambda_k^2)=D(r_k^w)=\big\{u \in L^2(\rr^n) : r_k^w(x,D_x)u \in L^2(\rr^n)\big\},
\end{equation}
the selfadjoint operator 
\begin{equation}\label{rg1}
\Lambda_k^2=1+r_k^w(x,D_x),
\end{equation} 
is positive (see Lemma~\ref{lem4}), 
$$\forall u \in D(\Lambda_k^2), \quad (\Lambda_k^2 u,u)_{L^2(\rr^n)}=(r_k^w(x,D_x)u,u)_{L^2(\rr^n)}+\|u\|_{L^2(\rr^n)}^2 \geq \|u\|_{L^2(\rr^n)}^2.$$
We shall consider the fractional powers of these positive operators 
\begin{equation}\label{tio1}
\Lambda_k^{\frac{2}{2k+1}}=(\Lambda_k^2)^{\frac{1}{2k+1}}=\big(1+r_k^w(x,D_x)\big)^{\frac{1}{2k+1}}.
\end{equation} 
We refer the reader for instance to~\cite{interpolation} (Chapter~4) for the definition of these fractional powers of positive operators. By using these operators, the result of Corollary~\ref{th0} can be improved as follows:

\medskip

\begin{theorem}\label{th-1}
Let $q : \rr^{n}_x \times \rr_{\xi}^n \rightarrow \cc$, with $n \geq 1$, be a quadratic form with a non-negative real part $\emph{\textrm{Re }}q \geq 0$ and a zero singular space 
$$S=\Big(\bigcap_{j=0}^{2n-1}\emph{\textrm{Ker}}
\big[\emph{\textrm{Re }}F(\emph{\textrm{Im }}F)^j \big]\Big)\cap \rr^{2n}=\{0\}.$$ 
Let $0 \leq k_0 \leq 2n-1$ be the smallest integer satisfying (\ref{e2}).
Then, there exists a positive constant $C>0$ such that 
$$\forall u \in D(q^w), \quad \|\Lambda_0u\|_{L^2(\rr^n)}+\sum_{k=1}^{k_0}\|\Lambda_k^{\frac{2}{2k+1}}u\|_{L^2(\rr^n)} \leq C(\|q^w(x,D_x)u\|_{L^2(\rr^n)}+\|u\|_{L^2(\rr^n)}),$$
where the operators $\Lambda_k^{\frac{2}{2k+1}}$ are defined in (\ref{tio1}).
\end{theorem}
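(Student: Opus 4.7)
The plan is to deduce Theorem~\ref{th-1} from Theorem~\ref{th} in the spirit of Corollary~\ref{th0}, refining the argument so as to exploit the graded structure $V_0 \subsetneq V_1 \subsetneq \cdots \subsetneq V_{k_0} = \rr^{2n}$ of the singular space.

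The term $\|\Lambda_0 u\|_{L^2}$ is handled directly by accretivity, independently of Theorem~\ref{th}: since $\Lambda_0^2 = 1 + (\textrm{Re }q)^w$, one has $\|\Lambda_0 u\|_{L^2}^2 = \|u\|_{L^2}^2 + \textrm{Re}(q^w u, u)_{L^2} \leq \|u\|_{L^2}^2 + \|q^w u\|_{L^2}\|u\|_{L^2}$, which gives the $k = 0$ contribution at once.

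For $1 \leq k \leq k_0$, I would first represent the non-negative quadratic form $r_k$ as a sum of squares of linear forms $r_k(X) = \sum_{j=0}^{k}\sum_{l}\langle Y_{j,l}, X\rangle^2$, the $j$-th block arising from expanding $\textrm{Re }q\circ (\textrm{Im }F)^j$. A preparatory lemma verifies that each $Y_{j,l}$ belongs to $V_j$, i.e.\ has index at most $j$: indeed $(\textrm{Im }F)^j$ sends $V_j^\perp$ into $V_0^\perp = \textrm{Ker}(\textrm{Re }F)\cap\rr^{2n}$ on which $\textrm{Re }q$ vanishes, and dualizing gives $Y_{j,l} \in V_j$. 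Applying Theorem~\ref{th} componentwise with index bound $j \leq k$ and summing, one obtains
$$\|\Lambda_k e^{-tq^w}v\|_{L^2} \leq C t^{-(2k+1)/2}\|v\|_{L^2},\qquad 0 < t \leq 1.$$
The Heinz--Kato interpolation inequality for positive selfadjoint operators then yields
$$\|\Lambda_k^{2/(2k+1)}e^{-tq^w}v\|_{L^2} \leq \|\Lambda_k e^{-tq^w}v\|_{L^2}^{2/(2k+1)}\|e^{-tq^w}v\|_{L^2}^{(2k-1)/(2k+1)} \leq C t^{-1}\|v\|_{L^2}.$$

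The natural move is to close the estimate using the Duhamel identity $u = e^{-Tq^w}u + \int_0^T e^{-sq^w}q^w u\, ds$ after applying $\Lambda_k^{2/(2k+1)}$. The hard part is exactly here: the rate $t^{-1}$ is borderline non-integrable at $s = 0$, and it is sharp from the simple interpolation, so the integral $\int_0^T \|\Lambda_k^{2/(2k+1)} e^{-sq^w} q^w u\|_{L^2}\,ds$ cannot be bounded naively. To get past this barrier I would argue by induction on $k$, using the exact identity $\Lambda_k^2 - \Lambda_{k-1}^2 = (\textrm{Re }q \circ (\textrm{Im }F)^k)^w$ combined with the fact that for quadratic symbols the Moyal expansion truncates so that $[p^w,q^w] = i^{-1}\{p,q\}^w$ is exact. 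These express $(\textrm{Re }q\circ(\textrm{Im }F)^k)^w$, up to constants and lower-order quadratic corrections, as iterated $\textrm{ad}(\textrm{Im }q)^w$ applied to $(\textrm{Re }q)^w$. One then estimates the quadratic form $((\textrm{Re }q\circ(\textrm{Im }F)^k)^w u, u)$ by repeated integration by parts against $q^w u$ and the inductive quantities $\|\Lambda_j^{2/(2j+1)} u\|_{L^2}$ for $j < k$, and combines with the moment inequality $\|\Lambda_k^{2/(2k+1)} u\|_{L^2}^{2k+1} \leq \|\Lambda_k u\|_{L^2}^{2}\|u\|_{L^2}^{2k-1}$ to close the induction.
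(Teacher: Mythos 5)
Your treatment of $\Lambda_0$ is exactly the paper's (accretivity plus Cauchy--Schwarz), and your route to the smoothing estimate $\|\Lambda_k e^{-tq^w}v\|_{L^2}\leq Ct^{-(2k+1)/2}\|v\|_{L^2}$ for $0<t\leq 1$ is correct and a legitimate variant: you decompose $r_k$ into squares of linear forms whose directions lie in $V_j$, $j\leq k$, and apply Theorem~\ref{th} componentwise (using that $\ell^2=\ell\,\sharp^w\,\ell$ for a linear form), whereas the paper gets the same bound directly from the Lyapunov functional $G(t)$ of the proof of Theorem~\ref{th}, via the lower bound $Q_t\geq c_1t^{2k+1}r_k$ from Lemma~\ref{lem1} together with Lemma~\ref{lem4} and the bound (\ref{e47}). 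Up to that point both arguments are sound.

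The gap is in the passage from the semigroup smoothing estimate to the stationary estimate for $u\in D(q^w)$. You correctly observe that Heinz--Kato interpolation only gives the borderline rate $t^{-1}$, so the Duhamel integral diverges; but your proposed repair --- an induction on $k$ through exact commutator identities for quadratic symbols, ``repeated integration by parts'' against $q^wu$, and the moment inequality $\|\Lambda_k^{2/(2k+1)}u\|^{2k+1}\leq\|\Lambda_ku\|^{2}\|u\|^{2k-1}$ --- does not close. The moment inequality requires control of the full-strength quantity $\|\Lambda_ku\|_{L^2}$ for $u\in D(q^w)$, which is precisely what is not available (and is false in general for $k\geq1$: the loss of derivatives encoded by the exponent $2/(2k+1)$ is the whole point of the theorem), and no concrete mechanism is offered for how the iterated-bracket scheme produces that fractional exponent; carried out in earnest, this route is essentially the intricate multiplier construction of~\cite{karel}, which the paper is deliberately avoiding. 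The missing ingredient is the abstract real-interpolation result quoted as Proposition~\ref{lprop} (Lunardi, Corollary~5.13), already used for Corollary~\ref{th0}: since $q^w$ is maximal accretive and $\|e^{-tq^w}\|_{\mathcal{L}(L^2,\mathscr{H}_k)}\leq C_kt^{-\rho}$ with $\rho=(2k+1)/2>1$ (the large-time bound coming from the exponential decay (\ref{ev1})), one gets the continuous inclusion $D(q^w)\subset(L^2,\mathscr{H}_k)_{2/(2k+1),2}$, and then identifies $(L^2,\mathscr{H}_k)_{\theta,2}=[L^2,\mathscr{H}_k]_{\theta}=D(\Lambda_k^{\theta})$ with $\theta=2/(2k+1)$ by real-equals-complex interpolation for Hilbert spaces and the functional calculus of the positive selfadjoint operator $\Lambda_k^2$. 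This bypasses the non-integrable $t^{-1}$ barrier entirely, because membership in the real interpolation space is an integral (square-mean in $dt/t$) condition rather than the outcome of a pointwise Duhamel bound.
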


\medskip

The result of Theorem~\ref{th-1} shows that the subelliptic estimates can be refined in the phase space directions which are less degenerate, that is, in the directions where the indices with respect to the singular space are stricly lower than $k_0$. The authors expect the powers $2/(2k+1)$ for the operators $\Lambda_k$, when $1 \leq k \leq k_0$, to be sharp. However, the power~$1$ for the operator $\Lambda_0$ is probably not optimal. The optimal power for the operator $\Lambda_0$ is expected to be equal to~$2$.

\subsubsection{Results for quadratic operators with symplectic singular spaces} 

We consider a quadratic operator $q^w(x,D_x)$ whose Weyl symbol has a non-negative real part $\textrm{Re }q \geq 0$ and whose singular space 
$$S=\Big(\bigcap_{j=0}^{2n-1}\textrm{Ker}
\big[\textrm{Re }F(\textrm{Im }F)^j \big]\Big)\cap \rr^{2n},$$
has a symplectic structure, that is, when the restriction of the symplectic form $\sigma|_{S}$ to the singular space is non-degenerate. This symplectic structure property holds in particular when the quadratic symbol $q$ is elliptic along its singular space~\cite{HPS} (Section~1.4.1),
$$(x,\xi) \in S, \quad q(x,\xi)=0 \Longrightarrow (x,\xi)=0.$$ 
When the singular space $S$ has a symplectic structure, it allows to decompose the phase space into the direct sum of the two symplectically orthogonal spaces
$$\rr^{2n}=S\oplus^{\sigma \perp} S^{\sigma \perp},$$
where $S^{\sigma \perp}$ stands for the orthogonal complement of the singular space in $\rr^{2n}$ with respect to the symplectic form 
$$S^{\sigma \perp}=\{X \in \rr^{2n} : \ \forall Y \in S, \ \sigma(X,Y)=0\}.$$
In this case, the vector space $S^{\sigma \perp}$ also enjoys a symplectic structure, and both vector subspaces $S$ and $S^{\sigma \perp}$ are stable by the real and imaginary parts $\textrm{Re }F$ and $\textrm{Im }F$ of the Hamilton map $F$.
Under these assumptions, it was shown in~\cite{HPS} (Theorem~1.2.1) that 
 for all $t>0$, $N \in \mathbb{N}$, $u \in L^2(\rr^n)$,
\begin{equation}\label{sm4bis}
(1+|x'|^2+|D_{x'}|^2)^Ne^{-t q^w}u \in L^2(\rr^n), 
\end{equation}
if $(x',\xi')$ are some linear symplectic coordinates on the symplectic space $S^{\sigma \perp}$.
It was also proved in~\cite{karel} (Theorem~1.2.2) that the quadratic operator $q^w(x,D_x)$ is subelliptic in any direction of the space $S^{\sigma \perp}$ in the sense that: $\exists C>0$, $\forall u \in D(q^w)$,
\begin{equation}\label{risten5}
\|\langle (x',D_{x'})\rangle^{2/(2k_0+1)} u\|_{L^2(\rr^n)} \leq C(\|q^w(x,D_x)u\|_{L^2(\rr^n)}+\|u\|_{L^2(\rr^n)}),
\end{equation}
where $0 \leq k_0 \leq 2n-1$ is the smallest integer satisfying
\begin{equation}\label{h1biskrist2bis}
S=\Big(\bigcap_{j=0}^{k_0}\textrm{Ker}\big[\textrm{Re }F(\textrm{Im }F)^j \big]\Big) \cap \rr^{2n}.
\end{equation}
When the singular space is possibly non-zero but still has a symplectic structure, the result of Corollary~\ref{thj} can be extended as follows
when differentiating the semigroup in the directions of the phase space given by 
the orthogonal complement of the singular space~$S^{\sigma \perp}$:

\medskip

\begin{corollary}\label{thj_symp}
Let $q : \rr^{n}_x \times \rr_{\xi}^n \rightarrow \cc$, with $n \geq 1$, be a quadratic form with a non-negative real part $\emph{\textrm{Re }}q \geq 0$ and a singular space with a symplectic structure. Let $0 \leq k_0 \leq 2n-1$ be the smallest integer satisfying
$$S=\Big(\bigcap_{j=0}^{k_0}\emph{\textrm{Ker}}
\big[\emph{\textrm{Re }}F(\emph{\textrm{Im }}F)^j \big]\Big)\cap \rr^{2n},$$ 
$(\tilde{e}_1,...,\tilde{e}_{n'},\tilde{\eps}_1,...,\tilde{\eps}_{n'})$ a symplectic basis of $S^{\sigma \perp}$, $(\tilde{e}_{n'+1},...,\tilde{e}_{n'+n''},\tilde{\eps}_{n'+1},...,\tilde{\eps}_{n'+n''})$ a symplectic basis of $S$ with $n=n'+n''$, and $\chi : \rr^{2n} \rightarrow \rr^{2n}$ the linear symplectic transformation satisfying 
$\chi(\tilde{e}_j)=e_j$, $\chi(\tilde{\eps}_j)=\eps_j$, $1 \leq j \leq n$, with $(e_1,...,e_n,\eps_1,...,\eps_n)$ the canonical symplectic basis of the phase space $\rr_x^n \times \rr_{\xi}^n$.
Then, there exists a positive constant $C>1$ such that for all $m \geq 1$, $X_1 \in S^{\sigma \perp}$, ..., $X_m \in S^{\sigma \perp}$, $u \in L^2(\rr^n)$,
\begin{multline*}
\forall 0<t \leq 1, \quad \big\|\langle \chi(X_1),\chi^w(x,D_x) \rangle\ ...\ \langle \chi(X_m),\chi^w(x,D_x) \rangle e^{-tq^w}u\big\|_{L^2(\rr^n)}\\
\leq C^{m}(m!)^{\frac{2k_0+1}{2}}\Big(\prod_{j=1}^m|X_j|\Big)t^{-\frac{(2k_0+1)m}{2}}\|u\|_{L^2(\rr^n)},
\end{multline*}
\begin{multline*}
\forall t \geq 1, \quad \big\|\langle \chi(X_1),\chi^w(x,D_x) \rangle\ ...\ \langle \chi(X_m),\chi^w(x,D_x) \rangle e^{-tq^w}u\big\|_{L^2(\rr^n)}\\
\leq C^m(m!)^{\frac{2k_0+1}{2}}\Big(\prod_{j=1}^m|X_j|\Big)e^{-\omega_0t}\|u\|_{L^2(\rr^n)},
\end{multline*}
with 
$$\omega_0=\sum_{\substack{\lambda \in \sigma(F|_{S^{\sigma \perp}}) \\
-i \lambda \in \cc_+}}r_{\lambda}\emph{\textrm{Re}}(-i\lambda)>0, \qquad \cc_+=\{z \in \cc : \emph{\textrm{Re }}z>0\},$$
where $F|_{S^{\sigma \perp}}$ denotes the restriction of the Hamilton map $F$ of $q$ to the vector subspace $S^{\sigma \perp}$, $r_{\lambda}$ stands for the dimension of the space of generalized eigenvectors of $F|_{S^{\sigma \perp}}$ associated to the eigenvalue $\lambda \in \sigma(F|_{S^{\sigma \perp}})$, and where $|\cdot|$ denotes the Euclidean norm on~$\rr^{2n}$.
\end{corollary}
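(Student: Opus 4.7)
The plan is to reduce Corollary~\ref{thj_symp} to the zero-singular-space case of Corollary~\ref{thj} via the metaplectic representation of $\chi$. I choose a unitary $U_{\chi}$ on $L^2(\rr^n)$ associated with $\chi$ so that the Egorov relation gives
$$U_{\chi}\,q^w(x,D_x)\,U_{\chi}^{-1}=(q\circ\chi^{-1})^w(x,D_x),$$
and the same formula applied to the linear symbols $(\chi(x,\xi))_i$ yields the intertwining $U_{\chi}\langle\chi(X_j),\chi^w(x,D_x)\rangle U_{\chi}^{-1}=\langle\chi(X_j),(x,D_x)\rangle$. Setting $v=U_{\chi}u$, the quantity I must bound becomes $\big\|\prod_j\langle\chi(X_j),(x,D_x)\rangle\,e^{-t(q\circ\chi^{-1})^w}v\big\|_{L^2(\rr^n)}$ with $\|v\|_{L^2}=\|u\|_{L^2}$.

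The next step is to exploit the block-diagonal structure of $\chi F\chi^{-1}$. Since $S$ and $S^{\sigma\perp}$ are both $F$-invariant and symplectically orthogonal, and since the canonical symplectic form on $\rr^{2n}$ also decouples along the canonical splitting $\rr^{2n}=\rr^{2n'}\times\rr^{2n''}$ (which is the image under $\chi$ of $S^{\sigma\perp}\oplus^{\sigma\perp}S$), the symbol splits:
$$q\circ\chi^{-1}(X',X'')=q_1(X')+q_2(X''),$$
with $q_1$ having Hamilton map conjugate to $F|_{S^{\sigma\perp}}$. Every $X\in S$ lies in $\ker\textrm{Re}\,F$, hence $\textrm{Re}\,q|_S=0$ and $\textrm{Re}\,q_2\equiv 0$, so that $q_2^w(x'',D_{x''})$ is skew-adjoint and $(e^{-tq_2^w})_{t\in\rr}$ is unitary on $L^2(\rr^{n''})$. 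The singular space of $q_1$ equals $\chi(S\cap S^{\sigma\perp})=\{0\}$ and its structural parameter is at most $k_0$. The Weyl calculus respects the splitting, and the commuting summands give the tensor factorization $e^{-t(q\circ\chi^{-1})^w}=e^{-tq_1^w(x',D_{x'})}\otimes e^{-tq_2^w(x'',D_{x''})}$.

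For $X_j\in S^{\sigma\perp}$, the image $\chi(X_j)$ lies in $\rr^{2n'}\times\{0\}$, so each operator $\langle\chi(X_j),(x,D_x)\rangle$ involves only $x'$ and $D_{x'}$ and commutes with $e^{-tq_2^w(x'',D_{x''})}$. Via Fubini on $L^2(\rr^n)\cong L^2(\rr^{n''};L^2(\rr^{n'}))$, Corollary~\ref{thj} applied to $q_1$ pointwise in $x''$ yields, for $0<t\leq 1$,
$$\Big\|\prod_{j=1}^m\langle\chi(X_j),(x,D_x)\rangle\,e^{-tq_1^w}\otimes e^{-tq_2^w}\,v\Big\|_{L^2(\rr^n)}\leq C^m(m!)^{\frac{2k_0+1}{2}}\prod_{j=1}^m|\chi(X_j)|\,t^{-\frac{(2k_0+1)m}{2}}\|v\|_{L^2(\rr^n)},$$
where unitarity of $e^{-tq_2^w}$ has been used to absorb it. The estimate $|\chi(X_j)|\leq\|\chi\|\,|X_j|$ and absorbing $\|\chi\|^m$ into the constant then delivers the claimed small-time bound. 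The large-time bound is obtained in the same way from the large-time part of Corollary~\ref{thj} for $q_1$, whose spectral abscissa is, by~\eqref{jkk1}, precisely the sum over $\sigma(F|_{S^{\sigma\perp}})$ featured in the statement.

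The main obstacle will be to verify cleanly that the symbol $q\circ\chi^{-1}$ and the canonical symplectic form both decouple along $\rr^{2n'}\times\rr^{2n''}$ (which relies essentially on the joint $F$-invariance of $S$ and $S^{\sigma\perp}$ combined with the symplectic orthogonality), and to confirm that the structural parameter of $q_1$ is dominated by $k_0$; once these are in hand, the metaplectic conjugation and Fubini reduce everything to Corollary~\ref{thj} already proved.
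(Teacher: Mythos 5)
Your proposal is correct and follows essentially the same route as the paper: a metaplectic conjugation associated with $\chi$ straightens the splitting $\rr^{2n}=S^{\sigma\perp}\oplus^{\sigma\perp}S$, the symbol decouples as $q\circ\chi^{-1}=q_1(X')+q_2(X'')$ with $\textrm{Re }q_2\equiv 0$ (so that factor of the semigroup is unitary) and $q_1$ having zero singular space with structural parameter at most $k_0$, and Corollary~\ref{thj} applied to $q_1$ then yields the estimates after conjugating back and absorbing $\|\chi\|^m$ into the constant. The points you flag as remaining to verify (decoupling via joint $F$-invariance and symplectic orthogonality, and the bound on the structural parameter of $q_1$ through $S\cap S^{\sigma\perp}=\{0\}$) are exactly the checks carried out in the paper's proof, and your sketched justifications are the correct ones.
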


\medskip
 
In the case when the singular space is zero $S=\{0\}$, the result of Corollary~\ref{thj_symp} exactly reduces to the result of Corollary~\ref{thj}. On the other hand, we observe that in the case when the singular space is equal to the whole phase space $S=\rr^{2n}$, the result of Corollary~\ref{thj_symp} is empty as the sole direction in $S^{\sigma \perp}$ is zero.
We close this paragraph by mentioning that the result of Theorem~\ref{th} can as well be extended to the case when the singular space is possibly non-zero but still has a symplectic structure. This follows from the very same arguments as the ones given in the proof of Corollary~\ref{thj_symp} thanks to the tensorization of the variables (\ref{evg23}) and the results already known in the case when the singular space is zero. Details are left to the interested reader.

 \subsubsection{Outline of the article}
 
The article is organized as follows. The proofs of the main results are given in Section~\ref{qz1}, whereas Section~\ref{applications} is devoted to provide some applications of these results to the study of degenerate hypoelliptic Ornstein-Uhlenbeck operators and  degenerate hypoelliptic Fokker-Planck operators. We show in particular in Section~\ref{applications}  how our results allow to recover some properties of degenerate hypoelliptic Ornstein-Uhlenbeck operators proven by Farkas and Lunardi in the work~\cite{lunardi1}, and how they relate to the recent results of Arnold and Erb~\cite{anton} on degenerate hypoelliptic Fokker-Planck operators with linear drift. The last section (Section~\ref{appendix}) is devoted to an appendix on the Gelfand-Shilov regularity.

\section{Proofs of the main results}\label{qz1}

\subsection{Proof of Theorem~\ref{th}}
This section is devoted to the proof of Theorem~\ref{th}. 
The idea of the proof is to construct a decaying-in-time functional following some techniques developed first by H\'erau~\cite{herau_JFA} in his work on the short and long time behavior of the Fokker-Planck equation in a confining potential, and more recently by Arnold and Erb~\cite{anton} for studying the entropy decay for hypocoercive and non-symmetric Fokker-Planck equations with linear drift. The proof of Theorem~\ref{th} draws its inspiration from the construction of the decaying-in-time functional made in~\cite{anton} (Theorem~4.8).

We begin by observing that condition (\ref{e2}) is equivalent to the positive definiteness of the following associated quadratic form 
\begin{equation}\label{e3}
\exists c_0>0, \forall X=(x,\xi) \in \rr^{2n}, \quad \sum_{j=0}^{k_0}\textrm{Re }q\big((\textrm{Im }F)^jX\big) \geq c_0 |X|^2.
\end{equation}
Indeed, if 
$$\sum_{j=0}^{k_0}\textrm{Re }q\big((\textrm{Im }F)^jX_0\big)=0, \quad X_0 \in \rr^{2n},$$
the non-negativity of the quadratic form $\textrm{Re } q \geq 0$ implies that 
\begin{equation}\label{2.3.99}
\forall j=0,...,k_0, \quad \textrm{Re }q\big((\textrm{Im } F)^j X_0\big)=0. 
\end{equation}
Letting $\textrm{Re }q(\cdot,\cdot)$ denote the polarized form associated to the quadratic form $\textrm{Re }q \geq 0$, we deduce from the Cauchy-Schwarz inequality and (\ref{2.3.99}) that for all $j=0,...,k_0$, $Y \in \rr^{2n}$, 
$$\big|\textrm{Re }q\big(Y,(\textrm{Im }F)^j X_0\big)\big|^2= \big|\sigma\big(Y,\textrm{Re }F (\textrm{Im }F)^j X_0\big)\big|^2
 \leq \textrm{Re }q(Y)\textrm{Re }q\big((\textrm{Im } F)^j X_0\big)=0.$$
It follows that for all $j=0,...,k_0$, $Y \in \rr^{2n}$, $\sigma\big(Y,\textrm{Re }F (\textrm{Im }F)^j X_0\big)=0$.
We deduce that for all $j=0,...,k_0$, $\textrm{Re }F(\textrm{Im }F)^j X_0=0$, because the symplectic form is non-degenerate. 
Conversely, we observe that the non-negative quadratic form 
$$\sum_{j=0}^{k_0}\textrm{Re }q\big((\textrm{Im }F)^jX_0\big)=\sum_{j=0}^{k_0}\sigma\big((\textrm{Im }F)^jX_0,\textrm{Re }F(\textrm{Im }F)^jX_0\big)=0,$$
is zero, when $\textrm{Re }F(\textrm{Im }F)^j X_0=0$, for all $j=0,...,k_0$. 
Condition (\ref{e2}) is therefore equivalent to the fact that the non-negative quadratic form (\ref{e3}) is positive definite.
We can thus find a positive constant $c_1>0$ such that 
\begin{equation}\label{e4}
\forall X \in \rr^{2n}, \quad 0 \leq \textrm{Re }q\big((\textrm{Im }F)^{k_0+2}X\big) \leq c_1\sum_{j=0}^{k_0}\textrm{Re }q\big((\textrm{Im }F)^jX\big).
\end{equation}
We consider the quadratic forms
\begin{equation}\label{e5}
\forall 0 \leq j \leq k_0+1, \quad M_j(X)=\textrm{Re }q\big((\textrm{Im }F)^{j}X\big) \geq 0,
\end{equation}
\begin{equation}\label{e6}
\forall 0 \leq j \leq k_0, \quad N_j(X)=2\textrm{Re }q\big((\textrm{Im }F)^{j}X,(\textrm{Im }F)^{j+1}X\big).
\end{equation}
We define
\begin{equation}\label{e7}
a_{k_0+1}=\frac{1}{c_1}>0, \quad b_{k_0}=\frac{2}{3}\big(1+(2k_0+4)a_{k_0+1}\big)>0, \quad a_{k_0}=\frac{2b_{k_0}^2}{a_{k_0+1}}>0.
\end{equation}
Then, we can define iteratively starting from the index $j=k_0$ and finishing with $j=1$ the following parameters
\begin{equation}\label{e8}
b_{j-1}=\frac{2}{3}\Big(2+c_1+(2j+1)a_{j}+b_j^2+\frac{2\big(a_j-(2j+2)b_j\big)^2}{b_j}\Big)>0, \quad a_{j-1}=\frac{8b_{j-1}^2}{a_{j}}>0,
\end{equation}
for all $1 \leq j \leq k_0$. We consider the time-dependent quadratic form
\begin{equation}\label{e9}
Q_t(X)=\sum_{j=0}^{k_0+1}a_jt^{2j+1}M_j(X)-\sum_{j=0}^{k_0}b_jt^{2j+2}N_j(X), \quad t \geq 0,
\end{equation}
where the quadratic forms $M_j$ and $N_j$ are defined in (\ref{e5}) and (\ref{e6}). The following lemma shows that this quadratic form is positive definite for all $t>0$:

\medskip

\begin{lemma}\label{lem1}
We have for all $t \geq 0$, $X \in \rr^{2n}$, 
$$Q_t(X) \geq \frac{1}{4}\sum_{j=0}^{k_0}a_jt^{2j+1}M_j(X) \geq \frac{c_0}{4}(\inf_{0 \leq j \leq k_0}a_j)\big(\inf(1,t)\big)^{2k_0+1}|X|^2.$$
\end{lemma}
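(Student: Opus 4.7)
The plan is to absorb the indefinite cross-terms $-b_j t^{2j+2} N_j(X)$ in $Q_t(X)$ into the diagonal non-negative terms $a_j t^{2j+1} M_j(X)$, using Cauchy--Schwarz for the polarized form of $\textrm{Re }q$ together with Young's inequality, and then to exploit the recursive definitions (\ref{e7})--(\ref{e8}) of the coefficients $a_j, b_j$ to close the estimate. The second inequality is then a direct consequence of the positive definiteness (\ref{e3}).

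More precisely, since $\textrm{Re }q \geq 0$, its associated polarized form obeys the Cauchy--Schwarz inequality, so
$$|N_j(X)| = 2\bigl|\textrm{Re }q\bigl((\textrm{Im }F)^j X,(\textrm{Im }F)^{j+1}X\bigr)\bigr| \leq 2\sqrt{M_j(X)M_{j+1}(X)}.$$
Applying $2\sqrt{uv} \leq \alpha u + \alpha^{-1} v$ with $u = t^{2j+1}M_j(X)$, $v = t^{2j+3}M_{j+1}(X)$ and a weight $\alpha_j>0$ to be chosen, I would get
$$b_j t^{2j+2}|N_j(X)| \leq \alpha_j b_j\, t^{2j+1}M_j(X) + \frac{b_j}{\alpha_j}\, t^{2j+3}M_{j+1}(X).$$
Summing for $0 \leq j \leq k_0$ and regrouping by level, the first inequality $Q_t(X) \geq \frac{1}{4}\sum_{j=0}^{k_0} a_j t^{2j+1}M_j(X)$ reduces to the three conditions on the weights
$$\frac{3}{4}a_0 \geq \alpha_0 b_0, \qquad \frac{3}{4}a_j \geq \alpha_j b_j + \frac{b_{j-1}}{\alpha_{j-1}}\ \ (1 \leq j \leq k_0), \qquad a_{k_0+1} \geq \frac{b_{k_0}}{\alpha_{k_0}}.$$

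I would choose the weights from the top down. Setting $\alpha_{k_0} := b_{k_0}/a_{k_0+1}$ saturates the top constraint exactly, and via the relation $a_{k_0} = 2b_{k_0}^2/a_{k_0+1}$ in (\ref{e7}) it contributes $\alpha_{k_0} b_{k_0} = a_{k_0}/2$ at level $k_0$, leaving slack $a_{k_0}/4$ to absorb the $N_{k_0-1}$ contribution. Iterating with $\alpha_{j-1} := 4 b_{j-1}/a_j$ for $1 \leq j \leq k_0$, the recursion $a_{j-1} = 8 b_{j-1}^2/a_j$ in (\ref{e8}) gives $\alpha_{j-1} b_{j-1} = a_{j-1}/2$, so the middle condition at level $j-1$ is satisfied with slack $a_{j-1}/4$. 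At the bottom level, $\alpha_0 b_0 = a_0/2 \leq \frac{3}{4}a_0$.

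For the second inequality, the positive definiteness (\ref{e3}) reads $\sum_{j=0}^{k_0} M_j(X) \geq c_0|X|^2$, and since $t^{2j+1} \geq (\inf(1,t))^{2k_0+1}$ for all $t \geq 0$ and $0 \leq j \leq k_0$, one obtains
$$\frac{1}{4}\sum_{j=0}^{k_0} a_j t^{2j+1} M_j(X) \geq \frac{1}{4}\bigl(\inf_{0 \leq j \leq k_0} a_j\bigr)\bigl(\inf(1,t)\bigr)^{2k_0+1}\sum_{j=0}^{k_0} M_j(X) \geq \frac{c_0}{4}\bigl(\inf_{0 \leq j \leq k_0} a_j\bigr)\bigl(\inf(1,t)\bigr)^{2k_0+1}|X|^2.$$

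The main obstacle is the bookkeeping in the first step: tracking the coefficient of each $M_j(X)$ through the Cauchy--Schwarz/Young redistribution and checking that the specific numerical constants in (\ref{e7})--(\ref{e8}) leave enough room. This is purely mechanical, and the seemingly overloaded form of the recursion (in particular the terms $(2j+1)a_j$, $b_j^2$, and $(a_j - (2j+2)b_j)^2/b_j$ in the definition of $b_{j-1}$) already anticipates the additional constraints that will be needed later when differentiating $Q_t$ along the evolution — slack that is not required for the purely static lemma above.
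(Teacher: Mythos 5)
Your argument is correct and is essentially the paper's own proof: the paper also absorbs the cross terms via the Cauchy--Schwarz/Young inequality with the choice $\eps=\tfrac{2b_jt}{a_j}$, which is exactly your weight $\alpha_j=\tfrac{a_j}{2b_j}$ (so $\alpha_{k_0}=b_{k_0}/a_{k_0+1}$ and $\alpha_{j-1}=4b_{j-1}/a_j$), and then uses the identities $\tfrac{a_j}{2}+\tfrac{2b_{j-1}^2}{a_{j-1}}=\tfrac{3a_j}{4}$, $\tfrac{2b_{k_0}^2}{a_{k_0}}=a_{k_0+1}$ coming from (\ref{e7})--(\ref{e8}), followed by the positive definiteness (\ref{e3}) and $t^{2j+1}\geq(\inf(1,t))^{2k_0+1}$ exactly as you do. No gaps.
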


\medskip

\begin{proof}
According to (\ref{e9}), the above estimate trivially holds for $t=0$. We may therefore assume that $t>0$.
We deduce from (\ref{e5}), (\ref{e6}) and the Cauchy-Schwarz inequality that for all $\eps>0$, $X \in \rr^{2n}$, 
\begin{multline}\label{e10}
|N_j(X)| \leq 2\sqrt{\textrm{Re }q\big((\textrm{Im }F)^{j}X\big)}\sqrt{\textrm{Re }q\big((\textrm{Im }F)^{j+1}X\big)}\\
\leq \frac{\textrm{Re }q\big((\textrm{Im }F)^{j}X\big)}{\eps}+\eps\textrm{Re }q\big((\textrm{Im }F)^{j+1}X\big)=\frac{M_j(X)}{\eps}+\eps M_{j+1}(X),
\end{multline}
since the quadratic form $\textrm{Re }q \geq 0$ is non-negative. The very same arguments also show that for all $\eps>0$, $X \in \rr^{2n}$, 
\begin{multline}\label{e11}
\big|2\textrm{Re }q\big((\textrm{Im }F)^{j}X,(\textrm{Im }F)^{j+2}X\big)\big| \leq 2\sqrt{\textrm{Re }q\big((\textrm{Im }F)^{j}X\big)}\sqrt{\textrm{Re }q\big((\textrm{Im }F)^{j+2}X\big)}\\
\leq \frac{\textrm{Re }q\big((\textrm{Im }F)^{j}X\big)}{\eps}+\eps\textrm{Re }q\big((\textrm{Im }F)^{j+2}X\big)=\frac{M_j(X)}{\eps}+\eps M_{j+2}(X).
\end{multline}
With $\eps=\frac{2b_j t}{a_j}>0$, it follows from (\ref{e10}) that for all $t>0$, $X \in \rr^{2n}$,
\begin{multline}\label{e12}
-b_j t^{2j+2}N_j(X) \geq -b_j t^{2j+2}\Big(\frac{a_j}{2b_j t}M_j(X)+\frac{2b_j t}{a_j}M_{j+1}(X)\Big)\\ =-\frac{1}{2}a_j t^{2j+1}M_j(X)-\frac{2b_j^2}{a_j}t^{2j+3}M_{j+1}(X).
\end{multline}
We deduce from (\ref{e12}) that for all $t>0$, $X \in \rr^{2n}$,
\begin{multline}\label{e13}
-\sum_{j=0}^{k_0}b_j t^{2j+2}N_j(X) \geq -\frac{1}{2}a_0 t M_0(X)-\sum_{j=1}^{k_0}\Big(\frac{a_j}{2}+\frac{2b_{j-1}^2}{a_{j-1}}\Big)t^{2j+1}M_j(X)\\
-\frac{2b_{k_0}^2}{a_{k_0}}t^{2k_0+3}M_{k_0+1}(X).
\end{multline}
It follows from (\ref{e7}) and (\ref{e8}) that 
\begin{equation}\label{e14}
\forall 1 \leq j \leq k_0, \quad \frac{a_j}{2}+\frac{2b_{j-1}^2}{a_{j-1}}=\frac{3a_j}{4}, \qquad   \frac{2b_{k_0}^2}{a_{k_0}}=a_{k_0+1}.
\end{equation}
We obtain from (\ref{e13}) and (\ref{e14}) that for all $t>0$, $X \in \rr^{2n}$,
\begin{multline}\label{e15}
-\sum_{j=0}^{k_0}b_j t^{2j+2}N_j(X) \geq -\frac{1}{2}a_0 t M_0(X)-\frac{3}{4}\sum_{j=1}^{k_0}a_jt^{2j+1}M_j(X)\\
-a_{k_0+1} t^{2k_0+3}M_{k_0+1}(X).
\end{multline}
It follows from (\ref{e9}) and (\ref{e15}) that for all $t>0$, $X \in \rr^{2n}$,
\begin{equation}\label{e16}
Q_t(X) \geq \frac{1}{2}a_0 t M_0(X)+\frac{1}{4}\sum_{j=1}^{k_0}a_j t^{2j+1}M_j(X)\geq \frac{1}{4}\sum_{j=0}^{k_0}a_j t^{2j+1}M_j(X).
\end{equation}
On the other hand, we deduce from (\ref{e3}), (\ref{e5}) and (\ref{e16}) that for all $t>0$, $X \in \rr^{2n}$,
\begin{align}\label{e17}
Q_t(X) \geq & \  \frac{1}{4}(\inf_{0 \leq j \leq k_0}a_j)\big(\inf(1,t)\big)^{2k_0+1}\sum_{j=0}^{k_0}M_j(X) \\ \notag
\geq & \  \frac{c_0}{4}(\inf_{0 \leq j \leq k_0}a_j)\big(\inf(1,t)\big)^{2k_0+1}|X|^2.
\end{align}
This ends the proof of Lemma~\ref{lem1}.
\end{proof}

This time-dependent quadratic form is used to obtain key upper bounds:

\medskip

\begin{lemma}\label{lem1f}
There exists a positive constant $c>0$ such that for all $t \geq 0$, $X=(x,\xi) \in \rr^{2n}$, $X_0=(x_0,\xi_0) \in \rr^{2n}$, $X_0 \neq 0$, 
$$Q_t(X) \geq \frac{c}{|X_0|^2}\big(\inf(1,t)\big)^{2k_{X_0}+1}(\langle x_0,x\rangle+\langle \xi_0,\xi\rangle)^2,$$
where $0 \leq k_{X_0} \leq k_0$ stands for the index of the point $X_0 \in \rr^{2n}$ with respect to the singular space, with $|\cdot|$ the Euclidean norm on $\rr^{2n}$.
\end{lemma}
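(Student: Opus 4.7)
The strategy is to combine Lemma~\ref{lem1} with a finite-dimensional positivity argument tied to the filtration $(V_k)_{0 \leq k \leq k_0}$ of~\eqref{byebye1}. Writing
$$R_k(X) := \sum_{j=0}^{k} M_j(X) = \sum_{j=0}^{k}\textrm{Re }q\bigl((\textrm{Im }F)^j X\bigr) \geq 0,$$
Lemma~\ref{lem1} already yields $Q_t(X) \geq \tfrac{1}{4}\bigl(\min_{0 \leq j \leq k_0} a_j\bigr)\sum_{j=0}^{k_{X_0}} t^{2j+1}M_j(X)$, where I have simply dropped the non-negative terms with $j > k_{X_0}$. For $0 \leq j \leq k_{X_0}$ and $t \geq 0$, a case split on $t \leq 1$ versus $t \geq 1$ gives the elementary bound $t^{2j+1} \geq (\inf(1,t))^{2k_{X_0}+1}$. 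This reduces Lemma~\ref{lem1f} to producing a constant $c>0$, depending only on $q$, such that for all $X \in \rr^{2n}$ and all $X_0 \in \rr^{2n}\setminus\{0\}$,
$$R_{k_{X_0}}(X) \geq \frac{c}{|X_0|^2}\bigl(\langle x_0,x\rangle + \langle \xi_0, \xi\rangle\bigr)^2.$$

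\textbf{Key algebraic step.} The first thing I would do is identify the kernel of $R_k$, for each $0 \leq k \leq k_0$, with
$$W_k := \Bigl(\bigcap_{j=0}^{k} \textrm{Ker}\bigl[\textrm{Re }F(\textrm{Im }F)^j\bigr]\Bigr) \cap \rr^{2n}.$$
One inclusion is immediate from $M_j(X) = \sigma\bigl((\textrm{Im }F)^j X,\textrm{Re }F(\textrm{Im }F)^j X\bigr)$. For the other, the Cauchy--Schwarz argument already used in the proof of the equivalence between~\eqref{e2} and~\eqref{e3} shows that $R_k(Y) = 0$ forces $M_j(Y)=0$ for every $j \leq k$ and hence $\textrm{Re }F(\textrm{Im }F)^j Y = 0$ for $j \leq k$. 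The same Cauchy--Schwarz inequality, applied to the polarization $B_k$ of the non-negative form $R_k$, shows that $B_k$ vanishes on $W_k \times \rr^{2n}$. Writing any $X \in \rr^{2n}$ as $X = \pi_k X + (X - \pi_k X)$ with $\pi_k$ the Euclidean orthogonal projection onto $V_k = W_k^{\perp}$, this degeneracy yields $R_k(X) = R_k(\pi_k X)$, and the restriction of $R_k$ to the finite-dimensional space $V_k$ is positive definite, so there exists $c_k > 0$ with $R_k(Y) \geq c_k|Y|^2$ for every $Y \in V_k$.

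\textbf{Conclusion and main obstacle.} Since $X_0 \in V_{k_{X_0}}$ by the very definition of the index, Cauchy--Schwarz in $\rr^{2n}$ combined with the previous step gives
$$\bigl(\langle X_0, X\rangle\bigr)^2 = \bigl(\langle X_0, \pi_{k_{X_0}} X\rangle\bigr)^2 \leq |X_0|^2\,|\pi_{k_{X_0}} X|^2 \leq \frac{|X_0|^2}{c_{k_{X_0}}}\,R_{k_{X_0}}(\pi_{k_{X_0}}X) = \frac{|X_0|^2}{c_{k_{X_0}}}\,R_{k_{X_0}}(X).$$
Taking $c' := \min_{0 \leq k \leq k_0} c_k > 0$ produces a universal constant, and combining with the time bound of the first paragraph closes the proof with $c = \tfrac{c'}{4}\min_{0\leq j \leq k_0} a_j$. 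The only conceptual ingredient is the kernel identification of the second paragraph: it is what forces the linear form $\langle X_0,\cdot\rangle$ to factor through $V_{k_{X_0}}$, and it explains why the degeneracy degree $k_{X_0}$ is exactly the exponent that appears, since no smaller partial sum $R_k$ with $k < k_{X_0}$ controls $\langle X_0, \cdot\rangle^2$.
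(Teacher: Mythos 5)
Your proposal is correct and follows essentially the same route as the paper: both drop the terms $j>k_{X_0}$ in the lower bound of Lemma~\ref{lem1}, bound $t^{2j+1}$ below by $\big(\inf(1,t)\big)^{2k_{X_0}+1}$, identify the kernel of the partial sum $r_{k_{X_0}}=\sum_{j\le k_{X_0}}M_j$ with $V_{k_{X_0}}^{\perp}$ so that $r_{k_{X_0}}(X)=r_{k_{X_0}}(\pi_{k_{X_0}}X)$ and is positive definite on $V_{k_{X_0}}$, and then apply Cauchy--Schwarz using $X_0\in V_{k_{X_0}}$. The only cosmetic difference is that the paper phrases the degeneracy via the symmetric matrix $R_k$ (Ker $R_k=V_k^{\perp}$, Ran $R_k=V_k$) rather than via the Cauchy--Schwarz vanishing of the polarized form, which is the same argument.
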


\medskip

\begin{proof}
Let $0 \leq k \leq k_0$. The very same arguments as in (\ref{2.3.99}) show that the non-negative quadratic form
\begin{equation}\label{we3}
r_k(X)=\sum_{j=0}^{k}\textrm{Re }q\big((\textrm{Im }F)^jX\big) \geq 0,
\end{equation}
satisfies that
\begin{equation}\label{we4}
X \in \rr^{2n}, \ r_k(X)=0 \Longleftrightarrow X \in \Big(\bigcap_{j=0}^{k}\textrm{Ker}\big[\textrm{Re }F(\textrm{Im }F)^j \big]\Big)\cap \rr^{2n}.
\end{equation}
Let $R_k \in S_{2n}(\rr)$ be the symmetric matrix associated to the quadratic form $r_k$ for the Euclidean scalar product on $\rr^{2n}$, 
\begin{equation}\label{we5.1}
r_k(X)=\langle R_k X,X\rangle, \quad X \in \rr^{2n}.
\end{equation}
It follows from (\ref{we1}) and (\ref{we4}) that 
$$\textrm{Ker }R_k=V_k^{\perp}, \quad \textrm{Ran }R_k=V_k,$$
since $R_k \in S_{2n}(\rr)$. It therefore exists a positive constant $C>0$ such that 
\begin{equation}\label{we5}
\forall 0 \leq k \leq k_0, \forall X \in V_k, \quad r_k(X) \geq C|X|^2.
\end{equation}
Let $X_0=(x_0,\xi_0) \in \rr^{2n}$, $X_0 \neq 0$ and $0 \leq k_{X_0} \leq k_0$ be the index of the point $X_0 \in \rr^{2n}$ with respect to the singular space.
For all $X \in \rr^{2n}$, we decompose $X=X'+X''$ with  $X' \in V_{k_{X_0}}$ and $X'' \in V_{k_{X_0}}^{\perp}$.
Since $X_0=(x_0,\xi_0) \in V_{k_{X_0}}$, we deduce from (\ref{we5.1})  and (\ref{we5}) that
\begin{multline}\label{we6}
(\langle x_0,x\rangle+\langle \xi_0,\xi\rangle)^2=|\langle X_0,X\rangle|^2=|\langle X_0,X'\rangle|^2 \\
\leq |X_0|^2|X'|^2  \leq \frac{|X_0|^2}{C}r_{k_{X_0}}(X')=\frac{|X_0|^2}{C}r_{k_{X_0}}(X).
\end{multline}
On the other hand, it follows from Lemma~\ref{lem1}, (\ref{e5}), (\ref{we3}) and (\ref{we6}) that for all $t \geq 0$, $X=(x,\xi) \in \rr^{2n}$, $X_0=(x_0,\xi_0) \in \rr^{2n}$, $X_0 \neq 0$, 
\begin{multline*}
Q_t(X) \geq \frac{1}{4}\sum_{j=0}^{k_{X_0}}a_jt^{2j+1}M_j(X) \geq \frac{1}{4}\Big(\inf_{0 \leq j \leq k_0} a_j\Big)\big(\inf(1,t)\big)^{2k_{X_0}+1}r_{k_{X_0}}(X)\\
\geq \frac{C}{4|X_0|^2}\Big(\inf_{0 \leq j \leq k_0} a_j\Big)\big(\inf(1,t)\big)^{2k_{X_0}+1}(\langle x_0,x\rangle+\langle \xi_0,\xi\rangle)^2,
\end{multline*}
since $0 \leq k_{X_0} \leq k_0$.
This ends the proof of Lemma~\ref{lem1f}.
\end{proof}

We recall from~\cite{mz} (Lemma~2) the following instrumental lemma:

\medskip

\begin{lemma}\label{ll2}
If $q_1$ and $q_2$ are two complex-valued quadratic forms on $\rr^{2n}$, then the Hamilton map associated to the complex-valued quadratic form defined by the Poisson 
bracket 
$$\{q_1,q_2\}=\frac{\partial q_1}{\partial \xi}\cdot \frac{\partial q_2}{\partial x}-\frac{\partial q_1}{\partial x}\cdot \frac{\partial q_2}{\partial \xi},$$
is $-2[F_1,F_2]$, where $[F_1,F_2]$ denotes the commutator of $F_1$ and $F_2$ respectively the Hamilton maps of $q_1$ and $q_2$.  
\end{lemma}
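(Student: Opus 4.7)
The plan is to pass to matrix language and carry out a short direct computation built on the defining property of the Hamilton map together with the skew-symmetry identity (\ref{a1}). Let $J$ denote the matrix of the symplectic form, so that $\sigma(X,Y)=\langle X,JY\rangle$ for all $X,Y\in\rr^{2n}$; then $J^T=-J$, $J^2=-I$, and the identity (\ref{a1}) translates into the algebraic relation $F_j^T J=-JF_j$, or equivalently the fact that each matrix $JF_j$ is symmetric. Consequently $q_j(X)=\sigma(X,F_jX)=\langle X,JF_jX\rangle$ is a standard quadratic form whose Euclidean gradient on $\rr^{2n}$ is $\nabla q_j(X)=2\,JF_jX$.

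Next I would use the basic identity $\{f_1,f_2\}(X)=\sigma(\nabla f_1(X),\nabla f_2(X))$, valid for any two smooth functions on $\rr^{2n}$ and immediate from the coordinate definition of the Poisson bracket. Applied to $q_1$ and $q_2$, this gives
$$\{q_1,q_2\}(X)=4\,\sigma(JF_1X,JF_2X)=4\,\langle JF_1X,J^2F_2X\rangle=-4\,\langle JF_1X,F_2X\rangle,$$
using $J^2=-I$ in the last step. Transposing the factor $JF_1$ across the inner product and invoking the skew-symmetry $F_1^T J=-JF_1$ (together with $J^T=-J$) then rewrites this as $\{q_1,q_2\}(X)=-4\,\langle X,JF_1F_2X\rangle$.

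Finally, since a quadratic form $X\mapsto\langle X,MX\rangle$ depends only on the symmetric part of $M$, I would symmetrize $JF_1F_2$. Using $J^T=-J$ together with $F_j^T J=-JF_j$ twice yields
$$(JF_1F_2)^T=F_2^T F_1^T J^T=-F_2^T F_1^T J=F_2^T JF_1=-JF_2F_1,$$
so the symmetric part of $JF_1F_2$ equals $\tfrac12(JF_1F_2-JF_2F_1)=\tfrac12J[F_1,F_2]$. Therefore
$$\{q_1,q_2\}(X)=-2\,\langle X,J[F_1,F_2]X\rangle=\sigma\big(X,-2[F_1,F_2]X\big).$$
The same double application of the skew-symmetry identity confirms that $-2[F_1,F_2]$ is itself $\sigma$-skew (indeed, $([F_1,F_2])^T J=(F_1F_2)^T J-(F_2F_1)^T J=-J[F_1,F_2]$), hence qualifies as a Hamilton map, and the uniqueness in (\ref{10}) forces it to be the Hamilton map of $\{q_1,q_2\}$. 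There is no conceptual obstacle here; the one place that demands care is the chain of transpositions, where the signs from $J^T=-J$ and from the skew-symmetry of each $F_j$ must combine correctly to produce the commutator structure together with the numerical factor $-2$.
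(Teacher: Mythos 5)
Your proof is correct and follows essentially the same route as the paper's: both compute $\nabla q_j$ via the matrix of $\sigma$, use $\{q_1,q_2\}=\sigma(\nabla q_1,\nabla q_2)$, simplify with the $\sigma$-skewness of $F_1,F_2$, symmetrize to produce the commutator, and then check that $-2[F_1,F_2]$ is $\sigma$-skew so that $\sigma(X,-2[F_1,F_2]Y)$ is the polarized form. The only differences are notational (your $J$ versus the paper's matrix $\sigma$, and symmetrizing at the matrix level rather than on the bilinear form), and your sign bookkeeping is consistent throughout.
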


\medskip

\begin{proof}
We notice from (\ref{10}) and (\ref{a1}) that for all $X,Y \in \rr^{2n}$,
$$q_j(X,Y)=\sigma(X,F_jY)=-\sigma(F_jX,Y)=\langle(-\sigma F_j)X,Y \rangle, \quad j \in \{1,2\},$$
with 
\begin{equation}\label{inf13.5}
\sigma=\left(
  \begin{array}{cc}
  0 & I_n \\
  -I_n & 0 \\
  \end{array}
\right), \quad  \langle X,Y \rangle=\sum_{j=1}^{2n}{X_j Y_j}.
\end{equation}
It follows that for all $X \in \rr^{2n}$,
\begin{equation}\label{inf14}
\nabla q_j(X)=-2 \sigma F_j X. 
\end{equation}
Since we have
$$\{q_1,q_2\}(X)=\sigma\big(\nabla q_1(X),\nabla q_2(X)\big), \quad X=(x,\xi) \in \rr^{2n},$$ 
we deduce from (\ref{11}), (\ref{a1}), (\ref{inf13.5}) and (\ref{inf14}) that for all $X \in \rr^{2n}$,
\begin{multline}\label{inf15}
\{q_1,q_2\}(X)=\sigma(-2 \sigma F_1 X,-2 \sigma F_2 X)=4\langle\sigma^2 F_1 X,\sigma F_2 X \rangle
=-4 \langle F_1 X, \sigma F_2 X \rangle \\ 
=-4 \langle \sigma^T F_1 X,  F_2 X \rangle=4 \langle \sigma F_1 X,  F_2 X \rangle=4\sigma(F_1 X,F_2 X)=-4\sigma(X,F_1 F_2X),
\end{multline}
which induces that 
\begin{multline}\label{inf16}
\{q_1,q_2\}(X)=\frac{1}{2}\big[4\sigma(F_1 F_2X, X) -4\sigma(X,F_1 F_2X)\big]\\ =2\big[\sigma(X, F_2 F_1X) -\sigma(X,F_1 F_2X)\big]
=-2\sigma(X,[F_1,F_2]X), 
\end{multline}
by skew-symmetry of $\sigma$ and skew-symmetry of Hamilton maps with respect to $\sigma$.
Since 
\begin{multline*}
\sigma(X,[F_1,F_2]Y)=\sigma(X, F_1 F_2Y) -\sigma(X,F_2 F_1Y) =\sigma(F_2 F_1X, Y) -\sigma(F_1 F_2X,Y)\\
=-\sigma([F_1,F_2]X,Y)=\sigma(Y,[F_1,F_2]X),
\end{multline*}
we deduce that 
$$\sigma(X,-2[F_1,F_2]Y),$$ 
is the polarized form associated to the quadratic form $\{q_1,q_2\}$ and that $-2[F_1,F_2]$ is its Hamilton map. 
\end{proof}

The following lemma provides an explicit computation for the Poisson bracket 
$$H_{\textrm{Im}q}Q_t=\{\textrm{Im }q,Q_t\}=\frac{\partial \textrm{Im }q}{\partial \xi} \cdot \frac{\partial Q_t}{\partial x}-\frac{\partial \textrm{Im }q}{\partial x} \cdot \frac{\partial Q_t}{\partial \xi}.$$

\medskip

\begin{lemma}\label{lem2}
We have for all $t \geq 0$, $X \in \rr^{2n}$,
\begin{multline*}
(H_{\emph{\textrm{Im}}q}Q_t)(X)=2\sum_{j=0}^{k_0+1}a_jt^{2j+1}N_j(X)-4\sum_{j=0}^{k_0}b_jt^{2j+2}M_{j+1}(X)\\
-4\sum_{j=0}^{k_0}b_jt^{2j+2}\emph{\textrm{Re }}q\big((\emph{\textrm{Im }}F)^{j}X,(\emph{\textrm{Im }}F)^{j+2}X\big).
\end{multline*}
\end{lemma}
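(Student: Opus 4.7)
The plan is to compute $H_{\textrm{Im}q} M_j$ and $H_{\textrm{Im}q} N_j$ separately and then combine them with the coefficients of (\ref{e9}), using that $H_{\textrm{Im}q}$ differentiates only in $X$ so that $t$ plays the role of an inert parameter.

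I would first identify the Hamilton vector field of a quadratic form with Hamilton map $G$ as the linear vector field $X \mapsto 2GX$; in particular $H_{\textrm{Im}q}(X) = 2(\textrm{Im }F)X$, so for any smooth function $f$ on $\rr^{2n}$, $H_{\textrm{Im}q} f(X) = \frac{d}{ds}|_{s=0} f(e^{2s \textrm{Im} F} X)$. This identification follows from (\ref{inf14}) in the proof of Lemma~\ref{ll2} applied with $F_j = \textrm{Im }F$, combined with $\sigma^2 = -I_{2n}$ and the relation $H_f(X) = \sigma \nabla f(X)$.

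Next I would differentiate $M_j$ and $N_j$ along this linear flow. Since $(\textrm{Im }F)^j$ commutes with $e^{2s \textrm{Im} F}$, one has $M_j(e^{2s \textrm{Im} F} X) = \textrm{Re }q(e^{2s \textrm{Im} F} (\textrm{Im }F)^j X)$, and differentiating at $s=0$ via the standard polarization identity $\frac{d}{ds}|_{s=0} p(e^{sA} Y) = 2 \tilde p(Y, AY)$ (valid for any real quadratic form $p$ with polarized form $\tilde p$) yields $H_{\textrm{Im}q} M_j(X) = 4 \textrm{Re }q((\textrm{Im }F)^j X, (\textrm{Im }F)^{j+1} X) = 2 N_j(X)$. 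Applying the same procedure to the symmetric bilinear form defining $N_j$ and using the Leibniz rule in its two slots gives $H_{\textrm{Im}q} N_j(X) = 4 M_{j+1}(X) + 4 \textrm{Re }q((\textrm{Im }F)^j X, (\textrm{Im }F)^{j+2} X)$, where the first contribution arises from the diagonal term $(\textrm{Im }F)^{j+1} X$ paired with itself when $\textrm{Im }F$ acts in the slot containing $(\textrm{Im }F)^j X$.

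Finally I would substitute these two identities into $H_{\textrm{Im}q} Q_t = \sum_{j=0}^{k_0+1} a_j t^{2j+1} H_{\textrm{Im}q} M_j - \sum_{j=0}^{k_0} b_j t^{2j+2} H_{\textrm{Im}q} N_j$ and read off the three sums of the statement, with $N_{k_0+1}(X)$ interpreted by the formula of (\ref{e6}) extended to the index $j = k_0 + 1$. No genuine obstacle is expected here: the identity is purely linear-algebraic and not analytic, and the only care required is to track the factor~$2$ coming from the flow speed $2\,\textrm{Im }F$ and the factor~$2$ from polarization, which combine into the coefficients $2$ and $4$ in the final expression.
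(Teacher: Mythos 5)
Your proof is correct, and the computation it carries out is the same one the paper performs: you differentiate $Q_t$ termwise in $X$, which amounts to establishing the bracket identity that the paper records as (\ref{mari5}), namely $H_{\textrm{Im}q}\big[\textrm{Re }q\big((\textrm{Im }F)^{l_1}X,(\textrm{Im }F)^{l_2}X\big)\big]=2\textrm{Re }q\big((\textrm{Im }F)^{l_1+1}X,(\textrm{Im }F)^{l_2}X\big)+2\textrm{Re }q\big((\textrm{Im }F)^{l_1}X,(\textrm{Im }F)^{l_2+1}X\big)$, specialized to $(l_1,l_2)=(j,j)$ and $(j,j+1)$. The only difference is how this identity is obtained: the paper computes the Hamilton map of the auxiliary quadratic form $\tilde r$ and invokes Lemma~\ref{ll2} (the Poisson bracket corresponds to $-2[F_1,F_2]$), whereas you identify the Hamilton vector field of $\textrm{Im }q$ with the linear field $X\mapsto 2(\textrm{Im }F)X$ (via (\ref{inf14}), $\sigma^2=-I_{2n}$ and $H_f=\sigma\nabla f$) and differentiate along the flow $e^{2s\,\textrm{Im}F}$ using polarization and the Leibniz rule; this commutes correctly with $(\textrm{Im }F)^j$ and produces the factors $2$ and $4$ exactly as claimed. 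Your route is slightly more elementary in that it bypasses the commutator computation of Lemma~\ref{ll2}, at the cost of re-deriving the flow identification; both yield $H_{\textrm{Im}q}M_j=2N_j$ and $H_{\textrm{Im}q}N_j=4M_{j+1}+4\,\textrm{Re }q\big((\textrm{Im }F)^{j}X,(\textrm{Im }F)^{j+2}X\big)$, and your final substitution into (\ref{e9}), with $N_{k_0+1}$ defined by extending (\ref{e6}) to $j=k_0+1$ as the paper also implicitly does, gives the stated formula.
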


\medskip

\begin{proof}
We first prove that for all $l_1$, $l_2 \geq 0$,
\begin{multline}\label{mari5}
H_{\textrm{Im}q} \big[\textrm{Re }q\big((\textrm{Im }F)^{l_1}X,(\textrm{Im }F)^{l_2}X\big)\big]=2\textrm{Re }q\big((\textrm{Im }F)^{l_1+1}X,(\textrm{Im }F)^{l_2}X\big)
\\+2\textrm{Re }q\big((\textrm{Im }F)^{l_1}X,(\textrm{Im }F)^{l_2+1}X\big).
\end{multline}
By using the skew-symmetry property of Hamilton maps (\ref{a1}), we notice that the Hamilton map of the quadratic form
$$\tilde{r}(X)=\textrm{Re }q\big((\textrm{Im }F)^{l_1}X,(\textrm{Im }F)^{l_2}X\big),$$ 
is given by 
\begin{equation}\label{lol10}
\tilde{F}=\frac{1}{2}\big((-1)^{l_1}(\textrm{Im }F)^{l_1}\textrm{Re }F(\textrm{Im }F)^{l_2}+(-1)^{l_2}(\textrm{Im }F)^{l_2}\textrm{Re }F(\textrm{Im }F)^{l_1}\big),
\end{equation} 
since 
\begin{multline}\label{mari6}
(-1)^{l_1}\sigma\big(X,(\textrm{Im }F)^{l_1}\textrm{Re }F(\textrm{Im }F)^{l_2}Y\big)= \sigma\big((\textrm{Im }F)^{l_1}X,\textrm{Re }F(\textrm{Im }F)^{l_2}Y\big)  \\
=  \textrm{Re }q\big((\textrm{Im }F)^{l_1}X,(\textrm{Im }F)^{l_2}Y\big)
=-(-1)^{l_2}\sigma\big((\textrm{Im }F)^{l_2}\textrm{Re }F(\textrm{Im }F)^{l_1}X,Y\big),
\end{multline}
for all $l_1$, $l_2 \geq 0$, $X,Y \in \rr^{2n}$. We deduce from Lemma~\ref{ll2} that the Hamiton map of the quadratic form 
$$H_{\textrm{Im}q} \ \tilde{r}=\big\{\textrm{Im }q,\tilde{r}\big\}=\frac{\partial \textrm{Im }q}{\partial \xi}\cdot\frac{\partial \tilde{r}}{\partial x}-\frac{\partial \textrm{Im } q}{\partial x}\cdot\frac{\partial \tilde{r}}{\partial \xi},$$
is given by the commutator $-2[\textrm{Im }F,\tilde{F}]$, that is,
$$H_{\textrm{Im}q} \ \tilde{r}(X)=-2\sigma\big(X,[\textrm{Im }F,\tilde{F}]X\big).$$
By using (\ref{mari6}), a direct computation provides (\ref{mari5}).
We deduce from (\ref{e5}), (\ref{e6}), (\ref{e9}) and (\ref{mari5}) that 
\begin{multline}\label{e19}
(H_{\textrm{Im}q}Q_t)(X)=2\sum_{j=0}^{k_0+1}a_jt^{2j+1}N_j(X)\\
-4\sum_{j=0}^{k_0}b_jt^{2j+2}\big[M_{j+1}(X)+\textrm{Re }q\big((\textrm{Im }F)^{j}X,(\textrm{Im }F)^{j+2}X\big)\big].
\end{multline}
This ends the proof of Lemma~\ref{lem2}.
\end{proof}

\medskip

\begin{lemma}\label{lem3}
There exists a positive constant $C>0$ such that for all $0<t \leq 1$, $X\in \rr^{2n}$,
$$\Big(\frac{d}{dt}Q_t\Big)(X)+\frac{1}{2}(H_{\emph{\textrm{Im}}q}Q_t)(X)-CM_0(X) \leq 0.$$
\end{lemma}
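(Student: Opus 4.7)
The goal is to compute $\frac{d}{dt}Q_t+\frac{1}{2}H_{\textrm{Im}q}Q_t$ explicitly and then bound all off-diagonal contributions (the $N_j$ terms and the cross-Poisson terms) in terms of the diagonal $M_j$ terms via weighted Young's inequalities, choosing the weights so that the recursive definitions in~(\ref{e7})--(\ref{e8}) make the coefficient of each $t^{2j}M_j$ with $j\ge 1$ non-positive. Only the coefficient of $M_0$ will remain positive and bounded, yielding the claimed constant $C$.

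\textbf{First,} I would differentiate~(\ref{e9}) directly to obtain
$$\frac{d}{dt}Q_t(X)=\sum_{j=0}^{k_0+1}(2j+1)a_jt^{2j}M_j(X)-\sum_{j=0}^{k_0}(2j+2)b_jt^{2j+1}N_j(X),$$
and add $\frac{1}{2}H_{\textrm{Im}q}Q_t(X)$ from Lemma~\ref{lem2}. After reindexing $\sum_{j=0}^{k_0}b_jt^{2j+2}M_{j+1}$ as $\sum_{\ell=1}^{k_0+1}b_{\ell-1}t^{2\ell}M_\ell$, the ``diagonal'' coefficient of $t^{2j}M_j$ becomes $a_0$ at $j=0$ and $(2j+1)a_j-2b_{j-1}$ for $1\le j\le k_0+1$, while the ``off-diagonal'' remainder consists of $\sum_{j=0}^{k_0}[a_j-(2j+2)b_j]t^{2j+1}N_j(X)$, the leftover $a_{k_0+1}t^{2k_0+3}N_{k_0+1}(X)$, and $-2\sum_{j=0}^{k_0}b_jt^{2j+2}\textrm{Re }q\big((\textrm{Im }F)^jX,(\textrm{Im }F)^{j+2}X\big)$.

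\textbf{Second,} starting from $|N_j|\le 2\sqrt{M_jM_{j+1}}$ and $|\textrm{Re }q((\textrm{Im }F)^jX,(\textrm{Im }F)^{j+2}X)|\le\sqrt{M_jM_{j+2}}$, I would apply weighted Young's inequalities to spread each off-diagonal term onto $t^{2j}M_j$ and $t^{2j+2}M_{j+1}$ (resp.\ $t^{2j+4}M_{j+2}$). The weights must be chosen in harmony with~(\ref{e7})--(\ref{e8}): parameter $b_j/2$ on $N_j$ ($0\le j\le k_0$) yields $\frac{2(a_j-(2j+2)b_j)^2}{b_j}t^{2j}M_j+\frac{b_j}{2}t^{2j+2}M_{j+1}$; parameter $1/b_j$ on the cross-Poisson term of index $j$ yields $b_j^2\,t^{2j}M_j+t^{2j+4}M_{j+2}$; and parameter $1/c_1$ on $N_{k_0+1}$ yields $a_{k_0+1}t^{2k_0+2}M_{k_0+1}+\frac{1}{c_1}t^{2k_0+4}M_{k_0+2}$ (using $a_{k_0+1}=1/c_1$). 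Finally,~(\ref{e4}) together with $0<t\le 1$ absorbs the residual $t^{2k_0+4}M_{k_0+2}$, adding a contribution of $c_1+1$ to the coefficient of each $t^{2j}M_j$, $0\le j\le k_0$.

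\textbf{Third,} I would collect everything and verify that the coefficient of each $t^{2j}M_j$ is non-positive for $1\le j\le k_0+1$: for $2\le j\le k_0$ it reads $(2j+1)a_j-\frac{3}{2}b_{j-1}+\frac{2(a_j-(2j+2)b_j)^2}{b_j}+b_j^2+c_1+2$, which vanishes exactly by~(\ref{e8}); the cases $j=1$ (no cross-Poisson contribution at $j'=-1$) and $j=k_0+1$ (no $M_{k_0+2}$-absorption contribution there) are analogous, with just enough slack built into~(\ref{e7})--(\ref{e8}). What remains is the constant coefficient of $M_0$, namely $C:=a_0+\frac{2(a_0-2b_0)^2}{b_0}+b_0^2+c_1+1$. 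The main obstacle is identifying the correct Young weights $b_j/2$, $1/b_j$ and $1/c_1$; they are reverse-engineered from~(\ref{e7})--(\ref{e8}), where each summand in the definition of $b_{j-1}$ corresponds to a specific off-diagonal contribution. Once these weights are guessed, the remainder is mechanical algebra and careful bookkeeping of which off-diagonal bound contributes to which $M_j$.
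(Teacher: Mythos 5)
Your proposal is correct and follows essentially the same route as the paper's proof: the same differentiation of $Q_t$, the same Cauchy--Schwarz/weighted Young bounds on the $N_j$ and cross terms with exactly the weights $\eps=\frac{b_jt}{2(a_j-(2j+2)b_j)}$, $\eps=t^2/b_j$, $\eps=t$, the same absorption of $t^{2k_0+4}M_{k_0+2}$ via (\ref{e4}), and the same bookkeeping showing the coefficients of $t^{2j}M_j$, $j\geq 1$, are non-positive by (\ref{e7})--(\ref{e8}), leaving only the $M_0$ coefficient to be dominated by $C$. The only inessential difference is that the paper first verifies $a_j-(2j+2)b_j>0$ in (\ref{e22})--(\ref{e23}) before choosing the Young parameters, a detail your squared expressions handle just as well.
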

 
\medskip

\begin{proof} 
With $\eps=\frac{t^2}{b_j}>0$, it follows from (\ref{e11}) that for all $0 \leq j \leq k_0$, $t>0$, $X \in \rr^{2n}$, 
\begin{multline}\label{e20}
-4b_jt^{2j+2}\textrm{Re }q\big((\textrm{Im }F)^{j}X,(\textrm{Im }F)^{j+2}X\big) \\
\leq 2b_jt^{2j+2}\Big(\frac{b_j}{t^2}M_j(X)+\frac{t^2}{b_j}M_{j+2}(X)\Big)=2b_j^2t^{2j}M_j(X)+2t^{2j+4}M_{j+2}(X).
\end{multline}
We deduce from (\ref{e20}) and Lemma~\ref{lem2} that for all $t>0$, $X\in \rr^{2n}$,
\begin{multline*}
(H_{\textrm{Im}q}Q_t)(X) \leq 2\sum_{j=0}^{k_0+1}a_jt^{2j+1}N_j(X)-4\sum_{j=0}^{k_0}b_jt^{2j+2}M_{j+1}(X)\\
+2\sum_{j=0}^{k_0}b_j^2t^{2j}M_j(X)
+2\sum_{j=0}^{k_0}t^{2j+4}M_{j+2}(X).
\end{multline*}
This implies that for all $t>0$, $X\in \rr^{2n}$,
\begin{multline}\label{e21}
(H_{\textrm{Im}q}Q_t)(X) \leq 2\sum_{j=0}^{k_0+1}a_jt^{2j+1}N_j(X)+2b_0^2M_0(X)+\big(2b_1^2-4b_0\big)t^{2}M_1(X)\\
+\sum_{j=2}^{k_0}(2b_j^2-4b_{j-1}+2)t^{2j}M_j(X)+(2-4b_{k_0})t^{2k_0+2}M_{k_0+1}(X)+2t^{2k_0+4}M_{k_0+2}(X).
\end{multline}
We deduce from (\ref{e9}) and (\ref{e21}) that for all $t>0$, $X\in \rr^{2n}$,
\begin{multline*}
\Big(\frac{d}{dt}Q_t\Big)(X)+\frac{1}{2}(H_{\textrm{Im}q}Q_t)(X)-CM_0(X)\leq \sum_{j=0}^{k_0}\big(a_j-(2j+2)b_j\big)t^{2j+1}N_j(X)\\
+a_{k_0+1}t^{2k_0+3}N_{k_0+1}(X)
+(a_0+b_0^2-C)M_0(X)+\big(3a_1+b_1^2-2b_0\big)t^{2}M_1(X)+t^{2k_0+4}M_{k_0+2}(X) \\
+\sum_{j=2}^{k_0}\big((2j+1)a_j+b_j^2-2b_{j-1}+1\big)t^{2j}M_j(X)
+\big((2k_0+3)a_{k_0+1}+1-2b_{k_0}\big)t^{2k_0+2}M_{k_0+1}(X).
\end{multline*}
We notice from (\ref{e7}) that 
\begin{multline}\label{e22}
a_{k_0}-(2k_0+2)b_{k_0}=\big(2c_1 b_{k_0}-(2k_0+2)\big)b_{k_0}\\
=\Big(\frac{4c_1}{3}\big(1+(2k_0+4)a_{k_0+1}\big) -(2k_0+2)\Big)b_{k_0}
=\Big(\frac{4}{3}\big(c_1+(2k_0+4)\big) -(2k_0+2)\Big)b_{k_0}>0.
\end{multline}
On the other hand, it follows from (\ref{e8}) that for all $1 \leq j \leq k_0$,
\begin{multline}\label{e23}
a_{j-1}-2jb_{j-1}=\Big(\frac{8b_{j-1}}{a_{j}}-2j\Big)b_{j-1}\\
=\Big(\frac{16}{3a_j}\Big(2+c_1+(2j+1)a_{j}+b_j^2+\frac{2\big(a_j-(2j+2)b_j\big)^2}{b_j}\Big)-2j\Big)b_{j-1}>0.
\end{multline}
With $\eps=\frac{b_jt}{2(a_{j}-(2j+2)b_{j})}>0$, we deduce from (\ref{e10}), (\ref{e22}) and (\ref{e23}) that for all $t>0$, $0 \leq j \leq k_0$,
\begin{equation}\label{e24}
|N_j(X)| \leq \frac{2(a_{j}-(2j+2)b_{j})}{b_jt}M_{j}(X)+\frac{b_jt}{2(a_{j}-(2j+2)b_{j})}M_{j+1}(X),
\end{equation}
\begin{equation}\label{e25}
|N_{k_0+1}(X)| \leq \frac{1}{t}M_{k_0+1}(X)+t M_{k_0+2}(X).
\end{equation}
It follows from (\ref{e24}) and (\ref{e25}) that for all $t>0$, $X \in \rr^{2n}$,
\begin{multline}\label{e26}
\sum_{j=0}^{k_0}\big(a_j-(2j+2)b_j\big)t^{2j+1}N_j(X)
+a_{k_0+1}t^{2k_0+3}N_{k_0+1}(X) \\
\leq \frac{2(a_{0}-2b_{0})^2}{b_0}M_{0}(X)
+\sum_{j=1}^{k_0}\Big(\frac{2(a_{j}-(2j+2)b_{j})^2}{b_j}+\frac{b_{j-1}}{2}\Big)t^{2j}M_{j}(X)\\
+\Big(a_{k_0+1}+\frac{b_{k_0}}{2}\Big)t^{2k_0+2}M_{k_0+1}(X)+a_{k_0+1}t^{2k_0+4} M_{k_0+2}(X).
\end{multline}
We deduce from (\ref{e26}) that for all $t>0$, $X \in \rr^{2n}$,
\begin{multline*}
\Big(\frac{d}{dt}Q_t\Big)(X)+\frac{1}{2}(H_{\textrm{Im}q}Q_t)(X)-CM_0(X)\leq 
\Big(a_0+b_0^2+\frac{2(a_{0}-2b_{0})^2}{b_0}-C\Big)M_0(X)\\
+\Big(3a_1+b_1^2+\frac{2(a_{1}-4b_{1})^2}{b_1}-\frac{3b_{0}}{2}\Big)t^{2}M_1(X)
+\Big((2k_0+4)a_{k_0+1}+1-\frac{3b_{k_0}}{2}\Big)t^{2k_0+2}M_{k_0+1}(X)
+\\
\sum_{j=2}^{k_0}\Big((2j+1)a_j+b_j^2+1+\frac{2(a_{j}-(2j+2)b_{j})^2}{b_j}-\frac{3b_{j-1}}{2}\Big)t^{2j}M_j(X)
+(1+a_{k_0+1})t^{2k_0+4}M_{k_0+2}(X) .
\end{multline*}
It follows from (\ref{e4}), (\ref{e5}) and (\ref{e7}) that for all $0<t \leq 1$,
\begin{multline}\label{e27}
(1+a_{k_0+1})t^{2k_0+4}M_{k_0+2}(X)\\ \leq c_1 (1+a_{k_0+1})t^{2k_0+4}\sum_{j=0}^{k_0}M_j(X)\leq (c_1+1)\sum_{j=0}^{k_0}t^{2j}M_j(X).
\end{multline}
We deduce from (\ref{e27}) that for all $0<t \leq 1$,
\begin{multline*}
\Big(\frac{d}{dt}Q_t\Big)(X)+\frac{1}{2}(H_{\textrm{Im}q}Q_t)(X)-CM_0(X)\leq 
\Big(c_1+1+a_0+b_0^2+\frac{2(a_{0}-2b_{0})^2}{b_0}-C\Big)M_0(X)+\\
\Big(c_1+1+3a_1+b_1^2+\frac{2(a_{1}-4b_{1})^2}{b_1}-\frac{3b_{0}}{2}\Big)t^{2}M_1(X)
+\Big((2k_0+4)a_{k_0+1}+1-\frac{3b_{k_0}}{2}\Big)t^{2k_0+2}M_{k_0+1}(X)
\\
+\sum_{j=2}^{k_0}\Big(c_1+(2j+1)a_j+b_j^2+2+\frac{2(a_{j}-(2j+2)b_{j})^2}{b_j}-\frac{3b_{j-1}}{2}\Big)t^{2j}M_j(X).
\end{multline*}
It follows from (\ref{e7}) and (\ref{e8}) that for all $0<t \leq 1$,
\begin{multline}\label{e28}
\Big(\frac{d}{dt}Q_t\Big)(X)+\frac{1}{2}(H_{\textrm{Im}q}Q_t)(X)-CM_0(X)\\
\leq \Big(c_1+1+a_0+b_0^2+\frac{2(a_{0}-2b_{0})^2}{b_0}-C\Big)M_0(X)-t^{2}M_1(X).
\end{multline}
By choosing the positive constant $C>0$ so that 
$$c_1+1+a_0+b_0^2+\frac{2(a_{0}-2b_{0})^2}{b_0}-C<0,$$
we deduce from (\ref{e5}) and (\ref{e28}) that 
for all $0<t \leq 1$, $X \in \rr^{2n}$,
\begin{equation}\label{e29}
\Big(\frac{d}{dt}Q_t\Big)(X)+\frac{1}{2}(H_{\textrm{Im}q}Q_t)(X)-CM_0(X) \leq 0.
\end{equation}
This ends the proof of Lemma~\ref{lem3}.

\end{proof}

Let $u \in \mathscr{S}(\rr^n)$. We consider the time-dependent functional
\begin{equation}\label{e30}
G(t)=(Q_t^w(x,D_x)e^{-\frac{t}{2}q^w}u,e^{-\frac{t}{2}q^w}u)_{L^2}+C\|e^{-\frac{t}{2}q^w}u\|_{L^2}^2,
\end{equation} 
for all $0 \leq t \leq 1$, where $(e^{-tq^w})_{t \geq 0}$ denotes the contraction semigroup generated by the quadratic operator $q^w(x,D_x)$ and $C>0$ is the positive constant given by Lemma~\ref{lem3}. 
We observe from (\ref{conti}) and (\ref{e9}) that the mapping $G$ is continuous on $[0,+\infty)$, differentiable on $(0,+\infty)$ and satisfies 
\begin{equation}\label{e31}
G(0)=C\|u\|_{L^2}^2.
\end{equation} 
On the other hand, we have
\begin{multline}\label{e32}
G'(t)=\Big(\Big(\frac{d}{dt}Q_t\Big)^w(x,D_x)e^{-\frac{t}{2}q^w}u,e^{-\frac{t}{2}q^w}u\Big)_{L^2}\\
-\frac{C}{2}\big(q^w(x,D_x)e^{-\frac{t}{2}q^w}u,e^{-\frac{t}{2}q^w}u\big)_{L^2}
-\frac{C}{2}\big(e^{-\frac{t}{2}q^w}u,q^w(x,D_x)e^{-\frac{t}{2}q^w}u\big)_{L^2}\\
-\frac{1}{2}\big(Q_t^w(x,D_x)q^w(x,D_x)e^{-\frac{t}{2}q^w}u,e^{-\frac{t}{2}q^w}u\big)_{L^2}
-\frac{1}{2}\big(Q_t^w(x,D_x)e^{-\frac{t}{2}q^w}u,q^w(x,D_x)e^{-\frac{t}{2}q^w}u\big)_{L^2}.
\end{multline} 
By using that $Q_t^w(x,D_x)$ is selfadjoint since its Weyl symbol is real-valued, we obtain that 
\begin{multline}\label{e33}
G'(t)=\Big(\Big(\frac{d}{dt}Q_t\Big)^w(x,D_x)e^{-\frac{t}{2}q^w}u,e^{-\frac{t}{2}q^w}u\Big)_{L^2}\\
-C\textrm{Re}\big(q^w(x,D_x)e^{-\frac{t}{2}q^w}u,e^{-\frac{t}{2}q^w}u\big)_{L^2}
-\textrm{Re}\big(Q_t^w(x,D_x)q^w(x,D_x)e^{-\frac{t}{2}q^w}u,e^{-\frac{t}{2}q^w}u\big)_{L^2}.
\end{multline} 
We notice from (\ref{e5}) that 
\begin{multline}\label{e34}
\textrm{Re}\big(q^w(x,D_x)e^{-\frac{t}{2}q^w}u,e^{-\frac{t}{2}q^w}u\big)_{L^2}=\big((\textrm{Re }q)^w(x,D_x)e^{-\frac{t}{2}q^w}u,e^{-\frac{t}{2}q^w}u\big)_{L^2}\\
=\big(M_0^w(x,D_x)e^{-\frac{t}{2}q^w}u,e^{-\frac{t}{2}q^w}u\big)_{L^2}.
\end{multline}
On the other hand, we have
\begin{align}\label{e35}
& \textrm{Re}\big(Q_t^w(x,D_x)i(\textrm{Im }q)^w(x,D_x)e^{-\frac{t}{2}q^w}u,e^{-\frac{t}{2}q^w}u\big)_{L^2}\\ \notag
=& \textrm{Re}\big(i(\textrm{Im }q)^w(x,D_x)e^{-\frac{t}{2}q^w}u,Q_t^w(x,D_x)e^{-\frac{t}{2}q^w}u\big)_{L^2}\\ \notag
=& \frac{1}{2}\big([Q_t^w(x,D_x),i(\textrm{Im }q)^w(x,D_x)]e^{-\frac{t}{2}q^w}u,e^{-\frac{t}{2}q^w}u\big)_{L^2},
\end{align} 
since $Q_t^w(x,D_x)$ is selfadjoint and $i(\textrm{Im }q)^w(x,D_x)$ is skew-adjoint. The Weyl calculus (see e.g. \cite{hormander}, Theorem~18.5.4) shows that the commutator $$[Q_t^w(x,D_x),i(\textrm{Im }q)^w(x,D_x)],$$ 
is a quadratic operator whose Weyl symbol is exactly given by the Poisson bracket
\begin{equation}\label{e36}
\frac{1}{i}\{Q_t,i\textrm{Im }q\}=-H_{\textrm{Im}q}Q_t.
\end{equation}
It follows from (\ref{e35}) and (\ref{e36}) that 
\begin{multline}\label{e37}
\textrm{Re}\big(Q_t^w(x,D_x)i(\textrm{Im }q)^w(x,D_x)e^{-\frac{t}{2}q^w}u,e^{-\frac{t}{2}q^w}u\big)_{L^2}\\ 
= -\frac{1}{2}\big((H_{\textrm{Im}q}Q_t)^w(x,D_x)e^{-\frac{t}{2}q^w}u,e^{-\frac{t}{2}q^w}u\big)_{L^2}.
\end{multline} 
We deduce from (\ref{e33}), (\ref{e34}) and (\ref{e37}) that
\begin{multline}\label{e38}
G'(t)=\Big(\Big(\frac{d}{dt}Q_t+\frac{1}{2}H_{\textrm{Im}q}Q_t-CM_0\Big)^w(x,D_x)e^{-\frac{t}{2}q^w}u,e^{-\frac{t}{2}q^w}u\Big)_{L^2}\\
-\textrm{Re}\big(Q_t^w(x,D_x)(\textrm{Re }q)^w(x,D_x)e^{-\frac{t}{2}q^w}u,e^{-\frac{t}{2}q^w}u\big)_{L^2}.
\end{multline} 
We need the following lemma:

\medskip

\begin{lemma}\label{lem4}
Let $\tilde{q} : \rr_x^n \times \rr_{\xi}^n \rightarrow \rr$, $n \geq 1$, be a non-negative quadratic form $\tilde{q} \geq 0$. Then, the quadratic operator $\tilde{q}^w(x,D_x)$ is accretive
$$\forall u \in \mathscr{S}(\rr^n), \quad (\tilde{q}^w(x,D_x)u,u)_{L^2} \geq 0.$$
\end{lemma}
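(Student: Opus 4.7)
The plan is to reduce the positivity of $\tilde q^w(x,D_x)$ to that of a manifest sum of squares of symmetric operators. Since $\tilde q\colon \rr^{2n}\to \rr$ is a non-negative quadratic form, the associated symmetric real $2n\times 2n$ matrix is positive semi-definite, so by spectral decomposition there exist real linear forms
$$\ell_k(x,\xi)=\langle a_k,x\rangle+\langle b_k,\xi\rangle,\qquad a_k,b_k\in\rr^n,\ 1\le k\le m,$$
such that
$$\tilde q(x,\xi)=\sum_{k=1}^m \ell_k(x,\xi)^2.$$
This is the only structural input one needs about $\tilde q$.

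Next, I will use the elementary identity $(\ell^2)^w=(\ell^w)^2$ valid for any real linear symbol $\ell$. Indeed, the Weyl quantization of such $\ell$ is the symmetric first-order differential operator
$$\ell^w(x,D_x)=\langle a_k,x\rangle+\langle b_k,D_x\rangle$$
on $\mathscr{S}(\rr^n)$, and the Moyal product expansion for Weyl symbols
$$\ell\#\ell=\ell^2+\tfrac{1}{2i}\{\ell,\ell\}+\text{higher order derivative terms}$$
terminates at the zeroth-order term, since $\{\ell,\ell\}=0$ and all derivatives of $\ell$ of order $\ge 2$ vanish. Hence the composition $\ell^w(x,D_x)\circ\ell^w(x,D_x)$ has Weyl symbol $\ell^2$, so $(\ell^w)^2=(\ell^2)^w$ on $\mathscr{S}(\rr^n)$.

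Combining these two steps, the linearity of the Weyl quantization yields
$$\tilde q^w(x,D_x)=\sum_{k=1}^m (\ell_k^2)^w(x,D_x)=\sum_{k=1}^m \bigl(\ell_k^w(x,D_x)\bigr)^2,$$
and, using the symmetry of each $\ell_k^w(x,D_x)$ on $\mathscr{S}(\rr^n)$, I will conclude that for every $u\in\mathscr{S}(\rr^n)$,
$$(\tilde q^w(x,D_x)u,u)_{L^2}=\sum_{k=1}^m\|\ell_k^w(x,D_x)u\|_{L^2}^2\ge 0,$$
which is the desired accretivity. The only point that requires a small verification is the identity $(\ell^w)^2=(\ell^2)^w$ for linear $\ell$, which can alternatively be checked by hand using the canonical commutation relation $[x_j,D_{x_k}]=i\delta_{jk}$; no serious obstacle is expected.
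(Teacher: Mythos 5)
Your proof is correct, and it takes a genuinely different route from the paper. The paper invokes H\"ormander's symplectic classification of non-negative quadratic forms (reducing $\tilde{q}$ by a real linear symplectic map to $\sum_j \lambda_j(\xi_j^2+x_j^2)+\sum_j x_j^2$) together with the metaplectic invariance of the Weyl quantization, so that $\tilde{q}^w$ is conjugated by a unitary operator to an explicit sum of harmonic oscillators and multiplication operators, whose accretivity is manifest. You instead use only the Euclidean spectral decomposition of the positive semidefinite matrix of $\tilde{q}$, writing $\tilde{q}=\sum_k \ell_k^2$ as a sum of squares of real \emph{linear} forms, and the exact composition identity $(\ell^2)^w=(\ell^w)^2$ for linear symbols (which indeed follows either from the terminating Moyal expansion or directly from the symmetrization rule (\ref{forsym}) for the cross terms $x_j\xi_k$), so that $(\tilde{q}^w u,u)_{L^2}=\sum_k\|\ell_k^w u\|_{L^2}^2\geq 0$ by the symmetry of each $\ell_k^w$ on $\mathscr{S}(\rr^n)$. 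Your argument is more elementary and self-contained, avoiding both the symplectic normal form and metaplectic operators; the paper's approach costs heavier machinery but yields the precise symplectic normal form of $\tilde{q}^w$, which is of independent use (e.g.\ it exhibits the spectrum and is the standard tool elsewhere in this circle of problems), whereas the sum-of-squares factorization gives exactly the non-negativity and nothing more. Both proofs are complete; no gap in yours.
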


\medskip

\begin{proof}
We deduce from~\cite{hormander} (Theorem~21.5.3) that there exists a real linear symplectic transformation $\chi : \rr^{2n} \rightarrow \rr^{2n}$ such that 
\begin{equation}\label{inf2}
(\tilde{q} \circ \chi)(x,\xi)=\sum_{j=1}^{k}{\lambda_j(\xi_j^2+x_j^2)}
+\sum_{j=k+1}^{k+l}{x_j^2}, 
\end{equation}
with $k,l \geq 0$ and $\lambda_j>0$ for all $j=1,...,k$. By the symplectic invariance of the Weyl quantization~\cite{hormander} (Theorem 18.5.9), we can find a metaplectic operator $\mathcal{T}$,
which is a unitary transformation of $\lde$ and an automorphism of the Schwartz space $\mathscr{S}(\rr^n)$ satisfying 
\begin{equation}\label{inf3}
\tilde{q}^w(x,D_x)=\mathcal{T}^{-1} \Big(\sum_{j=1}^{k}{\lambda_j(D_{x_j}^2+x_j^2)}
+\sum_{j=k+1}^{k+l}{x_j^2} \Big)\mathcal{T},
\end{equation}
with $D_{x_j}=i^{-1}\partial_{x_j}$. We obtain that for all $u \in \mathscr{S}(\rr^n)$,
$$(\tilde{q}^w(x,D_x)u,u)_{L^2}=\sum_{j=1}^{k}{\lambda_j (\|D_{x_j}\mathcal{T}u\|_{L^2}^2+\|x_j\mathcal{T}u\|_{L^2}^2 )}+\sum_{j=k+1}^{k+l}{\|x_j\mathcal{T}u\|_{L^2}^2} 
\geq  0,$$ 
since $\mathcal{T}$ is a unitary operator on $L^2(\rr^{n})$. This ends the proof of Lemma~\ref{lem4}.
\end{proof}

We notice from (\ref{e5}), (\ref{e6}), (\ref{e9}), Lemmas~\ref{lem2} and~\ref{lem3} that for all $0<t \leq 1$, 
$$\frac{d}{dt}Q_t+\frac{1}{2}H_{\textrm{Im}q}Q_t-CM_0 \leq 0,$$
is a non-positive quadratic form. It follows from (\ref{conti}) and Lemma~\ref{lem4} that for all $0<t \leq 1$,
\begin{equation}\label{e39}
\Big(\Big(\frac{d}{dt}Q_t+\frac{1}{2}H_{\textrm{Im}q}Q_t-CM_0\Big)^w(x,D_x)e^{-\frac{t}{2}q^w}u,e^{-\frac{t}{2}q^w}u\Big)_{L^2} \leq 0.
\end{equation} 
We deduce from (\ref{e38}) and (\ref{e39}) that for all $0<t \leq 1$,
\begin{equation}\label{e40}
G'(t) \leq-\textrm{Re}\big(Q_t^w(x,D_x)(\textrm{Re }q)^w(x,D_x)e^{-\frac{t}{2}q^w}u,e^{-\frac{t}{2}q^w}u\big)_{L^2}.
\end{equation} 
Setting
$$\Gamma=\frac{dx^2+d\xi^2}{\langle (x,\xi)\rangle^2},$$
with $\langle (x,\xi) \rangle^2=1+|x|^2+|\xi|^2$,
we consider the symbol class $S(\langle (x,\xi)\rangle^m,\Gamma)$, with $m \in \rr$, composed of smooth functions $a \in C^{\infty}(\rr^{2n})$ satisfying
$$\forall \alpha \in \mathbb{N}^{2n}, \exists C_{\alpha}>0, \forall (x,\xi) \in \rr^{2n}, \quad |\partial_{x,\xi}^{\alpha}a(x,\xi)| \leq C_{\alpha}\langle (x,\xi)\rangle^{m-|\alpha|}.$$
The metric $\Gamma$ is admissible, that is, slowly varying, satisfying the uncertainty principle and temperate. The function $\langle (x,\xi)\rangle^m$, with $m \in \rr$, is a $\Gamma$-slowly varying weight~\cite{Le} (Lemma~2.2.18). Furthermore, the gain function in the symbolic calculus associated to the symbol classes $S(\langle (x,\xi)\rangle^m,\Gamma)$ is given by
$\Lambda_{\Gamma}=\langle (x,\xi)\rangle^2$. It follows that any quadratic form is a first order symbol belonging to the symbol class $S(\Lambda_{\Gamma},\Gamma)$. We deduce from (\ref{e5}), (\ref{e6}), (\ref{e9}) and Lemma~\ref{lem1} that the non-negative symbol $Q_t(\textrm{Re }q) \geq 0$ belongs to the symbol class $S(\Lambda_{\Gamma}^2,\Gamma)$ uniformly with respect to the parameter $0 \leq t \leq 1$. It follows from the Fefferman-Phong inequality, see~\cite{bony} or~\cite{Le} (Theorem~2.5.5), that there exists $C_1>0$ such that for all $u \in \mathscr{S}(\rr^n)$, $0 \leq t \leq 1$,
\begin{equation}\label{e41}
\big((Q_t\textrm{Re }q)^w(x,D_x)u,u\big)_{L^2}+C_1\|u\|_{L^2}^2 \geq 0.
\end{equation}
On the other hand, elements of symbolic calculus in the Weyl quantization, see e.g. the composition formula (2.1.26) in~\cite{Le}, show that 
\begin{equation}\label{e42}
Q_t \sharp^w (\textrm{Re }q)=Q_t(\textrm{Re }q)+\frac{1}{2i}\{Q_t,\textrm{Re }q\}+P_0(t),
\end{equation}
where $P_0$ is a polynomial function, since the coefficients of the quadratic form $Q_t$ are polynomial functions in the $t$-variable. It follows from (\ref{e42}) that 
\begin{equation}\label{e43}
\textrm{Re}\big(Q_t \sharp^w (\textrm{Re }q)\big)=Q_t(\textrm{Re }q)+P_1(t),
\end{equation}
with $P_1(t)=\textrm{Re}(P_0(t))$, since $Q_t$ and $\textrm{Re }q$ are real-valued quadratic forms.
We deduce from (\ref{e43}) that 
\begin{align}\label{e44}
& \ \textrm{Re}\big(Q_t^w(x,D_x)(\textrm{Re }q)^w(x,D_x)e^{-\frac{t}{2}q^w}u,e^{-\frac{t}{2}q^w}u\big)_{L^2}\\ \notag
= & \ \big(\big[\textrm{Re}\big(Q_t \sharp^w(\textrm{Re }q)\big)\big]^w(x,D_x)e^{-\frac{t}{2}q^w}u,e^{-\frac{t}{2}q^w}u\big)_{L^2}\\ \notag
=& \ \big((Q_t\textrm{Re }q)^w(x,D_x)e^{-\frac{t}{2}q^w}u,e^{-\frac{t}{2}q^w}u\big)_{L^2}+P_1(t)\|e^{-\frac{t}{2}q^w}u\|_{L^2}^2.
\end{align}
It follows from (\ref{e41}) and (\ref{e44}) that for all $u \in \mathscr{S}(\rr^n)$, $0 \leq t \leq 1$,
\begin{equation}\label{e45}
\textrm{Re}\big(Q_t^w(x,D_x)(\textrm{Re }q)^w(x,D_x)e^{-\frac{t}{2}q^w}u,e^{-\frac{t}{2}q^w}u\big)_{L^2}\geq (P_1(t)-C_1)\|e^{-\frac{t}{2}q^w}u\|_{L^2}^2.
\end{equation}
We deduce from (\ref{e40}) and (\ref{e45}) that for all $u \in \mathscr{S}(\rr^n)$, $0 < t \leq 1$,
\begin{equation}\label{e46}
G'(t) \leq (C_1-P_1(t))\|e^{-\frac{t}{2}q^w}u\|_{L^2}^2 \leq C_2\|u\|_{L^2}^2,
\end{equation} 
with $C_2=C_1+\sup_{0 \leq t \leq 1}|P_1(t)|>0$, since $(e^{-t q^w})_{t \geq 0}$ is a contraction semigroup on $L^2$.
It follows from (\ref{e31}) and (\ref{e46}) that for all $u \in \mathscr{S}(\rr^n)$, $0 \leq t \leq 1$,
\begin{equation}\label{e47}
G(t)=G(0)+\int_0^tG'(s)ds\leq (C+C_2t)\|u\|_{L^2}^2\leq (C+C_2)\|u\|_{L^2}^2.
\end{equation} 
We deduce from (\ref{conti}), (\ref{e30}), Lemmas~\ref{lem1f} and~\ref{lem4} that for all $u \in \mathscr{S}(\rr^n)$, $t \geq 0$, $X_0=(x_0,\xi_0) \in \rr^{2n}$, $X_0 \neq 0$, 
\begin{equation}\label{e48}
G(t) \geq \frac{c}{|X_0|^2}\big(\inf(1,t)\big)^{2k_{X_0}+1}
\big(\textrm{Op}^w\big((\langle x_0,x\rangle+\langle \xi_0,\xi\rangle)^2\big)e^{-\frac{t}{2}q^w}u,e^{-\frac{t}{2}q^w}u\big)_{L^2},
\end{equation}
where $0 \leq k_{X_0} \leq k_0$ denotes the index of $X_0=(x_0,\xi_0) \in \rr^{2n}$ with respect to the singular space.
On the other hand, the Weyl calculus provides that 
$$(\langle x_0,x\rangle+\langle \xi_0,\xi\rangle)^2=(\langle x_0,x\rangle+\langle \xi_0,\xi\rangle) \sharp^w(\langle x_0,x\rangle+\langle \xi_0,\xi\rangle),$$
since the symbol $\langle x_0,x\rangle+\langle \xi_0,\xi\rangle$ is a linear form.
It follows from (\ref{e48}) that 
for all $u \in \mathscr{S}(\rr^n)$, $t \geq 0$, $X_0=(x_0,\xi_0) \in \rr^{2n}$, $X_0 \neq 0$, 
\begin{equation}\label{e48.a}
G(t) \geq \frac{c}{|X_0|^2}\big(\inf(1,t)\big)^{2k_{X_0}+1}
\|(\langle x_0,x\rangle+\langle \xi_0,D_x\rangle)e^{-\frac{t}{2}q^w}u\|_{L^2}^2.
\end{equation}
We deduce from (\ref{e47}) and (\ref{e48.a}) that there exists a positive constant $C_3>0$ such that for all $u \in \mathscr{S}(\rr^n)$, $0<t \leq 1$, $X_0=(x_0,\xi_0) \in \rr^{2n}$,   
\begin{equation}\label{e49}
\|(\langle x_0,x\rangle+\langle \xi_0,D_x\rangle)e^{-\frac{t}{2}q^w}u\|_{L^2} \leq C_3|X_0| t^{-(2k_{X_0}+1)/2}\|u\|_{L^2}.
\end{equation}

We then study the large time behavior of the differentiation of the semigroup in the phase space direction $X_0=(x_0,\xi_0) \in \rr^{2n}$.
According to (\ref{jkk1}), the eigenvalue with the smallest real part of the operator $q^w(x,D_x)$ is given by
\begin{equation}\label{qz4}
\mu_0=-i \sum_{\substack{\lambda \in \sigma(F)\\  -i\lambda \in \CC_+}} \lambda r_{\lambda}, 
\end{equation}
where $F$ is the Hamilton map of $q$, and $r_{\lambda}$ stands for the dimension of the space of generalized eigenvectors of $F$ in $\cc^{2n}$ associated to the eigenvalue $\lambda \in \sigma(F)$. We know from~\cite{OPPS} (Theorem~2.1) that this eigenvalue has algebraic multiplicity one.
Setting 
\begin{equation}\label{qz4b}
Q=q^w(x,D_x)-\mu_0, \quad \tau_0=2\min_{\substack{\lambda \in \sigma(F)\\ \textrm{Im}\lambda >0}}\textrm{Im }\lambda>0,
\end{equation}
we recall from~\cite{OPPS} (Theorem~2.2) the result of exponential return to equilibrium
\begin{equation}\label{qz5}
\forall 0 \leq \tau < \tau_0, \exists C_4>0, \forall t \geq 0, \quad \|e^{-tQ}-\Pi_0\|_{\mathcal{L}(L^2)} \leq C_4e^{-\tau t},
\end{equation}
where $\Pi_0$ is the rank-one spectral projection associated with the simple eigenvalue zero of the operator $Q$, and $\|\cdot\|_{\mathcal{L}(L^2)}$ stands for the norm of bounded operators on $L^2(\rr^n)$. We deduce from (\ref{qz-1}), (\ref{qz4}), (\ref{qz4b}) and (\ref{qz5}) that there exists a positive constant $C_5>0$ such that 
\begin{equation}\label{ev1}
\forall t \geq 0, \quad \|e^{-tq^w}\|_{\mathcal{L}(L^2)} \leq C_5e^{-\omega_0 t}.
\end{equation}
We observe from the semigroup property, (\ref{conti}), (\ref{e49}) and (\ref{ev1}) that for all $u \in \mathscr{S}(\rr^n)$, $t \geq \frac{1}{2}$, $X_0=(x_0,\xi_0) \in \rr^{2n}$,
\begin{multline}\label{e50}
\|(\langle x_0,x\rangle+\langle \xi_0,D_x\rangle)e^{-t q^w}u\|_{L^2}=\|(\langle x_0,x\rangle+\langle \xi_0,D_x\rangle)e^{-\frac{1}{2}q^w}e^{-(t-\frac{1}{2})q^w}u\|_{L^2} \\
\leq C_3 \sqrt{2}^{2k_{0}+1}|X_0|\|e^{-(t-\frac{1}{2})q^w}u\|_{L^2}\leq C_3C_5 \sqrt{2}^{2k_{0}+1}|X_0|e^{-\omega_0(t-\frac{1}{2})}\|u\|_{L^2}.
\end{multline}
We deduce from (\ref{e49}) and (\ref{e50}) that there exists a positive constant $C_0>0$ such that for all $u \in \mathscr{S}(\rr^n)$, $t>0$, $X_0=(x_0,\xi_0) \in \rr^{2n}$, 
\begin{equation}\label{e51}
 \|(\langle x_0,x\rangle+\langle \xi_0,D_x\rangle)e^{-tq^w}u\|_{L^2} \leq C_0|X_0|\inf(1,t)^{-(2k_{X_0}+1)/2}e^{-\omega_0t}\|u\|_{L^2}.
\end{equation}
By using the density of the Schwartz space in $L^2(\rr^n)$, this estimate extends as 
\begin{multline}\label{e51.6}
\forall u \in L^2(\rr^n), \forall t>0, \forall X_0=(x_0,\xi_0) \in \rr^{2n}, \\ 
\|(\langle x_0,x\rangle+\langle \xi_0,D_x\rangle)e^{-tq^w}u\|_{L^2} \leq C_0|X_0|\inf(1,t)^{-(2k_{X_0}+1)/2}e^{-\omega_0t}\|u\|_{L^2}.
\end{multline}
This ends the proof of Theorem~\ref{th}.

\subsection{Proof of Corollary~\ref{thj}}
This section is devoted to the proof of Corollary~\ref{thj}.
Let $q : \rr^{n}_x \times \rr_{\xi}^n \rightarrow \cc$, with $n \geq 1$, be a quadratic form with a non-negative real part $\textrm{Re }q \geq 0$ and a zero singular space
$$S=\Big(\bigcap_{j=0}^{2n-1}\textrm{Ker}\big[\textrm{Re }F(\textrm{Im }F)^j \big]\Big)\cap \rr^{2n}=\{0\}.$$ 
Let $m \geq 1$, $X_1=(x_1,\xi_1) \in \rr^{2n}$, ..., $X_m=(x_m,\xi_m) \in \rr^{2n}$, and $0 \leq k_0 \leq 2n-1$ be the smallest integer satisfying (\ref{e2}). We know from~\cite{mehler} (Proposition~5.8 and Theorem~5.12) that the semigroup 
$$e^{-tq^w}=\mathscr{K}_{e^{-2itF}} : \mathscr{S}(\rr^n) \rightarrow \mathscr{S}(\rr^n), \quad t>0,$$ 
defined by the quadratic operator $q^w(x,D_x)$, is a Fourier integral operator whose kernel is a Gaussian distribution associated to the positive symplectic linear bijection 
$$e^{-2itF} : \cc^{2n} \rightarrow \cc^{2n}.$$ 
It was proven in the proof of Proposition~5.8 in~\cite{mehler} (pp. 444-445) that for all $(x_0,\xi_0) \in \cc^{2n}$,
\begin{equation}\label{qz2}
(\langle x_0,x\rangle+\langle \xi_0,D_x\rangle)e^{-tq^w}=e^{-tq^w}(\langle y_0,x\rangle+\langle \eta_0,D_x\rangle),
\end{equation}
where $(y_0,\eta_0)=T_t(x_0,\xi_0)$, with $T_t=\mathcal{J}e^{2itF}\mathcal{J}^{-1}$, where $\mathcal{J}(x,\xi)=(\xi,-x)$. The notation $\langle \cdot,\cdot \rangle$ stands here for the bilinear dot product on $\cc^{n}$.
By using the semigroup property, we deduce from (\ref{qz2}) that  
\begin{align*}
&  (\langle x_1, x \rangle+\langle \xi_1,D_x \rangle)\ ... \ (\langle x_m, x \rangle+\langle \xi_m,D_x \rangle)e^{-tq^w}=\\
&  (\langle x_1, x \rangle+\langle \xi_1,D_x \rangle)\ ...\ (\langle x_m, x \rangle+\langle \xi_m,D_x \rangle)\underbrace{e^{-\frac{t}{m}q^w}...\ e^{-\frac{t}{m}q^w}}_{m \textrm{ factors}}=\\
&  (\langle x_1, x \rangle+\langle \xi_1,D_x \rangle)e^{-\frac{t}{m}q^w}(\langle y_2(t),x\rangle+\langle \eta_2(t),D_x\rangle)e^{-\frac{t}{m}q^w}... (\langle y_{m}(t),x\rangle+\langle \eta_{m}(t),D_x\rangle)e^{-\frac{t}{m}q^w},
\end{align*}
where 
$$(y_{j}(t),\eta_{j}(t))=(T_{\frac{t}{m}})^{j-1}(x_j,\xi_j)=\mathcal{J}e^{\frac{2i(j-1)tF}{m}}\mathcal{J}^{-1}(x_j,\xi_j), \qquad 2 \leq j \leq m.$$
It follows from Theorem~\ref{th} that there exists a positive constant $C>1$ such that for all 
$m \geq 1$, $X_1=(x_1,\xi_1)  \in \rr^{2n}$, ..., $X_m=(x_m,\xi_m)  \in \rr^{2n}$, $u \in L^2(\rr^n)$, $0<t \leq 1$, 
\begin{align*}
& \ \|(\langle x_1, x \rangle+\langle \xi_1,D_x \rangle) \ ... \ (\langle x_m, x \rangle+\langle \xi_m,D_x \rangle)e^{-tq^w}u\|_{L^2} \leq  C|X_1|\Big(\frac{m}{t}\Big)^{\frac{2k_{0}+1}{2}}\\
\times & \ \Big[ \big\|\big(\langle \textrm{Re }y_2(t),x\rangle+\langle \textrm{Re }\eta_2(t),D_x\rangle\big)e^{-\frac{t}{m}q^w}... \ \big(\langle y_{m}(t),x\rangle+\langle \eta_{m}(t),D_x\rangle\big)e^{-\frac{t}{m}q^w}u\big\|_{L^2}\\
+ & \ \big\|\big(\langle \textrm{Im }y_2(t),x\rangle+\langle \textrm{Im }\eta_2(t),D_x\rangle\big)e^{-\frac{t}{m}q^w}...\ \big(\langle y_{m}(t),x\rangle+\langle \eta_{m}(t),D_x\rangle\big)e^{-\frac{t}{m}q^w}u\big\|_{L^2}\Big],
\end{align*}
implying that 
\begin{align*}
& \ \|(\langle x_1, x \rangle+\langle \xi_1,D_x \rangle)\ ...\ (\langle x_m, x \rangle+\langle \xi_m,D_x \rangle)e^{-tq^w}u\|_{L^2} \leq  
|\mathcal{J}e^{\frac{2itF}{m}}\mathcal{J}^{-1}X_2|\Big(\frac{m}{t}\Big)^{\frac{2(2k_{0}+1)}{2}}\\
\times & \ 2C^2|X_1| \Big[ \big\|\big(\langle \textrm{Re }y_3(t),x\rangle+\langle \textrm{Re }\eta_3(t),D_x\rangle\big)e^{-\frac{t}{m}q^w} ...  \big(\langle y_{m}(t),x\rangle+\langle \eta_{m}(t),D_x\rangle\big)e^{-\frac{t}{m}q^w}u\big\|_{L^2}\\
+ & \ \big\|\big(\langle \textrm{Im }y_3(t),x\rangle+\langle \textrm{Im }\eta_3(t),D_x\rangle\big)e^{-\frac{t}{m}q^w} ...  \big(\langle y_{m}(t),x\rangle+\langle \eta_{m}(t),D_x\rangle\big)e^{-\frac{t}{m}q^w}u\big\|_{L^2}\Big].
\end{align*}
Iterating these estimates provides that for all 
$m \geq 1$, $X_1=(x_1,\xi_1)  \in \rr^{2n}$, ..., $X_m=(x_m,\xi_m)  \in \rr^{2n}$, $u \in L^2(\rr^n)$, $0<t \leq 1$,
\begin{align}\label{fg1}
& \ \|(\langle x_1, x \rangle+\langle \xi_1,D_x \rangle)\ ...\ (\langle x_m, x \rangle+\langle \xi_m,D_x \rangle)e^{-tq^w}u\|_{L^2}\\ \notag
\leq & \  2^{m-1}C^m\Big(\frac{m}{t}\Big)^{\frac{m(2k_{0}+1)}{2}}|X_1|\Big(\prod_{j=1}^{m-1}|\mathcal{J}e^{\frac{2ijtF}{m}}\mathcal{J}^{-1}X_{j+1}|\Big)\|u\|_{L^2}\\ \notag
\leq & \  2^{m-1}C^m\Big(\frac{m}{t}\Big)^{\frac{m(2k_{0}+1)}{2}}\Big(\prod_{j=1}^{m-1}e^{\frac{2jt\|F\|}{m}}\Big)\Big(\prod_{j=1}^{m}|X_j|\Big)\|u\|_{L^2}\\ \notag
\leq & \ (2Ce^{\|F\|})^m\Big(\frac{m}{t}\Big)^{\frac{m(2k_{0}+1)}{2}}\Big(\prod_{j=1}^{m}|X_j|\Big)\|u\|_{L^2}.
\end{align}
We deduce from the Stirling formula and (\ref{fg1}) that there exists a positive constant $\tilde{c}>1$ such that for all 
$m \geq 1$, $X_1=(x_1,\xi_1)  \in \rr^{2n}$, ..., $X_m=(x_m,\xi_m)  \in \rr^{2n}$, $u \in L^2(\rr^n)$, $0<t \leq 1$,
\begin{multline}\label{fg1444}
\|(\langle x_1, x \rangle+\langle \xi_1,D_x \rangle)\ ...\ (\langle x_m, x \rangle+\langle \xi_m,D_x \rangle)e^{-tq^w}u\|_{L^2} \\
\leq  \tilde{c}^m(m!)^{\frac{2k_{0}+1}{2}}\Big(\prod_{j=1}^{m}|X_j|\Big)t^{-\frac{m(2k_{0}+1)}{2}}\|u\|_{L^2}.
\end{multline}
On the other hand, it follows from the semigroup property and (\ref{fg1444}) that for all 
$m \geq 1$, $X_1=(x_1,\xi_1)  \in \rr^{2n}$, ..., $X_m=(x_m,\xi_m)  \in \rr^{2n}$, $u \in L^2(\rr^n)$, $t \geq 1$,
\begin{align}\label{fg46}
& \ \|(\langle x_1, x \rangle+\langle \xi_1,D_x \rangle)\ ...\ (\langle x_m, x \rangle+\langle \xi_m,D_x \rangle)e^{-tq^w}u\|_{L^2}\\ \notag
= & \ \|(\langle x_1, x \rangle+\langle \xi_1,D_x \rangle)\ ...\ (\langle x_m, x \rangle+\langle \xi_m,D_x \rangle)e^{-q^w}e^{-(t-1)q^w}u\|_{L^2}\\ \notag
\leq & \ \tilde{c}^m(m!)^{\frac{2k_{0}+1}{2}}\Big(\prod_{j=1}^{m}|X_j|\Big)\|e^{-(t-1)q^w}u\|_{L^2}.
\end{align}
We deduce from (\ref{ev1}) and (\ref{fg46}) that there exists a positive constant $\tilde{C}>1$ such that for all 
$m \geq 1$, $X_1=(x_1,\xi_1)  \in \rr^{2n}$, ..., $X_m=(x_m,\xi_m)  \in \rr^{2n}$, $u \in L^2(\rr^n)$, $t \geq 1$,
\begin{align*}
& \ \|(\langle x_1, x \rangle+\langle \xi_1,D_x \rangle)\ ...\ (\langle x_m, x \rangle+\langle \xi_m,D_x \rangle)e^{-tq^w}u\|_{L^2}\\
\leq & \ C_5\tilde{c}^m(m!)^{\frac{2k_{0}+1}{2}}\Big(\prod_{j=1}^{m}|X_j|\Big)e^{-\omega_0 (t-1)}\|u\|_{L^2} \\
\leq & \ \tilde{C}^m(m!)^{\frac{2k_{0}+1}{2}}\Big(\prod_{j=1}^{m}|X_j|\Big)e^{-\omega_0 t}\|u\|_{L^2}.
\end{align*}
This ends the proof of Corollary~\ref{thj}.

\subsection{Proof of Corollary~\ref{th0}}
This section is devoted to the proof of Corollary~\ref{th0}. To that end, we consider the Hilbert spaces 
\begin{equation}\label{jk1}
\mathcal{H}^s=\big\{u \in L^2(\rr^n) : \langle (x,D_x) \rangle^su \in L^2(\rr^n)\big\}, \quad \|u\|_{\mathcal{H}^s}=\|\langle (x,D_x) \rangle^su\|_{L^2}, \quad s  \geq 0,
\end{equation}
with $\|\cdot\|_{L^2}$ the $L^2(\rr^n)$-norm and $\langle (x,D_x) \rangle^2=1+|D_x|^2+|x|^2$, equipped with the dot product
\begin{equation}\label{jk2}
(u,v)_{\mathcal{H}^s}=\big(\langle (x,D_x) \rangle^su,\langle (x,D_x) \rangle^sv\big)_{L^2}.
\end{equation}
Here the operator 
$$\langle (x,D_x) \rangle^s=(1+D_x^2+x^2)^{\frac{s}{2}},$$ 
equipped with the domain 
\begin{equation}\label{dv1}
D(\langle (x,D_x) \rangle^s)=\mathcal{H}^s,
\end{equation}
is defined through the functional calculus of the harmonic oscillator. Notice that this definition coincides with the definition of fractional powers of positive operators given in~\cite{interpolation} (Chapter~4).

Let $q : \rr^{n}_x \times \rr_{\xi}^n \rightarrow \cc$, with $n \geq 1$, be a quadratic form with a non-negative real part $\textrm{Re }q \geq 0$ and a zero singular space $S=\{0\}$. 
Let $0 \leq k_0 \leq 2n-1$ be the smallest integer satisfying
$$\Big(\bigcap_{j=0}^{k_0}\textrm{Ker}\big[\textrm{Re }F(\textrm{Im }F)^j \big]\Big)\cap \rr^{2n}=\{0\}.$$
By using that 
$$\|u\|_{\mathcal{H}^1}^2=\big(\langle (x,D_x) \rangle^2u,u\big)_{L^2}=\|u\|_{L^2}^2+\|xu\|_{L^2}^2+\|D_xu\|_{L^2}^2,$$
it follows from (\ref{qz-1}), (\ref{ev1}) and Theorem~\ref{th} that there exists a positive constant $C>0$ such that 
\begin{equation}\label{ert1}
\forall t>0, \forall u \in L^2(\rr^n), \quad \|e^{-tq^w}u\big\|_{\mathcal{H}^1} \leq \frac{C}{t^{\frac{2k_0+1}{2}}}\|u\|_{L^2}.
\end{equation}
We need the following result of real interpolation:

\medskip

\begin{proposition}\label{lprop}
Let $X$ be a Hilbert space and $A : D(A) \subset X \rightarrow X$ a maximal accretive operator
$$\forall u \in D(A), \quad \emph{\textrm{Re}}(Au,u) \geq 0,$$
such that $(-A,D(A))$ is the generator of a strongly continuous semigroup $T(t)$. Assume that there exists a Banach space $E \subset X$, $\rho>1$, $C>0$ such that 
$$\forall t>0, \quad \|T(t)\|_{\mathcal{L}(X,E)} \leq \frac{C}{t^{\rho}},$$
and that $t \mapsto T(t)u$ is measurable with values in $E$ for each $u \in X$. Then, the continuous inclusion 
$$D(A) \subset (X,E)_{\frac{1}{\rho},2},$$
holds, where $(X,E)_{\frac{1}{\rho},2}$ denotes the space obtained by real interpolation.
\end{proposition}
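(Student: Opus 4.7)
The plan is to use the K-functional characterization of the real interpolation space: $\|u\|_{(X,E)_{1/\rho,2}}^2 \asymp \int_0^\infty (t^{-1/\rho} K(t,u))^2\, dt/t$, where $K(t,u) = \inf\{\|u_0\|_X + t\|u_1\|_E : u = u_0+u_1\}$. For $u \in D(A)$ the semigroup yields a canonical family of decompositions $u = (u - T(\tau)u) + T(\tau)u$ parametrized by $\tau > 0$, and two key bounds control the two pieces. First, the identity $u - T(\tau)u = \int_0^\tau T(s)Au\, ds$ combined with the contractivity $\|T(s)\|_{\mathcal{L}(X)} \leq 1$ (valid since $-A$ generates a contraction semigroup on the Hilbert space $X$, $A$ being maximal accretive) gives $\|u - T(\tau)u\|_X \leq \tau \|Au\|_X$. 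Second, the smoothing hypothesis gives $\|T(\tau)u\|_E \leq C\tau^{-\rho}\|u\|_X$ directly.

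I would then pick $\tau = \tau(t)$ as a function of the K-functional scale to balance these two contributions, and split the analysis into regimes. For $t \geq 1$, the trivial bound $K(t,u) \leq \|u\|_X$ together with $\int_1^\infty t^{-2/\rho - 1}\, dt < \infty$ handles the tail. For $t \leq 1$, the assumption $\rho > 1$ is essential: it forces $1 - 2/\rho > -1$, making the relevant powers of $t$ integrable against $dt/t$ near zero. The required weighted $L^2$ integrability then follows from a Hardy-type weighted inequality applied to the pointwise semigroup bounds --- equivalently, from the Lions-Peetre trace/mean characterization of real interpolation spaces, in which $u$ arises as the trace at $\tau = 0^+$ of the curve $\tau \mapsto T(\tau)u$, with the quantities $\|T(\tau)u\|_E$ and $\|(d/d\tau)T(\tau)u\|_X = \|T(\tau)Au\|_X$ controlling the interpolation norm.

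The main obstacle is the delicate near-zero behavior, where the singular smoothing factor $\tau^{-\rho}$ on the $E$-part must be balanced against the linear vanishing $\tau\|Au\|_X$ on the $X$-part. A naive pointwise optimization of $\tau$ does not immediately produce sharp $L^2$ integrability of $t^{-1/\rho}K(t,u)$ near zero, so I would invoke the systematic abstract machinery of real interpolation theory (as developed for instance in Lunardi, \emph{Interpolation Theory}, Chapter~4) to close the estimate via the trace method together with the Hardy inequality. The outcome is the continuous embedding $D(A) \hookrightarrow (X,E)_{1/\rho,2}$ with the quantitative bound $\|u\|_{(X,E)_{1/\rho,2}} \lesssim \|u\|_X + \|Au\|_X$, which is exactly the form needed to deduce Corollary~\ref{th0} (applied with $X = L^2(\rr^n)$, $E = \mathcal{H}^1$, $A = q^w(x,D_x)$, $\rho = (2k_0+1)/2$, and the isomorphism $(L^2, \mathcal{H}^1)_{\theta, 2} = \mathcal{H}^\theta$).
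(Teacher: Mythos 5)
You should first be aware that the paper itself does not prove Proposition~\ref{lprop}: its entire ``proof'' is the citation to Lunardi's book (Corollary~5.13). So the substance of your proposal is the attempted reconstruction, and that is exactly where there is a genuine gap. Your two estimates $\|u-T(\tau)u\|_X\le\tau\|Au\|_X$ and $\|T(\tau)u\|_E\le C\tau^{-\rho}\|u\|_X$ are correct, as is the tail estimate for $t\ge 1$, but they do not reach the endpoint space $(X,E)_{1/\rho,2}$ by the mechanisms you describe. The one-parameter decomposition $u=(u-T(\tau)u)+T(\tau)u$ gives $K(t,u)\le \tau\|Au\|_X+Ct\,\tau^{-\rho}\|u\|_X$, whose optimization in $\tau$ yields $K(t,u)\lesssim t^{1/(\rho+1)}\left(\|u\|_X+\|Au\|_X\right)$; since $1/(\rho+1)<1/\rho$, the quantity $t^{-1/\rho}K(t,u)$ is then only bounded by $t^{-1/(\rho(\rho+1))}$ near $t=0$ and the integral $\int_0^1\left(t^{-1/\rho}K(t,u)\right)^2\,dt/t$ cannot be controlled. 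The trace/mean method with the curve you propose fares no better: taking $v(t)=T(t^{\alpha})u$, the Hardy-inequality argument requires $t^{1-\theta}\|v(t)\|_E$ and $t^{1-\theta}\|v'(t)\|_X$ to lie in $L^2(dt/t)$ near $0$, which with your bounds forces $\alpha>\theta$ and $\alpha\rho<1-\theta$; these are compatible only for $\theta<1/(\rho+1)$ and are contradictory at $\theta=1/\rho$. Even the sharper splitting $u=T(1)u+\int_0^1T(s)Au\,ds$, estimating $\int_\tau^1T(s)Au\,ds$ in $E$ by $C(\rho-1)^{-1}\tau^{1-\rho}\|Au\|_X$ (this is where $\rho>1$ actually enters, not through the exponent count $1-2/\rho>-1$ you give), only produces $K(t,u)\lesssim t^{1/\rho}\left(\|u\|_X+\|Au\|_X\right)$, i.e.\ membership in $(X,E)_{1/\rho,\infty}$, which is strictly weaker than the asserted second index $2$.

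So the step you describe as ``closing the estimate via the trace method together with the Hardy inequality'' is precisely the missing idea: fed only the pointwise bounds $\tau\|Au\|_X$ and $C\tau^{-\rho}\|u\|_X$, that machinery stops at the weak-type conclusion above, and upgrading $\infty$ to $2$ at the exact exponent $\theta=1/\rho$ requires a genuinely finer argument --- in concrete models it reflects square-function/orthogonality information that is not a consequence of the decay bound via the triangle inequality. In effect your proposal, like the paper, ends by invoking \cite{interpolation}; but whereas the paper simply quotes Corollary~5.13 as a black box, you assert a specific route to it, and that route, as written, does not go through. A complete proof would have to reproduce the actual argument behind the cited corollary (or establish some additional quadratic estimate on the semigroup), neither of which appears in the sketch; the final paragraph's application to Corollary~\ref{th0} is, however, set up with the correct choices of $X$, $E$, $\rho$ and the identification $(L^2,\mathcal{H}^1)_{\theta,2}=\mathcal{H}^{\theta}$.
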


\medskip

We refer the reader to the book~\cite{interpolation} (Corollary~5.13) for a proof of this result.
We consider the degenerate case when $k_0 \geq 1$.
We deduce from (\ref{ert1}) and Proposition~\ref{lprop} that the continuous inclusion 
\begin{equation}\label{jk3}
D(q^w) \subset \big(L^2(\rr^n),\mathcal{H}^1\big)_{\frac{2}{2k_0+1},2},
\end{equation} 
holds. We observe that the Hilbert space $\mathcal{H}^1$ is included and dense in $L^2(\rr^n)$. It follows from~\cite{interpolation} (Corollary~4.37) the following correspondence between real and complex interpolation spaces
\begin{equation}\label{jk4}
\forall 0 <\theta <1, \quad \big[L^2(\rr^n),\mathcal{H}^1\big]_{\theta}=\big(L^2(\rr^n),\mathcal{H}^1\big)_{\theta,2}.
\end{equation}
By using that $\mathcal{H}^1=D(\langle (x,D_x) \rangle)$ is the domain of the selfadjoint operator $\langle (x,D_x) \rangle$, we deduce from (\ref{dv1}) and~\cite{interpolation} (Theorem~4.36) that 
\begin{equation}\label{jk5}
\big[L^2(\rr^n),\mathcal{H}^1\big]_{\theta}=\big[D\big(\langle (x,D_x) \rangle^0\big),D\big(\langle (x,D_x) \rangle^1\big)\big]_{\theta}=D\big(\langle (x,D_x) \rangle^\theta\big)=\mathcal{H}^{\theta},
\end{equation}
for all $0 <\theta <1$. We observe from (\ref{jk3}), (\ref{jk4}) and (\ref{jk5}) that the continuous inclusion 
\begin{equation}\label{jk6}
D(q^w)  \subset \mathcal{H}^{\frac{2}{2k_0+1}},
\end{equation} 
holds. This ends the proof of Corollary~\ref{th0} in the case when $k_0 \geq 1$.

When $k_0=0$, we check that the non-negative quadratic form $\textrm{Re }q$ is in fact positive definite. 
Indeed, the very same arguments as in (\ref{2.3.99}) show that
$$X \in \textrm{Ker}(\textrm{Re }F) \Leftrightarrow \textrm{Re }q(X)=0,$$
when $X \in \rr^{2n}$. When $k_0=0$, the condition (\ref{e2}) therefore reads as
$$\textrm{Ker}(\textrm{Re }F)\cap \rr^{2n}=\{0\}.$$
This implies that the quadratic form $\textrm{Re }q$ is positive definite.
The symbol $q$ is thus elliptic and the result of Corollary~\ref{th0} is then a direct consequence of the ellipticity of the operator $q^w(x,D_x)$, see e.g.~\cite{hor_math} (Lemma~3.1 and Theorem~3.3).

\subsection{Proof of Theorem~\ref{th-1}}
This section is devoted to the proof of Theorem~\ref{th-1}.
Let $q : \rr^{n}_x \times \rr_{\xi}^n \rightarrow \cc$, with $n \geq 1$, be a quadratic form with a non-negative real part $\textrm{Re }q \geq 0$ and a zero singular space $S=\{0\}$. 
Let $0 \leq k_0 \leq 2n-1$ be the smallest integer satisfying
$$\Big(\bigcap_{j=0}^{k_0}\textrm{Ker}\big[\textrm{Re }F(\textrm{Im }F)^j \big]\Big)\cap \rr^{2n}=\{0\}.$$
Let $0 \leq k \leq k_0$. We begin by noticing from (\ref{rg0}), (\ref{e5}) and Lemma~\ref{lem1} that there exists a positive constant $c_1>0$ such that for all $0 \leq t \leq 1$, $X \in \rr^{2n}$, 
$$Q_t(X) \geq c_1 t^{2k+1}r_k(X).$$
It follows from Lemma~\ref{lem4} that 
\begin{equation}\label{rg5}
\forall u \in \mathscr{S}(\rr^n), \quad (Q_t^w(x,D_x)u,u)_{L^2} \geq c_1 t^{2k+1}(r_k^w(x,D_x)u,u)_{L^2} \geq 0.
\end{equation}
We deduce from (\ref{conti}), (\ref{e30}), (\ref{e47}) and (\ref{rg5}) that for all $0 \leq t \leq 1$, $u \in \mathscr{S}(\rr^n)$,
\begin{equation}\label{rg6}
0 \leq c_1 t^{2k+1}(r_k^w(x,D_x)e^{-\frac{t}{2}q^w}u,e^{-\frac{t}{2}q^w}u)_{L^2}\leq (C+C_2)\|u\|_{L^2}^2.
\end{equation}
On the other hand, we have for all $0 \leq t \leq 1$, $u \in \mathscr{S}(\rr^n)$,
\begin{equation}\label{rg7}
0 \leq c_1 t^{2k+1}\|e^{-\frac{t}{2}q^w}u\|_{L^2}^2\leq c_1\|u\|_{L^2}^2,
\end{equation}
since $(e^{-tq^w})_{t \geq 0}$ is a contraction semigroup on $L^2$. It follows from (\ref{rg1}), (\ref{rg6}) and (\ref{rg7}) that for all $0 \leq t \leq 1$, $u \in \mathscr{S}(\rr^n)$,
\begin{equation}\label{rg6.cc}
0 \leq c_1 t^{2k+1}(\Lambda_k^2e^{-\frac{t}{2}q^w}u,e^{-\frac{t}{2}q^w}u)_{L^2}\leq (c_1+C+C_2)\|u\|_{L^2}^2,
\end{equation}
implying that
\begin{equation}\label{rg8}
\exists c_2>0, \forall 0 < t \leq 1, \forall u \in \mathscr{S}(\rr^n), \quad  \|\Lambda_k e^{-\frac{t}{2}q^w}u\|_{L^2}\leq \frac{c_2}{t^{\frac{2k+1}{2}}}\|u\|_{L^2}.
\end{equation}
We observe from (\ref{ev1}), (\ref{rg8}) and the semigroup property that for all $u \in \mathscr{S}(\rr^n)$, $t \geq 1$,
\begin{multline}\label{rg9}
\|\Lambda_k e^{-\frac{t}{2}q^w}u\|_{L^2}=\|\Lambda_ke^{-\frac{1}{2}q^w}e^{-\frac{1}{2}(t-1)q^w}u\|_{L^2} \\
\leq c_2\|e^{-\frac{1}{2}(t-1)q^w}u\|_{L^2}\leq c_2C_5e^{-\frac{\omega_0}{2}(t-1)}\|u\|_{L^2}.
\end{multline}
We consider the Hilbert spaces
\begin{equation}\label{jk1.t}
\mathscr{H}_k=\big\{u \in L^2(\rr^n) : \Lambda_ku \in L^2(\rr^n)\big\}, \quad \|u\|_{\mathscr{H}_k}=\|\Lambda_ku\|_{L^2}, \quad 0 \leq k \leq k_0,
\end{equation}
equipped with the dot product
\begin{equation}\label{jk2.t}
(u,v)_{\mathscr{H}_k}=(\Lambda_ku,\Lambda_kv)_{L^2}.
\end{equation} 
It follows from (\ref{qz-1}), (\ref{rg8}), (\ref{rg9}) and (\ref{jk1.t}) that for all $0 \leq k \leq k_0$, 
\begin{equation}\label{rg10}
\exists C_k>0, \forall t>0, \forall u \in \mathscr{S}(\rr^n), \quad  \|e^{-tq^w}u\|_{\mathscr{H}_k}=\|\Lambda_k e^{-tq^w}u\|_{L^2}\leq \frac{C_k}{t^{\frac{2k+1}{2}}}\|u\|_{L^2}.
\end{equation}
We observe that the Hilbert spaces $L^2(\rr^n)$ and $\mathscr{H}_k$ satisfy $\mathscr{H}_k \subset L^2(\rr^n)$, $\mathscr{H}_k$ is dense in $L^2(\rr^n)$. It follows from~\cite{interpolation} (Corollary~4.37) the following correspondence between real and complex interpolation spaces
\begin{equation}\label{jk4.t}
\forall 0 \leq k \leq k_0, \forall 0 <\theta <1, \quad \big[L^2(\rr^n),\mathscr{H}_k\big]_{\theta}=\big(L^2(\rr^n),\mathscr{H}_k\big)_{\theta,2}.
\end{equation}
By using that $\mathscr{H}_k=D(\Lambda_k)$ is the domain of the operator $\Lambda_k$ and that $\Lambda_k^2$ is a positive selfadjoint operator, we deduce from~\cite{interpolation} (Theorem~4.36) that for all $0 \leq k \leq k_0$, $0 <\theta <1$,
\begin{equation}\label{jk5.t}
\big[L^2(\rr^n),\mathscr{H}_k\big]_{\theta}=\big[D\big((\Lambda_k^2)^0\big),D\big((\Lambda_k^2)^{\frac{1}{2}}\big)\big]_{\theta}=D\big((\Lambda_k^2)^{\frac{\theta}{2}}\big)=D(\Lambda_k^{\theta}).
\end{equation}
When $k \geq 1$, we deduce from (\ref{rg10}) and~Proposition~\ref{lprop} that the continuous inclusion
\begin{equation}\label{jk3.t}
D(q^w) \subset \big(L^2(\rr^n),\mathscr{H}_k\big)_{\frac{2}{2k+1},2},
\end{equation} 
holds. We therefore obtain from (\ref{jk4.t}), (\ref{jk5.t}) and (\ref{jk3.t}) that the continuous inclusion
\begin{equation}\label{jk6.t}
\forall k \geq 1, \quad D(q^w)  \subset D(\Lambda_k^{\frac{2}{2k+1}}),
\end{equation} 
holds. This implies that there exists a positive constant $c>0$ such that 
\begin{equation}\label{yul1}
\forall 1 \leq k \leq k_0, \forall u \in D(q^w), \quad \|\Lambda_k^{\frac{2}{2k+1}}u\|_{L^2} \leq c(\|q^w(x,D_x)u\|_{L^2}+\|u\|_{L^2}).
\end{equation}
On the other hand, we deduce from (\ref{rg0}) and (\ref{rg1}) that 
\begin{equation}\label{wd1}
\forall u \in D(q^w), \quad \textrm{Re}(q^w(x,D_x)u+u,u)_{L^2}=(\Lambda_0^2u,u)_{L^2}=\|\Lambda_0u\|_{L^2}^2.
\end{equation}
It follows from (\ref{wd1}) and the Cauchy-Schwarz inequality that for all $u \in D(q^w)$, 
\begin{equation}\label{wd1.5}
\|\Lambda_0u\|_{L^2}^2 \leq \|q^w(x,D_x)u+u\|_{L^2}\|u\|_{L^2}\\ \leq \|q^w(x,D_x)u\|_{L^2}\|u\|_{L^2}+\|u\|_{L^2}^2,
\end{equation}
that is
\begin{equation}\label{wd1.6}
\exists C_0>0, \forall u \in D(q^w), \quad \|\Lambda_0u\|_{L^2} \leq C_0(\|q^w(x,D_x)u\|_{L^2}+\|u\|_{L^2}).
\end{equation}
Gathering the estimates (\ref{yul1}) and (\ref{wd1.6}) provides the result of Theorem~\ref{th-1}.
This ends the proof of Theorem~\ref{th-1}.

\subsection{Proof of Corollary~\ref{thj_symp}}
This section is devoted to the proof of Corollary~\ref{thj_symp}. To that end, we begin by noticing that both vector subspaces $S$ and $S^{\sigma \perp}$ are stable by the real and imaginary parts $\textrm{Re }F$ and $\textrm{Im }F$ of the Hamilton map $F$. Indeed, it follows from the Cayley-Hamilton theorem and the definition (\ref{h1bis}) that the singular space is equal to the following infinite intersection of kernels
$$S=\Big(\bigcap_{j=0}^{+\infty}\textrm{Ker}
\big[\textrm{Re }F(\textrm{Im }F)^j \big]\Big)\cap \rr^{2n}.$$
With this description, we readily check that the vector subspace $S$ is stable by the real and imaginary parts $\textrm{Re }F$ and $\textrm{Im }F$ of the Hamilton map $F$, 
\begin{equation}\label{vb1}
(\textrm{Re }F) S=\{0\}, \quad (\textrm{Im }F)S \subset S.
\end{equation}
By duality, we also notice from (\ref{a1}) that these stability properties hold as well for the vector subspace~$S^{\sigma \perp}$,
\begin{equation}\label{vb2}
(\textrm{Re }F) S^{\sigma \perp} \subset S^{\sigma \perp}, \quad (\textrm{Im }F)S^{\sigma \perp} \subset S^{\sigma \perp}.
\end{equation}
When the singular space has a symplectic structure, the vector space $S^{\sigma \perp}$ has also necessarily a symplectic structure and the phase space can be split into the direct sum of the two symplectically orthogonal spaces $S$ and $S^{\sigma \perp}$,
$$\rr^{2n}=S\oplus^{\sigma \perp} S^{\sigma \perp}, \quad \textrm{dim}_{\rr}S^{\sigma \perp}=2n', \quad  \textrm{dim}_{\rr}S=2n'', \quad n=n'+n''.$$
In the case when the singular space $S= \rr^{2n}$ is equal to the whole phase space, the result of Corollary~\ref{thj_symp} is trivial as $S^{\sigma \perp}=\{0\}$. We therefore assume from now that the singular space $S \neq \rr^{2n}$ is distinct from the whole phase space. It implies in particular that $n' \geq 1$.
By using this splitting of the phase space
\begin{equation}\label{evg22}
X=X'+X'', \quad X=(x,\xi)\in \rr^{2n}, \quad X' \in S^{\sigma \perp}, \quad X'' \in S,
\end{equation} 
we deduce from definition (\ref{10}), (\ref{vb1}), (\ref{vb2}) and the symplectic orthogonality that
\begin{multline*}
q(X)=\sigma(X,FX)=\sigma\big(X'+X'',F(X'+X'')\big)=\sigma(X',FX')+\sigma(X'',FX'')\\
=\sigma(X',F|_{S^{\sigma \perp}}X')+i\sigma\big(X'',(\textrm{Im }F)|_{S}X''\big).
\end{multline*}
It follows that the quadratic form $q$ is equal to the sum of a  purely imaginary-valued quadratic form defined on $S$ and another one defined on~$S^{\sigma \perp}$ with a non-negative real part
\begin{equation}\label{evg23}
q=q|_{S^{\sigma \perp}}+i (\textrm{Im }q)|_S.
\end{equation}
We notice that the quadratic form $q|_{S^{\sigma \perp}}$ satisfies the assumptions of Corollary~\ref{thj} since $S \cap S^{\sigma \perp}=\{0\}$. On the other hand, we deduce from the symplectic structure of the two vector subspaces $S$ and $S^{\sigma \perp}$ that the two operators $q|_{S^{\sigma \perp}}^w$ and $(\textrm{Im }q)|_S^w$ commute. We denote $(e_1,...,e_n,\eps_1,...,\eps_n)$ the canonical symplectic basis of the phase space
$$(x,\xi)=x_1e_1+...+x_ne_n+\xi_1\eps_1+...+\xi_n \eps_n \in \rr_x^n \times \rr_{\xi}^n.$$
Let $(\tilde{e}_1,...,\tilde{e}_{n'},\tilde{\eps}_1,...,\tilde{\eps}_{n'})$ be a symplectic basis of $S^{\sigma \perp}$ and $(\tilde{e}_{n'+1},...,\tilde{e}_{n'+n''},\tilde{\eps}_{n'+1},...,\tilde{\eps}_{n'+n''})$ a symplectic basis of $S$, 
$$\sigma(\tilde{e}_j,\tilde{e}_k)=\sigma(\tilde{\eps}_j,\tilde{\eps}_k)=0, \quad \sigma(\tilde{\eps}_j,\tilde{e}_k)=\delta_{j,k},$$
with $\delta_{j,k}$ being the Kronecker symbol. We consider $\chi : \rr^{2n} \rightarrow \rr^{2n}$ the linear symplectic transformation satisfying 
$$\chi(\tilde{e}_j)=e_j, \quad \chi(\tilde{\eps}_j)=\eps_j, \quad 1 \leq j \leq n.$$
By the symplectic invariance of the Weyl quantization~\cite{hormander} (Theorem 18.5.9), we can find a metaplectic operator $\mathcal{T}$,
which is a unitary transformation of $\lde$ and an automorphism of the Schwartz space $\mathscr{S}(\rr^n)$ and of the space of tempered distributions $\mathscr{S}'(\rr^n)$ such that for all $a \in \mathscr{S}'(\rr^{2n})$,
\begin{equation}\label{inf3_1}
(a \circ \chi^{-1})^w(x,D_x)=\mathcal{T}^{-1}a^w(x,D_x) \mathcal{T}.
\end{equation}
According to definition (\ref{10}), the Hamilton map of the quadratic form 
$$(x',\xi') \mapsto \tilde{q}|_{S^{\sigma \perp}}(x',\xi')=(q|_{S^{\sigma \perp}} \circ \chi^{-1})(x',0'';\xi',0''),$$ 
is given by $\chi \circ F|_{S^{\sigma \perp}} \circ \chi^{-1}$. We readily check that this quadratic form satisfies the assumptions of Corollary~\ref{thj}. Furthermore, we observe that the smallest integer $0 \leq k_0 \leq 2n'-1$  satisfying
$$\Big(\bigcap_{j=0}^{k_0}\textrm{Ker}
\big[\textrm{Re }F|_{S^{\sigma \perp}}(\textrm{Im }F|_{S^{\sigma \perp}})^j \big]\Big)\cap \rr^{2n'}=\{0\},$$
is also the  smallest integer $0 \leq k_0 \leq 2n'-1$  satisfying
$$\Big(\bigcap_{j=0}^{k_0}\textrm{Ker}
\big[\textrm{Re}\big(\chi|_{S^{\sigma \perp}}\circ F|_{S^{\sigma \perp}}\circ (\chi|_{S^{\sigma \perp}})^{-1}\big)\big(\textrm{Im}\big(\chi|_{S^{\sigma \perp}}\circ F|_{S^{\sigma \perp}}\circ(\chi|_{S^{\sigma \perp}})^{-1}\big)\big)^j \big]\Big)\cap \rr^{2n'}=\{0\}.$$
In the following, this integer is denoted $k_0$. We also notice that 
$$\sigma\big(\chi|_{S^{\sigma \perp}} \circ F|_{S^{\sigma \perp}} \circ (\chi|_{S^{\sigma \perp}})^{-1}\big)=\sigma\big(F|_{S^{\sigma \perp}}\big).$$
By applying the results of Corollary~\ref{thj}, we obtain that there exists a positive constant $C>1$ such that for all $m \geq 1$, $X'_1=(x_1',\xi_1')  \in \rr^{2n'}$, ..., $X_m'=(x_m',\xi_m')  \in \rr^{2n'}$, $u \in L^2(\rr^{n'})$,
\begin{multline}\label{gh1}
\forall 0<t \leq 1, \quad \big\|(\langle x_1', x' \rangle+\langle \xi_1',D_{x'} \rangle)\ ...\ (\langle x_m', x' \rangle+\langle \xi_m',D_{x'} \rangle) e^{-t\tilde{q}|_{S^{\sigma \perp}}^w}u\big\|_{L^2(\rr^{n'})}\\
\leq C^{m}(m!)^{\frac{2k_0+1}{2}}\Big(\prod_{j=1}^m|X_j'|\Big)t^{-\frac{(2k_0+1)m}{2}}\|u\|_{L^2(\rr^{n'})},
\end{multline}
\begin{multline}\label{gh2}
\forall t \geq 1, \quad \big\|(\langle x_1', x' \rangle+\langle \xi_1',D_{x'} \rangle)\ ...\ (\langle x_m', x' \rangle+\langle \xi_m',D_{x'} \rangle) e^{-t\tilde{q}|_{S^{\sigma \perp}}^w}u\big\|_{L^2(\rr^{n'})}\\
\leq C^m(m!)^{\frac{2k_0+1}{2}}\Big(\prod_{j=1}^m|X_j'|\Big)e^{-\omega_0t}\|u\|_{L^2(\rr^{n'})},
\end{multline}
with 
$$\omega_0=\sum_{\substack{\lambda \in \sigma(F|_{S^{\sigma \perp}}) \\
-i \lambda \in \cc_+}}r_{\lambda}\textrm{Re}(-i\lambda)>0, \qquad \cc_+=\{z \in \cc : \textrm{Re }z>0\},$$
where $r_{\lambda}$ stands for the dimension of the space of generalized eigenvectors of $F|_{S^{\sigma \perp}}$ associated to the eigenvalue $\lambda \in \sigma(F|_{S^{\sigma \perp}})$ as this dimension is the same for 
$$\chi|_{S^{\sigma \perp}} \circ F|_{S^{\sigma \perp}} \circ (\chi|_{S^{\sigma \perp}})^{-1}.$$ 
Setting
$$(x'',\xi'') \mapsto (\textrm{Im }\tilde{q})|_{S}(x'',\xi'')=\big((\textrm{Im } q)|_{S} \circ \chi^{-1}\big)(0',x'';0',\xi'')$$
and
$$(x,\xi) \mapsto \tilde{q}(x,\xi)=(q \circ \chi^{-1})(x,\xi),$$
we deduce from the commutation of the two differential operators $\tilde{q}|_{S^{\sigma \perp}}^w$ and $(\textrm{Im}\tilde{q})|_{S}^w$ that 
\begin{multline*}
(\langle x_1', x' \rangle+\langle \xi_1',D_{x'} \rangle)\ ...\ (\langle x_m', x' \rangle+\langle \xi_m',D_{x'} \rangle) e^{-t\tilde{q}^w}\\
=e^{-it(\textrm{Im}\tilde{q})|_{S}^w}(\langle x_1', x' \rangle+\langle \xi_1',D_{x'} \rangle)\ ...\ (\langle x_m', x' \rangle+\langle \xi_m',D_{x'} \rangle) e^{-t\tilde{q}|_{S^{\sigma \perp}}^w}.
\end{multline*}
By using that $(e^{-it(\textrm{Im}\tilde{q})|_{S}^w})_{t \geq 0}$ is a contraction semigroup on $L^2$ as the quadratic symbol $i(\textrm{Im}\tilde{q})|_{S}$ of its generator has a non-negative real part\footnote{It is actually a unitary group as $(\textrm{Im}\tilde{q})|_{S}^w$ is a selfadjoint operator on $L^2$.}, we deduce from (\ref{gh1}) and (\ref{gh2}) that 
for all $m \geq 1$, $X_1'=(x_1',\xi_1')  \in \rr^{2n'}$, ..., $X_m'=(x_m',\xi_m')  \in \rr^{2n'}$, $u \in L^2(\rr^{n})$,
\begin{multline}\label{gh1b}
\forall 0<t \leq 1, \quad \big\|(\langle x_1', x' \rangle+\langle \xi_1',D_{x'} \rangle)\ ...\ (\langle x_m', x' \rangle+\langle \xi_m',D_{x'} \rangle) e^{-t\tilde{q}^w}u\big\|_{L^2(\rr^{n})}\\
\leq C^{m}(m!)^{\frac{2k_0+1}{2}}\Big(\prod_{j=1}^m|X_j'|\Big)t^{-\frac{(2k_0+1)m}{2}}\|u\|_{L^2(\rr^{n})},
\end{multline}
\begin{multline}\label{gh2b}
\forall t \geq 1, \quad \big\|(\langle x_1', x' \rangle+\langle \xi_1',D_{x'} \rangle)\ ...\ (\langle x_m', x' \rangle+\langle \xi_m',D_{x'} \rangle) e^{-t\tilde{q}^w}u\big\|_{L^2(\rr^{n})}\\
\leq C^m(m!)^{\frac{2k_0+1}{2}}\Big(\prod_{j=1}^m|X_j'|\Big)e^{-\omega_0t}\|u\|_{L^2(\rr^{n})}.
\end{multline}
By using the symplectic invariance (\ref{inf3_1}), we notice that 
$$e^{-t\tilde{q}^w}=\mathcal{T}^{-1}e^{-tq^w}\mathcal{T}$$
and 
\begin{multline*}
\langle x_k', x' \rangle+\langle \xi_k',D_{x'} \rangle=\langle (x_k',0'';\xi_k',0''), (x,D_x) \rangle=\mathcal{T}^{-1}\langle (x_k',0'';\xi_k',0''), \chi^w(x,D_x) \rangle\mathcal{T}\\
=\mathcal{T}^{-1}\langle \chi(\tilde{X}_k), \chi^w(x,D_x) \rangle\mathcal{T},
\end{multline*}
with $\tilde{X}_k=\chi^{-1}(x_k',0'';\xi_k',0'') \in S^{\sigma \perp}$, it follows from (\ref{gh1b}) and (\ref{gh2b}) that there exists a positive constant $C_0>1$ for all $m \geq 1$, $X_1 \in S^{\sigma \perp}$, ..., $X_m \in S^{\sigma \perp}$, $u \in L^2(\rr^{n})$,
\begin{multline}\label{gh1br}
\forall 0<t \leq 1, \quad \big\|\langle \chi(X_1), \chi^w(x,D_x) \rangle\ ...\ \langle \chi(X_m), \chi^w(x,D_x) \rangle e^{-tq^w}u\big\|_{L^2(\rr^{n})}\\
\leq C_0^{m}(m!)^{\frac{2k_0+1}{2}}\Big(\prod_{j=1}^m|X_j|\Big)t^{-\frac{(2k_0+1)m}{2}}\|u\|_{L^2(\rr^{n})},
\end{multline}
\begin{multline}\label{gh2br}
\forall t \geq 1, \quad \big\|\langle \chi(X_1), \chi^w(x,D_x) \rangle\ ...\ \langle \chi(X_m), \chi^w(x,D_x) \rangle e^{-tq^w}u\big\|_{L^2(\rr^{n})}\\
\leq C_0^m(m!)^{\frac{2k_0+1}{2}}\Big(\prod_{j=1}^m|X_j|\Big)e^{-\omega_0t}\|u\|_{L^2(\rr^{n})}.
\end{multline}
This ends the proof of Corollary~\ref{thj_symp}.

\section{Some applications}\label{applications}

In this section, we provide some applications of the above results about general quadratic operators to the specific study of degenerate hypoelliptic Ornstein-Uhlenbeck operators and  degenerate hypoelliptic Fokker-Planck operators. We show in particular how our results allow to recover some properties of degenerate hypoelliptic Ornstein-Uhlenbeck operators proven by Farkas and Lunardi in the work~\cite{lunardi1}, and how they relate to the recent results of Arnold and Erb~\cite{anton} on degenerate hypoelliptic Fokker-Planck operators with linear drift.

\subsection{Applications to degenerate hypoelliptic Ornstein-Uhlenbeck operators}\label{orn}
We consider Ornstein-Uhlenbeck operators
\begin{equation}\label{jen0}
P=\frac{1}{2}\sum_{i,j=1}^{n}q_{i,j}\partial_{x_i,x_j}^2+\sum_{i,j=1}^nb_{i,j}x_j\partial_{x_i}=\frac{1}{2}\textrm{Tr}(Q\nabla_x^2)+\langle Bx,\nabla_x\rangle, \quad x \in \rr^n,
\end{equation}
where $Q=(q_{i,j})_{1 \leq i,j \leq n}$ and $B=(b_{i,j})_{1 \leq i,j \leq n}$ are real $n \times n$-matrices, with $Q$ symmetric positive semidefinite. We denote $\langle A,B \rangle$ and $|A|^2$ the scalar operators
\begin{equation}\label{not}
\langle A,B\rangle=\sum_{j=1}^nA_jB_j, \quad |A|^2=\langle A,A\rangle=\sum_{j=1}^nA_j^2,
\end{equation}
when $A=(A_1,...,A_n)$ and $B=(B_1,...,B_n)$ are vector-valued operators. Notice that $\langle A,B\rangle\neq\langle B,A\rangle$ in general, since e.g., 
$\langle \nabla_x,Bx\rangle=\langle Bx,\nabla_x\rangle +\textrm{Tr}(B).$

We study degenerate hypoelliptic Ornstein-Uhlenbeck operators when the symmetric matrix $Q$ is possibly not positive definite. 
These degenerate operators have been studied in the recent works~\cite{lanco,farkas,lunardi1,lanco2,Lorenzi,Metafune_al2002,OPPS2}. We recall from these works that 
the assumption of hypoellipticity is classically characterized by the following equivalent assertions:

\medskip
 
\begin{itemize}
\item[$(i)$] The Ornstein-Uhlenbeck operator $P$ is hypoelliptic
\item[$(ii)$] The symmetric positive semidefinite matrices
\begin{equation}\label{pav0}
Q_t=\int_0^{t}e^{sB}Qe^{sB^T}ds,
\end{equation}
with $B^T$ the transpose matrix of $B$, are nonsingular for some (equivalently, for all) $t>0$, i.e. $\det Q_t>0$
\item[$(iii)$] The Kalman rank condition holds: 
\begin{equation}\label{kal1}
\textrm{Rank}[B|Q^{\frac{1}{2}}]=n,
\end{equation} 
where 
$$[B|Q^{\frac{1}{2}}]=[Q^{\frac{1}{2}},BQ^{\frac{1}{2}},\dots, B^{n-1}Q^{\frac{1}{2}}],$$ 
is the $n\times n^2$ matrix obtained by writing consecutively the columns of the matrices $B^jQ^{\frac{1}{2}}$, with $Q^{\frac{1}{2}}$ the symmetric positive semidefinite matrix given by the square root of $Q$
\item[$(iv)$] The H\"ormander condition holds:
$$\forall x \in \rr^n, \quad \textrm{Rank } \mathcal{L}(X_1,X_2,...,X_n,Y_0)(x)=n,$$
with 
$$Y_0=\langle Bx,\nabla_x\rangle, \quad X_i=\sum_{j=1}^nq_{i,j}\partial_{x_j}, \quad i=1,...,n,$$
where $ \mathcal{L}(X_1,X_2,...,X_n,Y_0)(x)$ denotes the Lie algebra generated by the vector fields $X_1$, $X_2$, ..., $X_n$, $Y_0$, at point $x \in \rr^n$ 
 \end{itemize}

\medskip

\noindent
When the Ornstein-Uhlenbeck operator is hypoelliptic, that is, when one (equivalently, all) of the above conditions holds, the associated Markov semigroup $(T(t))_{t \geq 0}$ has the following explicit representation
\begin{equation}\label{gen}
(T(t)f)(x)=\frac{1}{(2\pi)^{\frac{n}{2}}\sqrt{\det Q_t}}\int_{\rr^n}e^{-\frac{1}{2}\langle Q_t^{-1}y,y\rangle}f(e^{tB}x-y)dy, \quad t>0.
\end{equation}
This formula is due to Kolmogorov~\cite{kolmo}. On the other hand, the existence of an invariant measure $\mu$ for the Markov semigroup $(T(t))_{t \geq 0}$, that is, a probability measure on $\rr^n$ verifying 
$$\forall t \geq 0, \forall f \in C_b(\rr^n), \quad \int_{\rr^n}(T(t)f)(x)d\mu(x)=\int_{\rr^n}f(x)d\mu(x),$$
where $C_b(\rr^n)$ stands for the space of continuous and bounded functions on $\rr^n$, is known to be equivalent~\cite{DaPrato} (Section~11.2.3)
to the following localization of the spectrum of~$B$,
\begin{equation}\label{pav1}
\sigma(B) \subset \mathbb{C}_-=\{z \in \mathbb{C} : \textrm{Re }z<0\}.
\end{equation}
When this condition holds, the invariant measure is unique and is given by $d\mu(x)=\rho(x)dx$, where the density with respect to the Lebesgue measure is
\begin{equation}\label{pav2}
\rho(x)=\frac{1}{(2\pi)^{\frac{n}{2}}\sqrt{\det Q_{\infty}}}e^{-\frac{1}{2}\langle Q_{\infty}^{-1}x,x\rangle},
\end{equation}
with 
\begin{equation}\label{pav3}
Q_{\infty}=\int_0^{+\infty}e^{sB}Qe^{sB^T}ds.
\end{equation}

In the work~\cite{lunardi1} (Proposition~2), Farkas and Lunardi obtain sharp $L^2(\rr^n,d\mu)$-estimates for the spatial derivatives of the semigroup $T(t)f$ for short times $0<t <1$. We begin by recalling this result:

Let $P$ be the Ornstein-Uhlenbeck operator defined in (\ref{jen0}). We assume that $P$ is hypoelliptic and that it admits the invariant measure $d\mu(x)=\rho(x)dx$. 
We consider the Ornstein-Uhlenbeck operator acting on the space $L^2_{\mu}=L^2(\rr^n,d\mu)$, equipped with the domain
\begin{equation}\label{xc5}
D(P)=\{u \in L_{\mu}^2 : Pu \in L_{\mu}^2\}.
\end{equation}
We define $(\mathcal{V}_k)_{k \geq 0}$ the vector subspaces
\begin{equation}\label{jen2.1}
\mathcal{V}_k=\big(\textrm{Ran}(Q^{\frac{1}{2}})+\textrm{Ran}(BQ^{\frac{1}{2}})+...+\textrm{Ran}(B^kQ^{\frac{1}{2}})\big) \cap \rr^n \subset \rr^n, \quad k \geq 0,
\end{equation} 
where the notation $\textrm{Ran}$ denotes the range. The Kalman rank condition
$$\textrm{Rank}[Q^{\frac{1}{2}},BQ^{\frac{1}{2}},\dots, B^{n-1}Q^{\frac{1}{2}}]=n,$$
allows one to consider $0 \leq k_0 \leq n-1$ the smallest integer satisfying
\begin{equation}\label{kal2}
\textrm{Rank}[Q^{\frac{1}{2}},BQ^{\frac{1}{2}},\dots, B^{k_0}Q^{\frac{1}{2}}]=n.
\end{equation}
We observe from (\ref{jen2.1}) and (\ref{kal2}) that the following strict inclusions  
\begin{equation}\label{byebye}
\mathcal{V}_0 \subsetneq \mathcal{V}_1 \subsetneq ... \subsetneq \mathcal{V}_{k_0}=\rr^n,
\end{equation}
hold. The work by Lanconelli and Polidoro~\cite{lanco2} shows that a fan orthonormal basis
\begin{equation}\label{ui0}
\mathcal{B}=(e_1,...,e_n), \quad  \mathcal{V}_k=\textrm{Span}\{e_j : 1 \leq j \leq \textrm{dim }\mathcal{V}_k\}, \quad 0 \leq k \leq k_0,
\end{equation} 
for the subspaces $\mathcal{V}_0,...,\mathcal{V}_{k_0}$ may be chosen so that the Ornstein-Uhlenbeck operator writes in these new coordinates as 
\begin{equation}\label{jen1}
\tilde{P}=\sum_{i,j=1}^{p_0}\tilde{a}_{i,j}\partial_{x_i,x_j}^2+\sum_{i,j=1}^nc_{i,j}x_j\partial_{x_i},
\end{equation}
with $1 \leq p_0 \leq n$, where $\tilde{A}=(\tilde{a}_{i,j})_{1 \leq i,j \leq p_0} \in M_{p_0}(\rr)$ is symmetric positive definite and $C=(c_{i,j})_{1 \leq i,j \leq n}$ has the block structure
$$C=\left( \begin{array}{ccccc}
* & * & \cdots & * & * \\
C_1 & * & \cdots & * & * \\
0 & C_2 & \ddots & * & * \\
\vdots & \ddots & \ddots & * & * \\
0 & \cdots & 0 & C_{k_0} & * \\
\end{array} \right)\in M_{n}(\rr),$$
where $C_j$ is a $p_j \times p_{j-1}$ block with rank $p_j$ for all $j=1,...,k_0$ satisfying
$$p_0 \geq p_1 \geq p_2 \geq ... \geq p_{k_0} \geq 1,$$
where 
$$p_0=\textrm{dim }\mathcal{V}_0, \quad p_i=\textrm{dim }\mathcal{V}_i-\textrm{dim }\mathcal{V}_{i-1}, \quad 1 \leq i \leq k_0.$$
We consider the sets of indices
\begin{equation}\label{ui2}
I_0=\{j \in \mathbb{N} : 1\leq j \leq \textrm{dim }\mathcal{V}_0\}, \quad I_k=\{j \in \mathbb{N} : \textrm{dim }\mathcal{V}_{k-1}+1\leq j \leq \textrm{dim }\mathcal{V}_k\},
\end{equation}
for $1 \leq k \leq k_0$, providing the partition
$$I_0 \sqcup I_1 \sqcup .... \sqcup I_{k_0}=\{1,...,n\}.$$
By denoting $\tilde{\partial}_j$ the partial derivative in the direction $e_j$ given by the basis (\ref{ui0}), the result of~\cite{lunardi1} (Proposition~2) shows that 
\begin{multline}\label{we10}
\forall N \geq 1, \exists c_N>0, \forall f \in L^2(\rr^n,d\mu), \forall j_1 \in I_{\nu_1}, ....,\forall j_N \in I_{\nu_N}, \forall 0<t<1, \\ \|\tilde{\partial}_{j_1}...\tilde{\partial}_{j_N}T(t)f\|_{L_{\mu}^2} \leq \frac{c_N}{t^{\frac{N}{2}+\nu_1+...+\nu_N}}\|f\|_{L^2_{\mu}}, 
\end{multline}
where the sets of indices $I_{\nu_1}$, ..., $I_{\nu_N}$, with $0 \leq \nu_1,...,\nu_N \leq k_0$, are defined in (\ref{ui2}) and where $\|\cdot\|_{L^2_{\mu}}$ denotes the $L^2(\rr^n,d\mu)$-norm. This result well points out how the short-time asymptotics of the regularizing effect depend on the directions of the derivatives. 
In the case when $N=1$, the estimates may rephrased as follows.

According to (\ref{byebye}), we can consider for any $\xi_0 \in \rr^n$ the smallest integer $0 \leq k \leq k_0$ 
such that $\xi_0 \in \mathcal{V}_k$. This integer is denoted $k_{\xi_0}$ and is called the frequency index of the point $\xi_0 \in \rr^n$ with respect to the Ornstein-Uhlenbeck operator $P$.

The estimate (\ref{we10}) reads as 
\begin{equation}\label{we11}
\exists c>0, \forall f \in L^2(\rr^n,d\mu), \forall 0<t<1, \quad \|\langle \xi_0,D_x\rangle T(t)f\|_{L_{\mu}^2} \leq \frac{c}{t^{\frac{2k_{\xi_0}+1}{2}}}\|f\|_{L^2_{\mu}}. 
\end{equation}
We explain in the following how Theorem~\ref{th} allows us to recover the estimates (\ref{we11}) and to derive further decay estimates for $T(t)f$.

To that end, we may associate to the operator $P$ acting on $L^2_{\mu}=L^2(\rr^n,d\mu)$, the quadratic operator $\mathscr{L}$ acting on $L^2=L^2(\rr^n,dx)$,
\begin{equation}\label{pav4.5}
\mathscr{L}u=-\sqrt{\rho}P\big((\sqrt{\rho})^{-1}u\big)-\frac{1}{2}\textrm{Tr}(B)u.
\end{equation}
We notice that the localization of the spectrum (\ref{pav1}) implies that 
\begin{equation}\label{conti5}
\textrm{Tr}(B)<0,
\end{equation}
since $B \in M_n(\rr)$. 
Recalling the notation (\ref{not}), a direct computation led in the work~\cite{OPPS2} (see (3.7) in Section~3.1) shows that
\begin{multline}\label{pav5}
\mathscr{L}=-\frac{1}{2}|Q^{\frac{1}{2}}\nabla_x|^2+\frac{1}{8}|Q^{\frac{1}{2}}Q_{\infty}^{-1}x|^2-\Big\langle\Big(\frac{1}{2}QQ_{\infty}^{-1}+B\Big)x,\nabla_x\Big\rangle\\
=\frac{1}{2}|Q^{\frac{1}{2}}D_x|^2+\frac{1}{8}|Q^{\frac{1}{2}}Q_{\infty}^{-1}x|^2-i\Big\langle\Big(\frac{1}{2}QQ_{\infty}^{-1}+B\Big)x,D_x\Big\rangle, 
\end{multline} 
with $D_x=i^{-1}\nabla_x$, where $Q_{\infty}$ is the symmetric positive definite matrix (\ref{pav3}).
The operator $\mathscr{L}$ may be considered as a pseudodifferential operator
\begin{equation}\label{3}
\mathscr{L}u=q^w(x,D_x)u(x) =\frac{1}{(2\pi)^n}\int_{\R^{2n}}{e^{i(x-y) \cdot \xi}q\Big(\frac{x+y}{2},\xi\Big)u(y)dyd\xi},
\end{equation}
defined by the Weyl quantization of the quadratic symbol
\begin{equation}\label{pav6}
q(x,\xi)=\frac{1}{2}|Q^{\frac{1}{2}}\xi|^2+\frac{1}{8}|Q^{\frac{1}{2}}Q_{\infty}^{-1}x|^2-i\Big\langle\Big(\frac{1}{2}QQ_{\infty}^{-1}+B\Big)x,\xi\Big\rangle, \quad (x,\xi) \in \rr^{2n},
\end{equation}
with a non-negative real part $\textrm{Re }q \geq 0$.
We check in~\cite{OPPS2}, see (3.16) in Section~3.2, that the Hamilton map of the quadratic form (\ref{pav6}) is given by
\begin{equation}\label{F}
F=\lv\begin{array}{cc}
 -\frac{i}{4}(QQ_{\infty}^{-1}+2B)&\frac{1}{2}Q \\
 -\frac{1}{8}Q_{\infty}^{-1}QQ_{\infty}^{-1}  & \frac{i}{4}(QQ_{\infty}^{-1}+2B)^T       
\end{array}\rv.
\end{equation}
On the other hand, we prove in~\cite{OPPS2}, see formulas (3.22), (3.23) and (3.24), that the singular space of the quadratic operator $\mathscr{L}$ is zero 
\begin{equation}\label{sd1}
S=\{0\}. 
\end{equation}
More precisely, we show in~\cite{OPPS2} that the smallest integer $0 \leq k_0 \leq 2n-1$ satisfying 
\begin{equation}\label{pav30}
\Big(\bigcap_{j=0}^{k_0}\textrm{Ker}
\big[\textrm{Re }F(\textrm{Im }F)^j \big]\Big)\cap \rr^{2n}=\{0\},
\end{equation}
corresponds exactly to the smallest integer $0 \leq k_0 \leq n-1$ satisfying
\begin{equation}\label{pav31}
\textrm{Rank}[Q^{\frac{1}{2}},BQ^{\frac{1}{2}},\dots, B^{k_0}Q^{\frac{1}{2}}]=n.
\end{equation}
This property justifies a posteriori the identical notation chosen for the integers $k_0$ defined in (\ref{e2}) and (\ref{kal2}).
We deduce from~\cite{OPPS2} (Lemma~3.1) and (\ref{we1}) that for all $0 \leq k \leq k_0$,
\begin{multline}\label{upt1}
V_k^{\perp}=\Big(\bigcap_{j=0}^{k}\textrm{Ker}\big[\textrm{Re }F(\textrm{Im }F)^j \big]\Big)\cap \rr^{2n}\\
=\big\{(x,\xi) \in \rr^{2n} : \forall 0 \leq j \leq k,\ QQ_{\infty}^{-1}B^jx=0,\ Q(B^T)^j\xi=0\big\},
\end{multline}
that is
\begin{equation}\label{we20}
V_k=\Big[\Big(\Big(\bigcap_{j=0}^{k}\textrm{Ker}(QQ_{\infty}^{-1}B^j)\Big)\cap \rr^{n}\Big) \times \Big(\Big(\bigcap_{j=0}^{k}\textrm{Ker}\big(Q(B^T)^j\big)\Big)\cap \rr^{n}\Big)\Big]^{\perp}.
\end{equation}
We observe from (\ref{jen2.1}) and (\ref{we20}) that 
\begin{multline}\label{we21}
(0,\xi_0) \in V_k \Leftrightarrow \xi_0 \in \Big(\Big(\bigcap_{j=0}^{k}\textrm{Ker}\big(Q(B^T)^j\big)\Big)\cap \rr^{n}\Big)^{\perp} \\
\Leftrightarrow \xi_0 \in\Big(\Big(\bigcap_{j=0}^{k}\textrm{Ker}\big(Q^{\frac{1}{2}}(B^T)^j\big)\Big)\cap \rr^{n}\Big)^{\perp}
\Leftrightarrow \xi_0 \in \mathcal{V}_k=\big(\textrm{Ran}(Q^{\frac{1}{2}})+...+\textrm{Ran}(B^kQ^{\frac{1}{2}})\big) \cap \rr^n,
\end{multline}
and
\begin{multline}\label{we22}
(x_0,0) \in V_k \Leftrightarrow x_0 \in \Big(\Big(\bigcap_{j=0}^{k}\textrm{Ker}(QQ_{\infty}^{-1}B^j)\Big)\cap \rr^{n}\Big)^{\perp}\\
\Leftrightarrow x_0 \in \Big(\Big(\bigcap_{j=0}^{k}\textrm{Ker}(Q^{\frac{1}{2}}Q_{\infty}^{-1}B^j)\Big)\cap \rr^{n}\Big)^{\perp}
\Leftrightarrow x_0 \in \tilde{\mathcal{V}}_k,
\end{multline}
with 
\begin{equation}\label{we23}
\tilde{\mathcal{V}}_k=\big(\textrm{Ran}(Q_{\infty}^{-1}Q^{\frac{1}{2}})+\textrm{Ran}(B^TQ_{\infty}^{-1}Q^{\frac{1}{2}})+...+\textrm{Ran}((B^T)^kQ_{\infty}^{-1}Q^{\frac{1}{2}})\big) \cap \rr^n, \quad k \geq 0,
\end{equation}
where the orthogonality is taken with respect to the Euclidean structure on $\rr^n$. The following lemma makes explicit the link between the two families of vector subspaces $(\mathcal{V}_k)_{0 \leq k \leq k_0}$  and $(\tilde{\mathcal{V}}_k)_{0 \leq k \leq k_0}$:

\medskip

\begin{lemma}\label{li1}
We have for all $0 \leq k \leq k_0$,
$$\tilde{\mathcal{V}}_k=Q_{\infty}^{-1}\mathcal{V}_k,$$
where $Q_{\infty}$ stands for the positive definite symmetric matrix (\ref{pav3}). 
\end{lemma}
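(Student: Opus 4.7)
The key ingredient is the Lyapunov identity satisfied by $Q_\infty$. My plan is to first derive this identity from the definition (\ref{pav3}), and then leverage it through an induction on $k$ to compare the ranges of $Q_\infty^{-1} B^j Q^{1/2}$ and $(B^T)^j Q_\infty^{-1} Q^{1/2}$ for $0 \le j \le k$.

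For the Lyapunov identity, I would differentiate $s \mapsto e^{sB} Q e^{sB^T}$ and integrate from $0$ to $+\infty$. The spectral localization (\ref{pav1}) forces $e^{sB} Q e^{sB^T} \to 0$ as $s \to +\infty$, and the resulting boundary/integral computation gives $B Q_\infty + Q_\infty B^T = -Q$. Multiplying by $Q_\infty^{-1}$ on both sides rearranges this as the commutation-type formula
\begin{equation*}
Q_\infty^{-1} B \;=\; -B^T Q_\infty^{-1} \;-\; Q_\infty^{-1} Q Q_\infty^{-1}.
\end{equation*}

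Next, I would prove by induction on $k$ that
\begin{equation*}
\sum_{j=0}^{k} \textrm{Ran}\bigl(Q_\infty^{-1} B^j Q^{1/2}\bigr) \;=\; \sum_{j=0}^{k} \textrm{Ran}\bigl((B^T)^j Q_\infty^{-1} Q^{1/2}\bigr).
\end{equation*}
The case $k=0$ is immediate. For the inductive step, I would apply the commutation formula above with $B^k Q^{1/2}$ on the right to obtain
\begin{equation*}
Q_\infty^{-1} B^{k+1} Q^{1/2} = -B^T\bigl(Q_\infty^{-1} B^k Q^{1/2}\bigr) - \bigl(Q_\infty^{-1} Q^{1/2}\bigr)\bigl(Q^{1/2} Q_\infty^{-1} B^k Q^{1/2}\bigr).
\end{equation*}
By the inductive hypothesis, the range of the first term lies in $\sum_{j=1}^{k+1} \textrm{Ran}((B^T)^j Q_\infty^{-1} Q^{1/2})$, while the range of the second term trivially lies in $\textrm{Ran}(Q_\infty^{-1} Q^{1/2})$. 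This gives one inclusion; the reverse inclusion follows symmetrically from the mirror identity $B^T Q_\infty^{-1} = -Q_\infty^{-1} B - Q_\infty^{-1} Q Q_\infty^{-1}$ (swapping $B$ with $B^T$ in the previous argument).

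Finally, recalling definitions (\ref{jen2.1}) and (\ref{we23}), intersecting both sides of the range equality with $\rr^n$ (and using that $Q_\infty^{-1}$ is a real matrix so that $Q_\infty^{-1}$ preserves $\rr^n$) yields $\tilde{\mathcal{V}}_k = Q_\infty^{-1} \mathcal{V}_k$. The only mildly delicate point is the bookkeeping in the induction; once the Lyapunov identity is established, no further obstacle arises.
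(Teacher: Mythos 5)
Your proposal is correct and follows essentially the same route as the paper: both rest on the steady state (Lyapunov) identity $Q+BQ_{\infty}+Q_{\infty}B^{T}=0$ obtained from the definition of $Q_{\infty}$, followed by an induction that trades $Q_{\infty}^{-1}B^{j}$ for $(B^{T})^{j}Q_{\infty}^{-1}$ up to terms absorbed by $\mathrm{Ran}(Q_{\infty}^{-1}Q^{\frac{1}{2}})$. The only cosmetic difference is that you run the induction directly on sums of ranges, whereas the paper works with the orthogonal complements (intersections of kernels of $Q^{\frac{1}{2}}(Q_{\infty}^{-1}Q+B^{T})^{j}Q_{\infty}^{-1}$) and dualizes back at the end.
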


\medskip

\begin{proof}
We begin by noticing from (\ref{pav0}) and (\ref{pav3}) that for all $t \geq 0$,
\begin{equation}\label{pav0.1}
Q_{\infty}=Q_t+e^{tB}Q_{\infty}e^{tB^T}.
\end{equation}
We deduce from (\ref{pav0.1}) the steady state variance equation
\begin{equation}\label{pav10}
\frac{d}{dt}(Q_t+e^{tB}Q_{\infty}e^{tB^T})|_{t=0}=Q+BQ_{\infty}+Q_{\infty}B^T=0.
\end{equation}
It follows from (\ref{we22}) and (\ref{pav10}) that 
\begin{multline}\label{wq1}
\tilde{\mathcal{V}}_k^{\perp}=\Big(\bigcap_{j=0}^{k}\textrm{Ker}(Q^{\frac{1}{2}}Q_{\infty}^{-1}B^j)\Big)\cap \rr^{n}
=\Big(\bigcap_{j=0}^{k}\textrm{Ker}\big(Q^{\frac{1}{2}}(Q_{\infty}^{-1}BQ_{\infty})^jQ_{\infty}^{-1}\big)\Big)\cap \rr^{n}\\
=\Big(\bigcap_{j=0}^{k}\textrm{Ker}\big(Q^{\frac{1}{2}}(Q_{\infty}^{-1}Q+ B^T)^jQ_{\infty}^{-1}\big)\Big)\cap \rr^{n}.
\end{multline}
Next we check by induction on $k$ that for all $k \geq 0$,
\begin{equation}\label{wq2}
\Big(\bigcap_{j=0}^{k}\textrm{Ker}\big(Q^{\frac{1}{2}}(Q_{\infty}^{-1}Q+ B^T)^jQ_{\infty}^{-1}\big)\Big)\cap \rr^{n}=\Big(\bigcap_{j=0}^{k}\textrm{Ker}\big(Q^{\frac{1}{2}}(B^T)^jQ_{\infty}^{-1}\big)\Big)\cap \rr^{n}.
\end{equation}
Indeed, the formula (\ref{wq2}) trivially holds true for $k=0$. By expanding the products, we observe that 
\begin{equation}\label{wq3}
Q^{\frac{1}{2}}(Q_{\infty}^{-1}Q+ B^T)^{k+1}Q_{\infty}^{-1}x=Q^{\frac{1}{2}}(B^T)^{k+1}Q_{\infty}^{-1}x,
\end{equation}
when 
$$\forall 0 \leq j \leq k, \quad Q^{\frac{1}{2}}(B^T)^jQ_{\infty}^{-1}x=0.$$
We deduce from (\ref{wq3}) that the formula (\ref{wq2}) holds true at the rank $k+1$ if it holds true at the rank $k$. According to (\ref{jen2.1}), (\ref{wq1}) and (\ref{wq2}), we have for all $0 \leq k \leq k_0$,
\begin{multline}\label{wq4}
\tilde{\mathcal{V}}_k=\Big(\Big(\bigcap_{j=0}^{k}\textrm{Ker}\big(Q^{\frac{1}{2}}(B^T)^jQ_{\infty}^{-1}\big)\Big)\cap \rr^{n}\Big)^{\perp}\\
=\big(\textrm{Ran}(Q_{\infty}^{-1}Q^{\frac{1}{2}})+\textrm{Ran}(Q_{\infty}^{-1}BQ^{\frac{1}{2}})+...+\textrm{Ran}(Q_{\infty}^{-1}B^kQ^{\frac{1}{2}})\big) \cap \rr^n=Q_{\infty}^{-1}\mathcal{V}_k,
\end{multline}
since $Q_{\infty}^{-1}$ is symmetric. This ends the proof of Lemma~\ref{li1}.
\end{proof}

We deduce from Lemma~\ref{li1} and (\ref{byebye}) that 
\begin{equation}\label{byebye2}
\tilde{\mathcal{V}}_0 \subsetneq \tilde{\mathcal{V}}_1 \subsetneq ... \subsetneq \tilde{\mathcal{V}}_{k_0}=\rr^n.
\end{equation}
According to (\ref{byebye2}), we can consider for any $x_0 \in \rr^n$ the smallest integer $0 \leq k \leq k_0$ 
such that $x_0 \in \tilde{\mathcal{V}}_k$. This integer is denoted $\tilde{k}_{x_0}$ and is called the space index of the point $x_0 \in \rr^n$ with respect to the Ornstein-Uhlenbeck operator $P$.

The following result shows that Theorem~\ref{th} allows one to recover the estimates (\ref{we11}) proven in~\cite{lunardi1} and to derive new decay estimates:

\medskip

\begin{corollary}\label{coro1}
Let 
$$P=\frac{1}{2}\emph{\textrm{Tr}}(Q\nabla_x^2)+\langle Bx,\nabla_x\rangle,\quad x \in \rr^n,$$ 
be a hypoelliptic Ornstein-Uhlenbeck operator, which admits the invariant measure $d\mu(x)=\rho(x) dx$. 
Then, the global solution $v(t)=e^{tP}v_0$ to the Cauchy problem
\begin{equation}\label{p2}
\left\lbrace\begin{array}{c}
\partial_tv=Pv,\\
v|_{t=0}=v_0 \in L^2_{\mu},
\end{array}\right.
\end{equation}
with $L^2_{\mu}=L^2(\rr^n,d\mu)$, satisfies: $\exists C>0$, $\forall x_0 \in \rr^{n}$, $\forall \xi_0 \in \rr^{n}$, $\forall v_0 \in L^2_{\mu}$,    
$$\forall 0<t \leq 1, \quad \|\langle \xi_0,D_x \rangle e^{tP}v_0\|_{L^2_{\mu}}\leq C|\xi_0| t^{-(2k_{\xi_0}+1)/2}\|v_0\|_{L^2_{\mu}},$$
$$\forall 0<t \leq 1, \quad \|\langle x_0,x \rangle e^{tP}v_0\|_{L^2_{\mu}}\leq C|x_0| t^{-(2\tilde{k}_{x_0}+1)/2}\|v_0\|_{L^2_{\mu}},$$
$$\forall t \geq 1, \quad \|\langle \xi_0,D_x \rangle e^{tP}v_0\|_{L^2_{\mu}}\leq C|\xi_0|\|v_0\|_{L^2_{\mu}},$$
$$\forall t \geq 1, \quad \|\langle x_0,x \rangle e^{tP}v_0\|_{L^2_{\mu}}\leq C|x_0|\|v_0\|_{L^2_{\mu}},$$
with $\|\cdot\|_{L^2_{\mu}}$ the $L^2(\rr^n,d\mu)$-norm, where $0 \leq k_{\xi_0} \leq k_0$ denotes the frequency index of $\xi_0 \in \rr^n$ with respect to $P$, and where $0 \leq \tilde{k}_{x_0} \leq k_0$ denotes the space index of $x_0 \in \rr^n$ with respect to $P$.
\end{corollary}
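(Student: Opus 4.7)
The idea is to reduce the claim to Theorem~\ref{th} applied to the quadratic operator $\mathscr{L}$ introduced in (\ref{pav4.5})--(\ref{pav6}), whose singular space is zero by (\ref{sd1}). Let $U:L^2_{\mu}\to L^2(\rr^n,dx)$ be the isometry $Uv=\sqrt{\rho}\,v$. Since $\rho(x)=c\,e^{-\frac{1}{2}\langle Q_{\infty}^{-1}x,x\rangle}$, a direct computation gives $UD_{x_j}U^{-1}=D_{x_j}-\tfrac{i}{2}(Q_{\infty}^{-1}x)_j$, while multiplication operators are left invariant. Combined with (\ref{pav4.5}) this yields the intertwining relations
\begin{align*}
Ue^{tP}U^{-1}&=e^{-\frac{t}{2}\mathrm{Tr}(B)}e^{-t\mathscr{L}},\\
U\langle\xi_0,D_x\rangle U^{-1}&=\langle\xi_0,D_x\rangle-\tfrac{i}{2}\langle Q_{\infty}^{-1}\xi_0,x\rangle,\\
U\langle x_0,x\rangle U^{-1}&=\langle x_0,x\rangle,
\end{align*}
where we used the symmetry of $Q_{\infty}^{-1}$ in the second line.

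For the short-time bound on the frequency derivative, I would use $U$ to move the problem to $L^2(\rr^n,dx)$, apply the triangle inequality to split the transformed operator into the pure momentum piece $\langle\xi_0,D_x\rangle$ and the pure position piece $\tfrac{1}{2}\langle Q_{\infty}^{-1}\xi_0,x\rangle$, and invoke Theorem~\ref{th} for each piece. The crucial point is the identification of the singular-space indices: by (\ref{we21}) the point $(0,\xi_0)\in\rr^{2n}$ has index $k_{\xi_0}$ with respect to the singular space of $\mathscr{L}$, and by (\ref{we22}) together with Lemma~\ref{li1} the point $(Q_{\infty}^{-1}\xi_0,0)$ has index $\tilde{k}_{Q_{\infty}^{-1}\xi_0}=k_{\xi_0}$. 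Both contributions therefore produce the same rate $t^{-(2k_{\xi_0}+1)/2}$, and the bound follows after absorbing the uniformly bounded factor $e^{-\frac{t}{2}\mathrm{Tr}(B)}$ on $0<t\le 1$. For the position derivative, Theorem~\ref{th} is applied directly to the direction $X_0=(x_0,0)$, whose index is $\tilde{k}_{x_0}$ by (\ref{we22}).

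For the large-time bound I would simply combine the short-time estimate at $t=1$ with the contractivity of $(e^{tP})_{t\ge 0}$ on $L^2_{\mu}$: writing $e^{tP}v_0=e^{P}e^{(t-1)P}v_0$ for $t\ge 1$ gives
$$\|\langle\xi_0,D_x\rangle e^{tP}v_0\|_{L^2_{\mu}}\le C|\xi_0|\,\|e^{(t-1)P}v_0\|_{L^2_{\mu}}\le C|\xi_0|\,\|v_0\|_{L^2_{\mu}},$$
and similarly for $\langle x_0,x\rangle$. The main technical content of the proof is the explicit computation of the conjugation $U\langle \xi_0,D_x\rangle U^{-1}$ and the verification, via Lemma~\ref{li1}, that the two phase-space directions arising from this decomposition share the index $k_{\xi_0}$ with respect to the singular space; this coincidence is precisely what allows the sharp power $(2k_{\xi_0}+1)/2$ to survive the reduction to $\mathscr{L}$.
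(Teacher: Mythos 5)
Your argument is correct and follows essentially the same route as the paper: conjugation by the isometry $v\mapsto\sqrt{\rho}\,v$, the computation (\ref{dfg5}) giving the extra term $-\frac{i}{2}\langle Q_{\infty}^{-1}\xi_0,x\rangle$, the identification of the singular-space indices of $(0,\xi_0)$ and $(Q_{\infty}^{-1}\xi_0,0)$ through (\ref{we21}), (\ref{we22}) and Lemma~\ref{li1}, and an application of Theorem~\ref{th} to each piece. The only point where you deviate is the regime $t\geq 1$: the paper feeds the large-time bound of Theorem~\ref{th} directly into the conjugation identity and uses the exact cancellation $\omega_0=-\frac{1}{2}\textrm{Tr}(B)$ from (\ref{conti6}) to absorb the exponentially growing factor $e^{-\frac{t}{2}\textrm{Tr}(B)}$, whereas you split $e^{tP}=e^{P}e^{(t-1)P}$ and invoke contractivity of $(e^{tP})_{t\geq 0}$ on $L^2_{\mu}$; this is legitimate, but since the paper never states that contraction property, you should justify it in one line, either by the standard Jensen-type argument for the Markov semigroup (\ref{gen}) with invariant measure $\mu$, or by noting that the conjugation identity together with (\ref{ev1}) and (\ref{conti6}) gives uniform boundedness of $e^{tP}$ on $L^2_{\mu}$, which suffices.
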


\medskip

\begin{proof}
By using from (\ref{pav6}) and (\ref{sd1}) that the quadratic operator
$$\mathscr{L}=q^w(x,D_x)=\frac{1}{2}|Q^{\frac{1}{2}}D_x|^2+\frac{1}{8}|Q^{\frac{1}{2}}Q_{\infty}^{-1}x|^2-i\Big\langle\Big(\frac{1}{2}QQ_{\infty}^{-1}+B\Big)x,D_x\Big\rangle,$$
has a Weyl symbol with a zero singular space $S=\{0\}$ and a non-negative real part $\textrm{Re }q \geq 0$,
we can deduce from~\cite{HPS} (Theorem~1.2.1) that the evolution equation associated to the accretive operator $\mathscr{L}$, 
$$\left\lbrace\begin{array}{c}
\partial_tu+\mathscr{L}u=0,\\
u|_{t=0}=u_0 \in L^2(\rr^n,dx),
\end{array}\right.$$
is smoothing in the Schwartz space $\mathscr{S}(\rr^n)$ for any positive time $t>0$, that is,
\begin{equation}\label{p1}
\forall t>0, \quad u(t)=e^{-t\mathscr{L}}u_0 \in \mathscr{S}(\rr^n),
\end{equation}
where $(e^{-t\mathscr{L}})_{t \geq 0}$ denotes the contraction semigroup generated by $\mathscr{L}$. It follows from (\ref{pav4.5}) that the solution to the evolution equation (\ref{p2}) is given by
\begin{equation}\label{bn1}
v(t)=e^{tP}v_0=(\sqrt{\rho})^{-1}e^{-t(\mathscr{L}+\frac{1}{2}\textrm{Tr}(B))}(\sqrt{\rho}v_0)=e^{-\frac{t}{2}\textrm{Tr}(B)}(\sqrt{\rho})^{-1}e^{-t\mathscr{L}}(\sqrt{\rho}v_0),
\end{equation}
for all $t \geq 0$.
By using from (\ref{pav2}) that
\begin{equation}\label{dfg5}
\sqrt{\rho}\partial_{x_i}\big((\sqrt{\rho})^{-1}u\big)=e^{-\frac{1}{4}\langle Q_{\infty}^{-1}x,x\rangle}\partial_{x_i}(e^{\frac{1}{4}\langle Q_{\infty}^{-1}x,x\rangle}u)=\Big(\partial_{x_i}+\frac{1}{2}(Q_{\infty}^{-1}x)_i\Big)u,
\end{equation}
where $(Q_{\infty}^{-1}x)_i$ denotes the $i^{\textrm{th}}$ coordinate,
since the matrix $Q_{\infty}^{-1}$ is symmetric, we notice that for all $t > 0$,
\begin{equation}\label{dfg2}
\langle \xi_0,D_x \rangle e^{tP}v_0=e^{-\frac{t}{2}\textrm{Tr}(B)}(\sqrt{\rho})^{-1}
\Big(\langle \xi_0,D_x \rangle-\frac{i}{2}\langle \xi_0,Q_{\infty}^{-1}x \rangle \Big)e^{-t\mathscr{L}}(\sqrt{\rho}v_0).
\end{equation}
By using that the mappings 
\begin{equation}\label{1w1}
\begin{array}{cc}
T :  L^2_{\mu} & \rightarrow  L^2\\
\ \ \ u &  \mapsto  \sqrt{\rho}u
\end{array}, \qquad \begin{array}{cc}
T^{-1} :  L^2 & \rightarrow L^2_{\mu}\\
\quad \ \ \ u & \mapsto \sqrt{\rho}^{-1}u
\end{array},
\end{equation}
are isometric,
\begin{multline}\label{dfg1}
\|\langle \xi_0,D_x \rangle e^{tP}v_0\|_{L^2_{\mu}}\\
\leq e^{-\frac{t}{2}\textrm{Tr}(B)}\big(\|\langle \xi_0,D_x \rangle e^{-t\mathscr{L}}(\sqrt{\rho}v_0)\|_{L^2}
+\|\langle Q_{\infty}^{-1}\xi_0,x \rangle e^{-t\mathscr{L}}(\sqrt{\rho}v_0)\|_{L^2}\big).
\end{multline}
According to (\ref{pav4.5}) and (\ref{conti5}), we observe from~\cite{OPPS2} (Proposition~2.2 and formula (2.6)) that 
\begin{equation}\label{conti6}
\omega_0=\sum_{\substack{\lambda \in \sigma(F) \\
-i \lambda \in \cc_+}}r_{\lambda}\textrm{Re}(-i\lambda)=-\frac{1}{2}\textrm{Tr}(B)>0,
\end{equation}
where $r_{\lambda}$ stands for the dimension of the space of generalized eigenvectors of the Hamilton map (\ref{F}) in $\cc^{2n}$ associated to the eigenvalue $\lambda$.

According to Lemma~\ref{li1}, (\ref{we21}) and (\ref{we22}), the property $\xi_0 \in \mathcal{V}_{k_{\xi_0}}$ implies that 
\begin{equation}\label{yy1}
(0,\xi_0) \in V_{k_{\xi_0}}, \quad (Q_{\infty}^{-1}\xi_0,0) \in V_{k_{\xi_0}},
\end{equation}
since $Q_{\infty}^{-1}\xi_0 \in \tilde{\mathcal{V}}_{k_{\xi_0}}$, where $0 \leq k_{\xi_0} \leq k_0$ denotes the frequency index of $\xi_0 \in \rr^n$ with respect to $P$. The indices with respect to the singular space of the quadratic operator $\mathscr{L}$ of both directions $(0,\xi_0)$ and $(Q_{\infty}^{-1}\xi_0,0)$, are therefore lower than $k_{\xi_0}$.
We deduce from (\ref{1w1}), (\ref{dfg1}), (\ref{conti6}), (\ref{yy1}) and Theorem~\ref{th} that there exists a positive constant $C>0$ such that for all $\xi_0 \in \rr^{n}$, $v_0 \in L^2_{\mu}$,  
$$\forall 0<t \leq 1, \quad \|\langle \xi_0,D_x \rangle e^{tP}v_0\|_{L^2_{\mu}}\leq C|\xi_0| t^{-(2k_{\xi_0}+1)/2}\|v_0\|_{L^2_{\mu}},$$
$$\forall t \geq 1, \quad \|\langle \xi_0,D_x \rangle e^{tP}v_0\|_{L^2_{\mu}}\leq C|\xi_0|\|v_0\|_{L^2_{\mu}}.$$
On the other hand, we deduce from (\ref{bn1}) that for all $t \geq 0$, $v_0 \in L^2_{\mu}$,
\begin{equation}\label{3bn1}
\langle x_0,x \rangle e^{tP}v_0=e^{-\frac{t}{2}\textrm{Tr}(B)}(\sqrt{\rho})^{-1}\langle x_0,x \rangle e^{-t\mathscr{L}}(\sqrt{\rho}v_0).
\end{equation}
We deduce from (\ref{1w1}) and (\ref{3bn1}) that for all $t > 0$, $v_0 \in L^2_{\mu}$,
\begin{equation}\label{3bn2}
\|\langle x_0,x \rangle e^{tP}v_0\|_{L^2_{\mu}}=e^{-\frac{t}{2}\textrm{Tr}(B)}\|\langle x_0,x \rangle e^{-t\mathscr{L}}(\sqrt{\rho}v_0)\|_{L^2}.
\end{equation}
We observe from (\ref{we22}) that the property $x_0 \in \tilde{\mathcal{V}}_{\tilde{k}_{x_0}}$ implies that $(x_0,0) \in V_{\tilde{k}_{x_0}}$,
where $0 \leq \tilde{k}_{x_0} \leq k_0$ denotes the space index of $x_0 \in \rr^n$ with respect to $P$.
The index with respect to the singular space of the quadratic operator $\mathscr{L}$ of the direction $(x_0,0)$ is therefore lower than $\tilde{k}_{x_0}$.
We deduce from (\ref{1w1}), (\ref{conti6}), (\ref{3bn2}) and Theorem~\ref{th} that there exists a positive constant $C>0$ such that for all $x_0 \in \rr^{n}$, $v_0 \in L^2_{\mu}$,  
$$\forall 0<t \leq 1, \quad \|\langle x_0,x \rangle e^{tP}v_0\|_{L^2_{\mu}}\leq C|x_0| t^{-(2\tilde{k}_{x_0}+1)/2}\|v_0\|_{L^2_{\mu}},$$
$$\forall t \geq 1, \quad \|\langle x_0,x \rangle e^{tP}v_0\|_{L^2_{\mu}}\leq C|x_0|\|v_0\|_{L^2_{\mu}}.$$
This ends the proof of Corollary~\ref{coro1}.
\end{proof}

We can deduce from Corollary~\ref{thj} the following result:  

\medskip

\begin{corollary}\label{coro111}
Under the assumptions of Corollary~\ref{coro1}, there exists a positive constant $C>1$ such that for all $m \geq 1$, $X_1=(x_1,\xi_1)  \in \rr^{2n}$, ..., $X_m=(x_m,\xi_m)  \in \rr^{2n}$, $v_0 \in L^2(\rr^n,d\mu)$,
\begin{multline*}
\forall 0<t \leq 1, \quad \|(\langle x_1, x \rangle+\langle \xi_1,D_x \rangle)\ ...\ (\langle x_m, x \rangle+\langle \xi_m,D_x \rangle) e^{tP}v_0\|_{L^2_{\mu}}\\
\leq C^{m}(m!)^{\frac{2k_0+1}{2}}\Big(\prod_{j=1}^m|X_j|\Big)t^{-\frac{(2k_0+1)m}{2}}\|v_0\|_{L^2_{\mu}},
\end{multline*}
\begin{multline*}
\forall t \geq 1, \quad \|(\langle x_1, x \rangle+\langle \xi_1,D_x \rangle)\ ...\ (\langle x_m, x \rangle+\langle \xi_m,D_x \rangle)e^{tP}v_0\|_{L^2_{\mu}}\\
\leq C^m(m!)^{\frac{2k_0+1}{2}}\Big(\prod_{j=1}^m|X_j|\Big)\|v_0\|_{L^2_{\mu}},
\end{multline*}
with $\|\cdot\|_{L^2_{\mu}}$ the $L^2(\rr^n,d\mu)$-norm, where $0 \leq k_0 \leq n-1$ is the smallest integer satisfying (\ref{kal2}),
and where $|\cdot|$ denotes the Euclidean norm on $\rr^{2n}$.

\end{corollary}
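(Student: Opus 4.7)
The plan is to reduce the estimates on $e^{tP}$ acting on $L^2_{\mu}$ to the corresponding ones for the contraction semigroup $e^{-t\mathscr{L}}$ on $L^2(\rr^n)$ generated by the quadratic operator $\mathscr{L}=q^w(x,D_x)$ defined in (\ref{pav5})--(\ref{pav6}), and then to invoke Corollary~\ref{thj}. Recall from (\ref{sd1}) and (\ref{pav30})--(\ref{pav31}) that $\mathscr{L}$ has a non-negative real Weyl symbol and a zero singular space, whose structural parameter coincides with the integer $k_0$ in the statement, and from (\ref{conti6}) that the spectral abscissa of $\mathscr{L}$ equals $\omega_0=-\tfrac{1}{2}\textrm{Tr}(B)>0$.

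The key identity is the conjugation formula (\ref{bn1}), namely $e^{tP}v_0=e^{-\frac{t}{2}\textrm{Tr}(B)}(\sqrt{\rho})^{-1}e^{-t\mathscr{L}}(\sqrt{\rho}v_0)$, together with the fact (\ref{1w1}) that multiplication by $\sqrt{\rho}$ is an isometry from $L^2_{\mu}$ onto $L^2(\rr^n)$. First I would compute the action of conjugation by $\sqrt{\rho}$ on the elementary first-order operators $\ell_{X_j}:=\langle x_j,x\rangle+\langle \xi_j,D_x\rangle$. Using (\ref{dfg5}), a direct computation yields
\begin{equation*}
\sqrt{\rho}\,\ell_{X_j}\,(\sqrt{\rho})^{-1}=\langle x_j,x\rangle+\langle \xi_j,D_x\rangle-\tfrac{i}{2}\langle Q_{\infty}^{-1}\xi_j,x\rangle=\langle \tilde{X}_j,(x,D_x)\rangle,
\end{equation*}
where $\tilde{X}_j=(x_j-\tfrac{i}{2}Q_{\infty}^{-1}\xi_j,\,\xi_j)\in\cc^{2n}$ is a complex vector satisfying $|\tilde{X}_j|\leq (1+\tfrac{1}{2}\|Q_{\infty}^{-1}\|)|X_j|$. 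Iterating this identity through the product $\ell_{X_1}\cdots\ell_{X_m}e^{tP}$ and using the isometry reduces the task to bounding
\begin{equation*}
\bigl\|\langle \tilde{X}_1,(x,D_x)\rangle\cdots\langle \tilde{X}_m,(x,D_x)\rangle\,e^{-t\mathscr{L}}u\bigr\|_{L^2(\rr^n)}
\end{equation*}
for $u=\sqrt{\rho}v_0$, noting $\|u\|_{L^2(\rr^n)}=\|v_0\|_{L^2_{\mu}}$.

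Next I would apply Corollary~\ref{thj}. While that statement is written for real vectors $X_j\in\rr^{2n}$, it extends immediately to $\tilde{X}_j\in\cc^{2n}$ by splitting each factor into its real and imaginary parts, expanding the resulting product into at most $2^m$ terms, applying the real version of Corollary~\ref{thj} to each (with $|\textrm{Re}\,\tilde{X}_j|,|\textrm{Im}\,\tilde{X}_j|\leq |\tilde{X}_j|$), and absorbing the combinatorial factor $2^m$ into an enlarged constant $C$. For $0<t\leq 1$ this yields
\begin{equation*}
\bigl\|\ell_{X_1}\cdots\ell_{X_m}e^{tP}v_0\bigr\|_{L^2_{\mu}}\leq e^{-\frac{t}{2}\textrm{Tr}(B)}\,C^m(m!)^{\frac{2k_0+1}{2}}\Big(\prod_{j=1}^m|\tilde{X}_j|\Big)t^{-\frac{(2k_0+1)m}{2}}\|v_0\|_{L^2_{\mu}},
\end{equation*}
and the prefactor $e^{-\frac{t}{2}\textrm{Tr}(B)}$ is uniformly bounded on $[0,1]$ by (\ref{conti5}). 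For $t\geq 1$, Corollary~\ref{thj} provides the factor $e^{-\omega_0 t}$ which exactly cancels the growth $e^{-\frac{t}{2}\textrm{Tr}(B)}=e^{\omega_0 t}$ coming from (\ref{bn1}), thanks to the identity $\omega_0=-\tfrac{1}{2}\textrm{Tr}(B)$ from (\ref{conti6}), yielding the required bound independent of $t$.

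The only real subtlety is the bookkeeping for the complex linear forms $\langle \tilde{X}_j,(x,D_x)\rangle$ and the tracking of constants when expanding into real pieces; this is routine. Otherwise the argument is a direct iterated application of Corollary~\ref{thj} through the isometric conjugation by $\sqrt{\rho}$, in the same spirit as the deduction of Corollary~\ref{coro1} from Theorem~\ref{th}.
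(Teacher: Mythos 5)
Your argument is correct and is essentially the paper's own proof: via (\ref{bn1}) and (\ref{dfg5}) one conjugates the factors $\langle x_j,x\rangle+\langle\xi_j,D_x\rangle$ through $(\sqrt{\rho})^{-1}$, obtaining first-order operators with complex directions $(x_j-\tfrac{i}{2}Q_{\infty}^{-1}\xi_j,\xi_j)$ acting on $e^{-t\mathscr{L}}(\sqrt{\rho}v_0)$, and then applies Corollary~\ref{thj}, the factor $e^{-\frac{t}{2}\mathrm{Tr}(B)}$ being cancelled for $t\geq 1$ by $e^{-\omega_0 t}$ thanks to (\ref{conti6}). Your explicit splitting of the complex directions into real and imaginary parts, with the $2^m$ factor absorbed into $C^m$, merely spells out what the paper leaves implicit when it invokes Corollary~\ref{thj} directly on (\ref{dfg3}).
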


\begin{proof}
By resuming the proof of Corollary~\ref{coro1}, it follows from (\ref{bn1}), (\ref{dfg5}), (\ref{dfg2}) and (\ref{3bn1}) that for all $m \geq 1$, $X_1=(x_1,\xi_1)  \in \rr^{2n}$, ..., $X_m=(x_m,\xi_m)  \in \rr^{2n}$, $v_0 \in L^2(\rr^n,d\mu)$,
\begin{multline}\label{dfg3}
(\langle x_1, x \rangle+\langle \xi_1,D_x \rangle)\ ...\ (\langle x_m, x \rangle+\langle \xi_m,D_x \rangle) e^{tP}v_0=e^{-\frac{t}{2}\textrm{Tr}(B)}(\sqrt{\rho})^{-1}\\ 
\times (\langle x_1-i 2^{-1}Q_{\infty}^{-1}\xi_1,x \rangle+\langle \xi_1,D_x \rangle)\ ...\ (\langle x_m-i 2^{-1}Q_{\infty}^{-1}\xi_m,x \rangle+\langle \xi_m,D_x \rangle)e^{-t\mathscr{L}}(\sqrt{\rho}v_0).
\end{multline}
We directly deduce from Corollary~\ref{thj} that there exists a positive constant $C>1$ such that for all $m \geq 1$, $X_1=(x_1,\xi_1)  \in \rr^{2n}$, ..., $X_m=(x_m,\xi_m)  \in \rr^{2n}$, $v_0 \in L^2(\rr^n,d\mu)$,
\begin{multline*}
\forall 0<t \leq 1, \quad \big\|(\langle x_1, x \rangle+\langle \xi_1,D_x \rangle)\ ...\ (\langle x_m, x \rangle+\langle \xi_m,D_x \rangle) e^{tP}v_0\|_{L^2_{\mu}}\\
\leq C^{m}(m!)^{\frac{2k_0+1}{2}}\Big(\prod_{j=1}^m|X_j|\Big)t^{-\frac{(2k_0+1)m}{2}}\|v_0\|_{L^2_{\mu}},
\end{multline*}
\begin{multline*}
\forall t \geq 1, \quad \|(\langle x_1, x \rangle+\langle \xi_1,D_x \rangle)\ ...\ (\langle x_m, x \rangle+\langle \xi_m,D_x \rangle)e^{tP}v_0\|_{L^2_{\mu}}\\
\leq C^m(m!)^{\frac{2k_0+1}{2}}\Big(\prod_{j=1}^m|X_j|\Big)\|v_0\|_{L^2_{\mu}}.
\end{multline*}
This ends the proof of Corollary~\ref{coro111}.
\end{proof}

The upper bound in Corollary~\ref{coro111} for iterated differentiations of the semigroup generated by a degenerate hypoelliptic Ornstein-Uhlenbeck operator is not as sharp as the estimate (\ref{we10}) established by Lunardi and Farkas in~\cite{lunardi1}. Indeed, the result of Corollary~\ref{coro111} does not capture the dependence of the short-time asymptotics according to the phase space directions. However, the result of Corollary~\ref{coro111} provides new decay estimates and upper bounds for moments of any order in the short-time limit $t \to 0$.   

As an application of Corollary~\ref{th0}, we recover the result proven in~\cite{OPPS2} (Proposition~2.3). Before recalling the statement of this result, we set some notations. We recall that the 1-dimensional Hermite polynomials are defined by
$$h_n(x)=\frac{(-1)^n}{\sqrt{n!}}e^{\frac{x^2}{2}}\frac{d^n}{dx^n}e^{-\frac{x^2}{2}}, \quad n \geq 0,\ x \in \rr.$$
By using the fact that the symmetric matrix $Q_{\infty}$ is positive definite, we can introduce an orthogonal matrix $U$ such that $UQ_{\infty}U^{-1}=\textrm{diag}[\lambda_1,...,\lambda_n]$ is diagonal. We define for any multi-index $\beta=(\beta_1,...,\beta_n) \in \mathbb{N}^n$,
$$H_{\beta}(x)=\prod_{j=1}^nh_{\beta_j}\Big(\frac{(U\mathcal{T}^{-1}x)_{j}}{\sqrt{\lambda_j}}\Big), \quad x \in \rr^n,$$
where $\mathcal{T}$ denotes the invertible matrix representing the change of basis from the canonical basis of $\rr^n$ to the basis $\mathcal{B}$ defined in (\ref{ui0}). 
As eigenfunctions of the selfadjoint non-positive Ornstein-Uhlenbeck operator 
\begin{equation}\label{hj10}
AH_{\beta}=\Big(\frac{1}{2}\textrm{Tr}(Q_{\infty}\nabla_x^2)-\frac{1}{2}\langle x,\nabla_x\rangle\Big)H_{\beta}=-\frac{|\beta|}{2}H_{\beta},
\end{equation}
with $|\beta|=\beta_1+...+\beta_n$, these polynomials constitute an orthonormal basis of $L^2_{\mu}$. For any $s>0$,
the Sobolev space $H^s(\rr^n,d\mu)$ is defined as the domain of the operator $(\sqrt{I-A})^s$, that is, the set of $L^2_{\mu}$ functions satisfying
\begin{equation}\label{hj12}
\|u\|_{H^s(\rr^n,d\mu)}^2=\|(\sqrt{I-A})^su\|_{L^2_{\mu}}^2
=\sum_{\beta \in \mathbb{N}^n}\Big(1+\frac{|\beta|}{2}\Big)^{s}|(u,H_{\beta})_{L^2_{\mu}}|^2<+\infty.
\end{equation}
For $s_0,s_1,...,s_{k_0}>0$, the anisotropic Sobolev space $H^{s_0,s_1,....,s_{k_0}}(\rr^n,d\mu)$ is defined as the space of $L^2_{\mu}$ functions satisfying
$$\|u\|_{H^{s_0,s_1,....,s_{k_0}}(\rr^n,d\mu)}^2=\sum_{\beta \in \mathbb{N}^n}\sum_{k=0}^{k_0}\Big(1+\sum_{j \in I_k}\frac{\beta_j}{2}\Big)^{s_k}|(u,H_{\beta})_{L_{\mu}^2}|^2,$$ 
where the sets of indices $(I_k)_{0 \leq k \leq k_0}$ are defined in (\ref{ui2}).
The result of~\cite{lunardi1} (Theorem~8) shows that the domain of the infinitesimal generator of the Ornstein-Ulhenbeck semigroup $(T(t))_{t \geq 0}$ satisfies the following embedding into the anisotropic Sobolev space
\begin{equation}\label{ui5}
D(P) \subset H^{2,\frac{2}{3},\frac{2}{5}....,\frac{2}{2k_0+1}}(\rr^n,d\mu) \subset H^{\frac{2}{2k_0+1}}(\rr^n,d\mu).
\end{equation}
It was shown in~\cite{OPPS2} (Proposition~2.3) that Corollary~\ref{th0} allows one to recover the weaker embedding of the domain into the isotropic Sobolev space
$D(P) \subset H^{2/(2k_0+1)}(\rr^n,d\mu).$

\medskip

\begin{proposition}\label{subest}
Let 
$$P=\frac{1}{2}\emph{\textrm{Tr}}(Q\nabla_x^2)+\langle Bx,\nabla_x\rangle, \quad x \in \rr^n,$$ 
be a hypoelliptic Ornstein-Uhlenbeck operator, which admits the invariant measure $d\mu(x)=\rho(x) dx$.
Then, there exists a positive constant $C>0$ such that 
$$\forall v \in D(P), \quad \|v\|_{H^{\frac{2}{2k_0+1}}(\rr^n,d\mu)}  \leq C(\|Pv\|_{L_{\mu}^2}+\|v\|_{L_{\mu}^2}),$$
with $\|\cdot\|_{L^2_{\mu}}$ the $L^2(\rr^n,d\mu)$-norm, 
where $0 \leq k_0 \leq n-1$ is the smallest integer satisfying (\ref{kal2}) and
where the Sobolev space $H^{2/(2k_0+1)}(\rr^n,d\mu)$ is defined in (\ref{hj12}).
\end{proposition}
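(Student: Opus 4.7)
The plan is to transport the isotropic subelliptic estimate of Corollary~\ref{th0} from $L^2(\rr^n, dx)$ to the weighted space $L^2_{\mu}$ via the isometry $T : L^2_{\mu} \to L^2(\rr^n, dx)$, $Tu = \sqrt{\rho}\, u$ already used in the proof of Corollary~\ref{coro1}. By formula~(\ref{pav4.5}),
\[
\mathscr{L} \,=\, -T P T^{-1} - \tfrac{1}{2}\textrm{Tr}(B)
\]
is a quadratic operator whose Weyl symbol~(\ref{pav6}) has non-negative real part and zero singular space, with the structural integer $k_0$ from~(\ref{pav31}) coinciding with the one from~(\ref{kal2}). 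Applied to $u = Tv$, and using the identity $\mathscr{L}(Tv) = -T(Pv) - \tfrac{1}{2}\textrm{Tr}(B)\,Tv$ together with the isometric property of $T$, Corollary~\ref{th0} yields, for every $v \in D(P)$,
\[
\|\langle(x,D_x)\rangle^{2/(2k_0+1)} Tv\|_{L^2} \,\leq\, C_1\big(\|Pv\|_{L^2_{\mu}} + \|v\|_{L^2_{\mu}}\big).
\]
The task is then to identify the left-hand side with the Sobolev norm $\|v\|_{H^{2/(2k_0+1)}(\rr^n,d\mu)}$.

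For this identification I would conjugate the operator $I - A$ (which defines the weighted Sobolev norm through~(\ref{hj12})) by $T$. The self-adjoint Ornstein--Uhlenbeck operator $A$ of~(\ref{hj10}) corresponds in~(\ref{jen0}) to the choice $Q = Q_{\infty}$ and drift matrix $-\tfrac{1}{2} I_n$, for which the cross coefficient $\tfrac{1}{2}Q_{\infty}Q_{\infty}^{-1} - \tfrac{1}{2} I_n$ appearing in~(\ref{pav5}) vanishes. A direct application of~(\ref{pav4.5}) and~(\ref{pav5}) therefore gives
\[
K \,:=\, T(I-A)T^{-1} \,=\, \tfrac{1}{2}|Q_{\infty}^{1/2} D_x|^2 + \tfrac{1}{8}|Q_{\infty}^{-1/2} x|^2 + \Big(1 - \tfrac{n}{4}\Big),
\]
which is a positive self-adjoint operator on $L^2(\rr^n, dx)$ with elliptic, positive-definite quadratic Weyl symbol; positivity $K \geq I$ follows from $I - A \geq I$ on $L^2_{\mu}$, since $A$ is non-positive and $T$ is unitary. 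Using the isometry of $T$ once more,
\[
\|v\|_{H^{2/(2k_0+1)}(\rr^n,d\mu)} \,=\, \|(I-A)^{1/(2k_0+1)} v\|_{L^2_{\mu}} \,=\, \|K^{1/(2k_0+1)} Tv\|_{L^2}.
\]

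The argument will be closed by comparing $K$ with $H := \langle(x,D_x)\rangle^2 = 1 + x^2 + D_x^2$. Both are positive self-adjoint operators whose common domain is the Shubin space $\mathcal{H}^2$ defined in~(\ref{jk1}), with equivalent graph norms (the Weyl symbols of $K$ and $H$ are elliptic second-order polynomials whose leading quadratic parts control each other, and $K, H \geq I$ absorbs the zero-order terms). By the interpolation identity already exploited in~(\ref{jk5}) and~(\ref{jk5.t}),
\[
D(K^s) \,=\, \big[L^2(\rr^n), D(K)\big]_s \,=\, \big[L^2(\rr^n), \mathcal{H}^2\big]_s \,=\, \mathcal{H}^{2s} \,=\, D(H^s),
\]
with equivalent norms for every $s \in [0,1]$. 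Specializing to $s = 1/(2k_0+1)$ yields $\|K^{1/(2k_0+1)} Tv\|_{L^2} \lesssim \|\langle(x,D_x)\rangle^{2/(2k_0+1)} Tv\|_{L^2}$, and combining this norm equivalence with the estimate from the first paragraph produces the desired subelliptic estimate on $L^2_{\mu}$. The main technical point is exactly the verification that $D(K) = \mathcal{H}^2$ with equivalent graph norm, which reduces to the standard fact that any two elliptic positive-definite quadratic Weyl operators share the harmonic-oscillator domain with equivalent norms; the rest is a matter of bookkeeping through the isometry $T$ and of invoking the interpolation result for fractional powers of positive self-adjoint operators.
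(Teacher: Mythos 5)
Your argument is correct and is essentially the proof the paper points to: the paper simply cites the proof of Proposition~2.3 in~\cite{OPPS2}, whose route is the same — conjugate $P$ by the ground-state isometry $T$, apply Corollary~\ref{th0} to the resulting quadratic operator $\mathscr{L}=q^w(x,D_x)$ (using the identification of $k_0$ in (\ref{pav30})--(\ref{pav31})), and identify $H^{2/(2k_0+1)}(\rr^n,d\mu)$ with the conjugated Shubin scale by comparing $T(I-A)T^{-1}$ with the harmonic oscillator and interpolating fractional powers of positive selfadjoint operators. Your explicit computation of $T(I-A)T^{-1}=\tfrac{1}{2}|Q_{\infty}^{1/2}D_x|^2+\tfrac{1}{8}|Q_{\infty}^{-1/2}x|^2+1-\tfrac{n}{4}$ and the domain-equality/interpolation bookkeeping correctly supply the details that the paper leaves to the citation.
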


\medskip

\begin{proof}
See the proof of Proposition~2.3 in~\cite{OPPS2}.
\end{proof}

Before applying Theorem~\ref{th-1} to the study of degenerate hypoelliptic Ornstein-Uhlenbeck operators, we need to introduce auxiliary operators.
To that end, we define the matrices
\begin{equation}\label{qwe1}
\mathfrak{Q}_j=2^{-4j}(QQ_{\infty}^{-1}+2B)^jQ(Q_{\infty}^{-1}Q+2B^T)^j, \quad \mathfrak{B}_j=-2^{-1}\mathfrak{Q}_jQ_{\infty}^{-1}, 
\end{equation} 
and we consider the following Ornstein-Uhlenbeck operators 
\begin{equation}\label{qwe2}
\mathfrak{P}_j=\frac{1}{2}\textrm{Tr}(\mathfrak{Q}_j\nabla_x^2)+\langle \mathfrak{B}_jx,\nabla_x\rangle, \quad x \in \rr^n,
\end{equation} 
for any $0 \leq j \leq k_0$. Notice that the matrix $\mathfrak{Q}_j$ is symmetric positive semidefinite.
These Ornstein-Uhlenbeck operators are equipped with the domains
\begin{equation}\label{qwe3}
D(\mathfrak{P}_j)=\{u \in L_{\mu}^2 : \mathfrak{P}_ju \in L_{\mu}^2\}.
\end{equation}
As in (\ref{pav4.5}), we can associate to these Ornstein-Uhlenbeck operators some quadratic operators
\begin{equation}\label{qwe4}
\mathfrak{L}_ju=-\sqrt{\rho}\mathfrak{P}_j\big((\sqrt{\rho})^{-1}u\big)-\frac{1}{2}\textrm{Tr}(\mathfrak{B}_j)u,
\end{equation}
for all $0 \leq j \leq k_0$, which are computed explicitly in the following lemma:

\medskip

\begin{lemma}\label{fghj1}
For all $0 \leq j \leq k_0$, the Weyl symbol of the quadratic operator
$$\mathfrak{L}_ju=-\sqrt{\rho}\mathfrak{P}_j\big((\sqrt{\rho})^{-1}u\big)-\frac{1}{2}\emph{\textrm{Tr}}(\mathfrak{B}_j)u=\tilde{r}_j^w(x,D_x)u,$$
is given by
$$\tilde{r}_j(X)=\emph{\textrm{Re }}q((\emph{\textrm{Im }}F)^jX)
=\frac{1}{2^{4j+1}}|Q^{\frac{1}{2}}(Q_{\infty}^{-1}Q+2B^T)^j\xi|^2+\frac{1}{2^{4j+3}}|Q^{\frac{1}{2}}Q_{\infty}^{-1}(QQ_{\infty}^{-1}+2B)^jx|^2,$$
with $X=(x,\xi) \in \rr^{2n}$, where $F$ is the Hamilton map defined in (\ref{F}).
\end{lemma}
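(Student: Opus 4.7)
The plan is to compute $\tilde{r}_j$ directly in two independent ways and then match the two expressions. On one hand, I apply the conjugation procedure that produced formula~(\ref{pav5}), but now to the Ornstein--Uhlenbeck operator $\mathfrak{P}_j$ with coefficients $(\mathfrak{Q}_j, \mathfrak{B}_j)$ in place of $(Q,B)$. Using the identity (\ref{dfg5}), $\sqrt{\rho}\,\partial_{x_i}((\sqrt{\rho})^{-1}u) = (\partial_{x_i}+\tfrac{1}{2}(Q_\infty^{-1}x)_i)u$, and expanding the second-order part, one obtains
\[
\mathfrak{L}_j = \tfrac{1}{2}|\mathfrak{Q}_j^{1/2}D_x|^2 + \tfrac{1}{8}|\mathfrak{Q}_j^{1/2}Q_\infty^{-1}x|^2 - i\langle(\tfrac{1}{2}\mathfrak{Q}_jQ_\infty^{-1}+\mathfrak{B}_j)x, D_x\rangle,
\]
after a routine book-keeping: the constants $-\tfrac{1}{4}\textrm{Tr}(\mathfrak{Q}_jQ_\infty^{-1})$ (from the expansion of the Laplacian) and $-\tfrac{1}{2}\textrm{Tr}(\mathfrak{B}_j)$ (from the definition (\ref{qwe4})) cancel thanks to the choice $\mathfrak{B}_j = -\tfrac{1}{2}\mathfrak{Q}_jQ_\infty^{-1}$ in (\ref{qwe1}). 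That same choice also annihilates the first-order drift term entirely. Consequently, $\mathfrak{L}_j = \tilde{r}_j^w(x,D_x)$ with the \emph{real-valued} Weyl symbol
\[
\tilde{r}_j(x,\xi) = \tfrac{1}{2}\langle\mathfrak{Q}_j\xi,\xi\rangle + \tfrac{1}{8}\langle Q_\infty^{-1}\mathfrak{Q}_jQ_\infty^{-1}x,x\rangle.
\]

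On the other hand, I compute $\textrm{Re }q((\textrm{Im }F)^jX)$ by reading $\textrm{Im }F$ directly off from (\ref{F}): since the off-diagonal blocks $\tfrac{1}{2}Q$ and $-\tfrac{1}{8}Q_\infty^{-1}QQ_\infty^{-1}$ are real, $\textrm{Im }F$ is block-diagonal,
\[
\textrm{Im }F = \begin{pmatrix} -\tfrac{1}{4}(QQ_\infty^{-1}+2B) & 0 \\ 0 & \tfrac{1}{4}(Q_\infty^{-1}Q+2B^T)\end{pmatrix},
\]
the lower-right block being the transpose of the negative of the upper-left (using $Q^T=Q$, $Q_\infty^T=Q_\infty$). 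Iterating gives $(\textrm{Im }F)^j(x,\xi) = \big((-1)^j 2^{-2j}(QQ_\infty^{-1}+2B)^jx,\; 2^{-2j}(Q_\infty^{-1}Q+2B^T)^j\xi\big)$, and substituting into $\textrm{Re }q(y,\eta) = \tfrac{1}{2}|Q^{1/2}\eta|^2 + \tfrac{1}{8}|Q^{1/2}Q_\infty^{-1}y|^2$ (read off from (\ref{pav6})) produces the explicit expression on the right of the lemma.

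It remains to match the two outputs. The $\xi$-term is immediate from the definition (\ref{qwe1}) of $\mathfrak{Q}_j$, which is arranged so that $\tfrac{1}{2}\langle\mathfrak{Q}_j\xi,\xi\rangle = \tfrac{1}{2^{4j+1}}|Q^{1/2}(Q_\infty^{-1}Q+2B^T)^j\xi|^2$. The $x$-term is where the only real work lies: the form $Q_\infty^{-1}\mathfrak{Q}_jQ_\infty^{-1}$ coming from the conjugation must be rewritten as the one-sided sandwich appearing in the target formula. The key algebraic input is the intertwining identity
\[
Q_\infty^{-1}(QQ_\infty^{-1}+2B) = -(Q_\infty^{-1}Q+2B^T)Q_\infty^{-1},
\]
which follows from the steady-state Lyapunov equation $Q + BQ_\infty + Q_\infty B^T = 0$ of (\ref{pav10}) upon noting that both sides reduce to $Q_\infty^{-1}B - B^TQ_\infty^{-1}$. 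Iterating yields $Q_\infty^{-1}(QQ_\infty^{-1}+2B)^j = (-1)^j(Q_\infty^{-1}Q+2B^T)^jQ_\infty^{-1}$, and plugging this into $Q_\infty^{-1}\mathfrak{Q}_jQ_\infty^{-1} = 2^{-4j}Q_\infty^{-1}(QQ_\infty^{-1}+2B)^jQ(Q_\infty^{-1}Q+2B^T)^jQ_\infty^{-1}$ gives $\tfrac{1}{8}\langle Q_\infty^{-1}\mathfrak{Q}_jQ_\infty^{-1}x,x\rangle = \tfrac{1}{2^{4j+3}}|Q^{1/2}Q_\infty^{-1}(QQ_\infty^{-1}+2B)^jx|^2$, closing the match. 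I expect this Lyapunov intertwining step to be the only nontrivial obstacle; everything else is direct matrix manipulation or a straightforward transcription of the conjugation computation already performed for (\ref{pav5}).
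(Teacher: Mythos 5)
Your proposal is correct and follows essentially the same route as the paper: the conjugation computation via (\ref{dfg5}) with the choice $\mathfrak{B}_j=-\tfrac{1}{2}\mathfrak{Q}_jQ_{\infty}^{-1}$ killing the drift and the trace constants (the paper's (\ref{pav12})--(\ref{pav13})), followed by the Lyapunov-derived intertwining identity (\ref{asd1})--(\ref{asd2}) to recast the $x$-part and identify the result with $\textrm{Re}\,q((\textrm{Im}\,F)^jX)$ read off from (\ref{pav6}) and (\ref{F}). Your only deviation is organizational (computing both sides and matching, rather than transforming one side), and your arithmetic, including the block-diagonal form of $\textrm{Im}\,F$ and the $(-1)^j$ bookkeeping, checks out.
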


\medskip

\begin{proof}
We deduce from (\ref{pav2}), (\ref{dfg5}), (\ref{qwe2}) and (\ref{qwe4}) that for all $0 \leq j \leq k_0$,
\begin{align*}
\mathfrak{L}_ju =& \ -e^{-\frac{1}{4}\langle Q_{\infty}^{-1}x,x\rangle}\Big(\frac{1}{2}\sum_{l,k=1}^{n}\mathfrak{q}(j)_{l,k}\partial_{x_l,x_k}^2+\sum_{l,k=1}^n\mathfrak{b}(j)_{l,k}x_k\partial_{x_l}\Big)(e^{\frac{1}{4}\langle Q_{\infty}^{-1}x,x\rangle}u)-\frac{1}{2}\textrm{Tr}(\mathfrak{B}_j)u\\
 = & \  -\frac{1}{2}\sum_{l,k=1}^{n}\mathfrak{q}(j)_{l,k}\Big(\partial_{x_l}+\frac{1}{2}(Q_{\infty}^{-1}x)_l\Big)\Big(\partial_{x_k}+\frac{1}{2}(Q_{\infty}^{-1}x)_k\Big)u\\
 & \  -\sum_{l,k=1}^n\mathfrak{b}(j)_{l,k}x_k\Big(\partial_{x_l}+\frac{1}{2}(Q_{\infty}^{-1}x)_l\Big)u-\frac{1}{2}\textrm{Tr}(\mathfrak{B}_j)u,
\end{align*}
with $\mathfrak{Q}_j=\big(\mathfrak{q}(j)_{l,k}\big)_{1 \leq l,k \leq n}$ and $\mathfrak{B}_j=\big(\mathfrak{b}(j)_{l,k}\big)_{1 \leq l,k \leq n}$.
Furthermore, since
\begin{equation}\label{yp1}
\sum_{l,k=1}^{n}\mathfrak{q}(j)_{l,k}\partial_{x_l}\big((Q_{\infty}^{-1}x)_k\big)=\textrm{Tr}(\mathfrak{Q}_jQ_{\infty}^{-1}),
\end{equation}
it follows that for all $0 \leq j \leq k_0$,
\begin{multline*}
\mathfrak{L}_j =-\frac{1}{2}\langle \mathfrak{Q}_j\nabla_x,\nabla_x\rangle-\frac{1}{8}\langle \mathfrak{Q}_jQ_{\infty}^{-1}x,Q_{\infty}^{-1}x\rangle-\frac{1}{2}\langle \mathfrak{B}_jx,Q_{\infty}^{-1}x\rangle
\\ -\langle \mathfrak{B}_jx,\nabla_x\rangle-\frac{1}{2}\langle  \mathfrak{Q}_jQ_{\infty}^{-1}x,\nabla_x\rangle-\frac{1}{2}\textrm{Tr}(\mathfrak{B}_j)-\frac{1}{4}\textrm{Tr}(\mathfrak{Q}_jQ_{\infty}^{-1}).
\end{multline*}
By recalling the notation (\ref{not}), we obtain that for all $0 \leq j \leq k_0$,
\begin{multline}\label{pav12}
\mathfrak{L}_j = \frac{1}{2}\langle \mathfrak{Q}_jD_x,D_x\rangle-\frac{1}{8}\langle \mathfrak{Q}_jQ_{\infty}^{-1}x,Q_{\infty}^{-1}x\rangle-\frac{1}{2}\langle \mathfrak{B}_jx,Q_{\infty}^{-1}x\rangle \\  -i\langle \mathfrak{B}_jx,D_x\rangle-\frac{i}{2}\langle \mathfrak{Q}_jQ_{\infty}^{-1}x,D_x\rangle-\frac{1}{2}\textrm{Tr}(\mathfrak{B}_j)-\frac{1}{4}\textrm{Tr}(\mathfrak{Q}_jQ_{\infty}^{-1}),
\end{multline}
with $D_x=i^{-1}\nabla_x$.
We deduce from (\ref{qwe1}) that for all $0 \leq j \leq k_0$,
\begin{multline}\label{pav13}
\mathfrak{L}_j = \frac{1}{2}\langle \mathfrak{Q}_jD_x,D_x\rangle+\frac{1}{8}\langle \mathfrak{Q}_jQ_{\infty}^{-1}x,Q_{\infty}^{-1}x\rangle\\
= \frac{1}{2^{4j+1}}|Q^{\frac{1}{2}}(Q_{\infty}^{-1}Q+2B^T)^jD_x|^2+ \frac{1}{2^{4j+3}}|Q^{\frac{1}{2}}(Q_{\infty}^{-1}Q+2B^T)^jQ_{\infty}^{-1}x|^2.
\end{multline}
We notice from the steady state equation (\ref{pav10}) that 
\begin{equation}\label{asd1}
(Q_{\infty}^{-1}Q+2B^T)Q_{\infty}^{-1}=-Q_{\infty}^{-1}(QQ_{\infty}^{-1}+2B).
\end{equation}
We deduce from (\ref{asd1}) that for all $0 \leq k \leq k_0$,
\begin{equation}\label{asd2}
(Q_{\infty}^{-1}Q+2B^T)^jQ_{\infty}^{-1}=(-1)^jQ_{\infty}^{-1}(QQ_{\infty}^{-1}+2B)^j.
\end{equation}
It follows from (\ref{pav6}), (\ref{F}), (\ref{pav13}) and (\ref{asd1}) that the Weyl symbol of the quadratic operator $\mathfrak{L}_j=\tilde{r}_j^w(x,D_x)$ is given by 
$$\tilde{r}_j(X)=\textrm{Re }q((\textrm{Im }F)^jX)=\frac{1}{2^{4j+1}}|Q^{\frac{1}{2}}(Q_{\infty}^{-1}Q+2B^T)^j\xi|^2+ \frac{1}{2^{4j+3}}|Q^{\frac{1}{2}}Q_{\infty}^{-1}(QQ_{\infty}^{-1}+2B)^jx|^2,$$
with $X=(x,\xi) \in \rr^{2n}$. This ends the proof of Lemma~\ref{fghj1}.
\end{proof}

Next, we consider the Ornstein-Uhlenbeck operators 
\begin{equation}\label{qwe5}
\mathscr{P}_k=\frac{1}{2}\textrm{Tr}(\mathscr{Q}_k\nabla_x^2)+\langle \mathscr{B}_kx,\nabla_x\rangle, \quad 0 \leq k \leq k_0,
\end{equation} 
acting on the space $L_{\mu}^2$, with
\begin{equation}\label{qwe7}
\mathscr{Q}_k=\sum_{j=0}^k\mathfrak{Q}_j, \qquad \mathscr{B}_k=\sum_{j=0}^k\mathfrak{B}_j,
\end{equation} 
and equipped with the domains
\begin{equation}\label{qwe6}
D(\mathscr{P}_k)=\big\{u \in L_{\mu}^2 : \mathscr{P}_ku \in L_{\mu}^2\big\}.
\end{equation}
It follows from (\ref{rg0}), (\ref{rg2}), (\ref{qwe2}), (\ref{qwe5}), (\ref{qwe7}) and Lemma~\ref{fghj1} that the positive operator defined in (\ref{rg1}) is given by
\begin{equation}\label{qwe8}
\Lambda_k^2u=\sqrt{\rho}\Big(1-\mathscr{P}_k-\frac{1}{2}\textrm{Tr}(\mathscr{B}_k)\Big)\big((\sqrt{\rho})^{-1}u\big),
\end{equation}
for any $0 \leq k \leq k_0$. As in (\ref{tio1}), we can consider the fractional powers of these positive operators 
\begin{equation}\label{qwe9.33}
\Big(1-\mathscr{P}_0-\frac{1}{2}\textrm{Tr}(\mathscr{B}_0)\Big)^{\frac{1}{2}}u=\sqrt{\rho}^{-1}\Lambda_0(\sqrt{\rho}u)
\end{equation} 
and
\begin{equation}\label{qwe9}
\Big(1-\mathscr{P}_k-\frac{1}{2}\textrm{Tr}(\mathscr{B}_k)\Big)^{\frac{1}{2k+1}}u=\sqrt{\rho}^{-1}\Lambda_k^{\frac{2}{2k+1}}(\sqrt{\rho}u), \quad 1 \leq k \leq k_0.
\end{equation} 
The application of Theorem~\ref{th-1} provides the following result: 

\medskip

\begin{corollary}\label{th-12}
Let 
$$P=\frac{1}{2}\emph{\textrm{Tr}}(Q\nabla_x^2)+\langle Bx,\nabla_x\rangle, \quad x \in \rr^n,$$ 
be a hypoelliptic Ornstein-Uhlenbeck operator, which admits the invariant measure $d\mu(x)=\rho(x) dx$.
Then, there exists a positive constant $C>0$ such that for all $v \in D(P)$,
$$\Big\|\Big(1-\mathscr{P}_0-\frac{1}{2}\emph{\textrm{Tr}}(\mathscr{B}_0)\Big)^{\frac{1}{2}}v\Big\|_{L_{\mu}^2}+\sum_{k=1}^{k_0}\Big\|\Big(1-\mathscr{P}_k-\frac{1}{2}\emph{\textrm{Tr}}(\mathscr{B}_k)\Big)^{\frac{1}{2k+1}}v\Big\|_{L_{\mu}^2} \leq C(\|Pv\|_{L_{\mu}^2}+\|v\|_{L^2_{\mu}}),$$
with $\|\cdot\|_{L^2_{\mu}}$ the $L^2(\rr^n,d\mu)$-norm, 
where $0 \leq k_0 \leq n-1$ is the smallest integer satisfying (\ref{kal2}).
\end{corollary}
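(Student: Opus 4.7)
The plan is to transfer the subelliptic estimate of Theorem~\ref{th-1} for the accretive quadratic operator $\mathscr{L}$ to the Ornstein-Uhlenbeck operator $P$ via the unitary map $T : L^2_\mu \to L^2$, $Tv = \sqrt{\rho}\,v$, introduced in (\ref{1w1}). The quadratic symbol $q$ of $\mathscr{L} = q^w(x,D_x)$ in (\ref{pav6}) has non-negative real part and, by (\ref{sd1}) together with the agreement (\ref{pav30})--(\ref{pav31}), possesses a zero singular space with structural parameter exactly $k_0$. Theorem~\ref{th-1} therefore supplies a positive constant $C_1>0$ such that
$$\|\Lambda_0 u\|_{L^2} + \sum_{k=1}^{k_0} \big\|\Lambda_k^{\frac{2}{2k+1}} u \big\|_{L^2} \leq C_1 \big(\|\mathscr{L} u\|_{L^2} + \|u\|_{L^2}\big), \qquad u \in D(\mathscr{L}).$$

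I would then specialize this estimate to $u = Tv$ for an arbitrary $v \in D(P)$. The conjugation identity (\ref{pav4.5}) gives $\mathscr{L}(Tv) = -T(Pv) - \tfrac{1}{2}\textrm{Tr}(B)\, Tv$, which belongs to $L^2$; hence $Tv \in D(\mathscr{L})$, and the isometric character of $T$ yields at once
$$\|\mathscr{L}(Tv)\|_{L^2} + \|Tv\|_{L^2} \leq \|Pv\|_{L^2_\mu} + \big(1 + \tfrac{1}{2}|\textrm{Tr}(B)|\big)\|v\|_{L^2_\mu}.$$
For the left-hand side, the defining identities (\ref{qwe9.33}) and (\ref{qwe9}) rewrite equivalently as
$$\Lambda_0 (Tv) = T\Big(1 - \mathscr{P}_0 - \tfrac{1}{2}\textrm{Tr}(\mathscr{B}_0)\Big)^{\frac{1}{2}} v, \qquad \Lambda_k^{\frac{2}{2k+1}} (Tv) = T\Big(1 - \mathscr{P}_k - \tfrac{1}{2}\textrm{Tr}(\mathscr{B}_k)\Big)^{\frac{1}{2k+1}} v,$$
for $1 \leq k \leq k_0$. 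Taking $L^2$-norms and using once more the isometry of $T$ converts the left-hand side of Theorem~\ref{th-1} into exactly the left-hand side of the statement, now measured in $L^2_\mu$. Combining the two bounds produces the corollary.

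The only delicate point is to legitimize the functional-calculus identities (\ref{qwe9.33}) and (\ref{qwe9}) used for the fractional powers: this relies on the relation (\ref{qwe8}), which shows that the positive selfadjoint operator $\Lambda_k^2$ on $L^2$ is conjugated via $T$ into the operator $1 - \mathscr{P}_k - \tfrac{1}{2}\textrm{Tr}(\mathscr{B}_k)$ on $L^2_\mu$. The latter is therefore positive selfadjoint and its fractional powers intertwine with $T$ in the required way. Once this is noted, the remainder of the argument is a transparent transport of Theorem~\ref{th-1} across the isometry $T$ and requires no further analytic input; the main obstacle is really bookkeeping, namely matching the various operators from Section~\ref{orn} with those in the abstract quadratic framework.
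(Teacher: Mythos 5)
Your proposal is correct and follows essentially the same route as the paper's proof: both apply Theorem~\ref{th-1} to the conjugated quadratic operator $\mathscr{L}=q^w(x,D_x)$ with zero singular space and structural parameter $k_0$, and then transport the estimate through the isometry $v\mapsto\sqrt{\rho}\,v$ using (\ref{pav4.5}), (\ref{1w1}) and the intertwining identities (\ref{qwe9.33})--(\ref{qwe9}). Your extra remark justifying the fractional-power identities via (\ref{qwe8}) is a sound elaboration of what the paper takes as its setup.
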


\medskip

\begin{proof}
Recalling that the quadratic operator $\mathscr{L}=q^w(x,D_x)$ defined in (\ref{pav5}) has a Weyl symbol with a zero singular space $S=\{0\}$ and a non-negative real part $\textrm{Re }q \geq 0$, we deduce from Theorem~\ref{th-1} that there exists a positive constant $C>0$ such that for all $u \in D(q^w)$,
\begin{equation}\label{sdf5}
\|\Lambda_0u\|_{L^2(\rr^n)}+\sum_{k=1}^{k_0}\|\Lambda_k^{\frac{2}{2k+1}}u\|_{L^2(\rr^n)} \leq C(\|q^w(x,D_x)u\|_{L^2(\rr^n)}+\|u\|_{L^2(\rr^n)}),
\end{equation}
where $0 \leq k_0 \leq 2n-1$ is the smallest integer satisfying (\ref{e2}), and where the operators $\Lambda_k^2$ are defined in (\ref{rg1}). It follows from (\ref{xc5}), (\ref{pav4.5}), (\ref{1w1}), (\ref{qwe9.33}), (\ref{qwe9}) and (\ref{sdf5}) that
$$\forall v \in D(P), \quad \|\sqrt{\rho}^{-1}\Lambda_0(\sqrt{\rho}v)\|_{L_{\mu}^2}+\sum_{k=1}^{k_0}\|\sqrt{\rho}^{-1}\Lambda_k^{\frac{2}{2k+1}}(\sqrt{\rho}v)\|_{L_{\mu}^2} \leq C(\|Pv\|_{L_{\mu}^2}+\|v\|_{L^2_{\mu}}),$$
that is, for all $v \in D(P)$, 
$$\Big\|\Big(1-\mathscr{P}_0-\frac{1}{2}\textrm{Tr}(\mathscr{B}_0)\Big)^{\frac{1}{2}}v\Big\|_{L_{\mu}^2}+\sum_{k=1}^{k_0}\Big\|\Big(1-\mathscr{P}_k-\frac{1}{2}\textrm{Tr}(\mathscr{B}_k)\Big)^{\frac{1}{2k+1}}v\Big\|_{L_{\mu}^2} \leq C(\|Pv\|_{L_{\mu}^2}+\|v\|_{L^2_{\mu}}).$$
This ends the proof of Corollary~\ref{th-12}.
\end{proof}

The result of Corollary~\ref{th-12} is reminiscent from the continuous inclusion (\ref{ui5}) proven by Farkas and Lunardi in~\cite{lunardi1} (Theorem~8). Our result is however not as sharp as (\ref{ui5}), since the power associated to the elliptic directions should be equal to 1 instead of $1/2$.

\subsection{Applications to degenerate hypoelliptic Fokker-Planck operators}

We consider the Fokker-Planck operator
\begin{equation}\label{jen077}
\mathscr{P}=\frac{1}{2}\textrm{Tr}(Q\nabla_x^2)-\langle Bx,\nabla_x\rangle-\textrm{Tr}(B), \quad x \in \rr^n,
\end{equation}
where $Q=(q_{i,j})_{1 \leq i,j \leq n}$ and $B=(b_{i,j})_{1 \leq i,j \leq n}$ are real $n \times n$-matrices, with $Q$ symmetric positive semidefinite. We assume that the Kalman rank condition 
and the localization of the spectrum of~$B$,
\begin{equation}\label{kal177}
\textrm{Rank}[Q^{\frac{1}{2}},BQ^{\frac{1}{2}},\dots, B^{n-1}Q^{\frac{1}{2}}]=n, \qquad \sigma(B) \subset \mathbb{C}_-,
\end{equation} 
hold. As before, we consider 
\begin{equation}\label{pav277}
\rho(x)=\frac{1}{(2\pi)^{\frac{n}{2}}\sqrt{\det Q_{\infty}}}e^{-\frac{1}{2}\langle Q_{\infty}^{-1}x,x\rangle},
\end{equation}
with 
\begin{equation}\label{pav377}
Q_{\infty}=\int_0^{+\infty}e^{sB}Qe^{sB^T}ds.
\end{equation}
We aim at studying the operator $\mathscr{P}$ acting on the $L^2_{1/\rho}=L^2(\rr^n,\rho(x)^{-1}dx)$ space.
In the recent work~\cite{anton}, Arnold and Erb study the degenerate parabolic Fokker-Planck equation
$$\partial_tf=\frac{1}{2}\textrm{Tr}(Q\nabla_x^2)f-\langle Bx,\nabla_x\rangle f-\textrm{Tr}(B)f.$$
By employing a new entropy method based on a modified, non-degenerate entropy dissipation like functional, whose construction has inspired the proof of Theorem~\ref{th},
they establish some results of exponential convergence of the solution to the equilibrium and the exponential decay in relative entropy (logarithmic till quadratic) with a sharp rate.
More specifically, the result of~\cite{anton} (Theorem~4.8) is closely related to the following result obtained as a corollary of Theorem~\ref{th}: 

We begin by associating to the operator $\mathscr{P}$ acting on $L^2_{1/\rho}$, the quadratic operator $\mathfrak{L}$ acting on $L^2(\rr^n,dx)$ given by
\begin{equation}\label{pav4.577}
\mathfrak{L}u=-\sqrt{\rho}^{-1}\mathscr{P}\big(\sqrt{\rho}u\big)-\frac{1}{2}\textrm{Tr}(B)u.
\end{equation}
An explicit computation~\cite{OPPS2}, see formula (2.54), gives that
\begin{equation}\label{pav1277}
\mathfrak{L}=\frac{1}{2}|Q^{\frac{1}{2}}D_x|^2+\frac{1}{8}|Q^{\frac{1}{2}}Q_{\infty}^{-1}x|^2+i\Big\langle \Big(\frac{1}{2} QQ_{\infty}^{-1}+B\Big)x,D_x\Big\rangle.
\end{equation}
We observed in the proof of~\cite{OPPS2} (Proposition~2.8) that this operator $\mathfrak{L}$ coincides with the $L^2(\rr^n)$-adjoint of the operator (\ref{pav5}), and that its Weyl symbol is $\overline{q}$ the complex conjugate of the quadratic form (\ref{pav6}). The Hamilton map of $\overline{q}$ is $\overline{F}$ the complex conjugate of the Hamilton map (\ref{F}). 
By using that 
$$\textrm{Ker}\big(\textrm{Re }\overline{F}(\textrm{Im }\overline{F})^j\big)=\textrm{Ker}\big(\textrm{Re }F(\textrm{Im }F)^j\big),$$
it follows from (\ref{sd1}) that the quadratic operator $\mathfrak{L}=\overline{q}^w(x,D_x)$ has a Weyl symbol with a zero singular space and a non-negative real part. As before in (\ref{pav30}) and (\ref{pav31}), the smallest integer $0 \leq k_0 \leq 2n-1$ satisfying 
\begin{equation}\label{pav30.7}
\Big(\bigcap_{j=0}^{k_0}\textrm{Ker}
\big[\textrm{Re }F(\textrm{Im }F)^j \big]\Big)\cap \rr^{2n}=\{0\},
\end{equation}
corresponds exactly to the smallest integer $0 \leq k_0 \leq n-1$ satisfying
\begin{equation}\label{pav31.7}
\textrm{Rank}[Q^{\frac{1}{2}},BQ^{\frac{1}{2}},\dots, B^{k_0}Q^{\frac{1}{2}}]=n.
\end{equation}
Following (\ref{we20}), the vector subspaces (\ref{we1}) are anew given in this case by 
\begin{equation}\label{we20.7}
V_k=\Big[\Big(\Big(\bigcap_{j=0}^{k}\textrm{Ker}(QQ_{\infty}^{-1}B^j)\Big)\cap \rr^{n}\Big) \times \Big(\Big(\bigcap_{j=0}^{k}\textrm{Ker}\big(Q(B^T)^j\big)\Big)\cap \rr^{n}\Big)\Big]^{\perp}.
\end{equation}
We observe from (\ref{we21}), (\ref{we22}) and Lemma~\ref{li1} that 
\begin{equation}\label{we21.7}
(0,\xi_0) \in V_k \Leftrightarrow \xi_0 \in \mathcal{V}_k=\big(\textrm{Ran}(Q^{\frac{1}{2}})+...+\textrm{Ran}(B^kQ^{\frac{1}{2}})\big) \cap \rr^n,
\end{equation}
\begin{equation}\label{we22.7}
(x_0,0) \in V_k \Leftrightarrow  x_0 \in \tilde{\mathcal{V}}_k=Q_{\infty}^{-1}\mathcal{V}_k.
\end{equation}
We recall from (\ref{byebye}) and (\ref{byebye2}) that
$$\mathcal{V}_0 \subsetneq \mathcal{V}_1 \subsetneq ... \subsetneq \mathcal{V}_{k_0}=\rr^n, \qquad \tilde{\mathcal{V}}_0 \subsetneq \tilde{\mathcal{V}}_1 \subsetneq ... \subsetneq \tilde{\mathcal{V}}_{k_0}=\rr^n.$$

As in Section~\ref{orn}, we can consider for any $\xi_0 \in \rr^n$ the smallest integer $0 \leq k \leq k_0$ 
such that $\xi_0 \in \mathcal{V}_k$. This integer is denoted $k_{\xi_0}$ and is called the frequency index of the point $\xi_0 \in \rr^n$ with respect to the Fokker-Planck operator $\mathscr{P}$. Similarly, we can consider for any $x_0 \in \rr^n$ the smallest integer $0 \leq k \leq k_0$ 
such that $x_0 \in \tilde{\mathcal{V}}_k$. This integer is denoted $\tilde{k}_{x_0}$ and is called the space index of the point $x_0 \in \rr^n$ with respect to the Fokker-Planck operator $\mathscr{P}$.

The result of Theorem~\ref{th} allows one to obtain the following estimates for degenerate hypoelliptic Fokker-Planck operators:

\medskip

\begin{corollary}\label{coro3}
Let 
$$\mathscr{P}=\frac{1}{2}\emph{\textrm{Tr}}(Q\nabla_x^2)-\langle Bx,\nabla_x\rangle-\emph{\textrm{Tr}}(B), \quad x \in \rr^n,$$ 
be a Fokker-Planck operator satisfying the assumption (\ref{kal177}).  
Then, the global solution $v(t)=e^{t\mathscr{P}}v_0$ to the Cauchy problem
\begin{equation}\label{p2.7}
\left\lbrace\begin{array}{c}
\partial_tv=\mathscr{P}v,\\
v|_{t=0}=v_0 \in L^2_{1/\rho},
\end{array}\right.
\end{equation}
with $L^2_{1/\rho}=L^2(\rr^n,\rho(x)^{-1}dx)$, satisfies: $\exists C>0$, $\forall x_0 \in \rr^{n}$, $\forall \xi_0 \in \rr^{n}$, $\forall v_0 \in L^2_{1/\rho}$,    
$$\forall 0<t \leq 1, \quad \|\langle \xi_0,D_x \rangle e^{t\mathscr{P}}v_0\|_{L^2_{1/\rho}}\leq C|\xi_0| t^{-(2k_{\xi_0}+1)/2}\|v_0\|_{L^2_{1/\rho}},$$
$$\forall 0<t \leq 1, \quad \|\langle x_0,x \rangle e^{t\mathscr{P}}v_0\|_{L^2_{1/\rho}}\leq C|x_0| t^{-(2\tilde{k}_{x_0}+1)/2}\|v_0\|_{L^2_{1/\rho}},$$
$$\forall t \geq 1, \quad \|\langle \xi_0,D_x \rangle e^{t\mathscr{P}}v_0\|_{L^2_{1/\rho}}\leq C|\xi_0|\|v_0\|_{L^2_{1/\rho}},$$
$$\forall t \geq 1, \quad \|\langle x_0,x \rangle e^{t\mathscr{P}}v_0\|_{L^2_{1/\rho}}\leq C|x_0|\|v_0\|_{L^2_{1/\rho}},$$
with $\|\cdot\|_{L^2_{1/\rho}}$ the $L^2(\rr^n,\rho(x)^{-1}dx)$-norm, where $0 \leq k_{\xi_0} \leq k_0$ denotes the frequency index of $\xi_0 \in \rr^n$ with respect to $\mathscr{P}$, and where $0 \leq \tilde{k}_{x_0} \leq k_0$ denotes the space index of $x_0 \in \rr^n$ with respect to $\mathscr{P}$.
\end{corollary}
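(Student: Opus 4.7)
The strategy is to mirror the proof of Corollary~\ref{coro1}, replacing the conjugation $u\mapsto\sqrt{\rho}\,u$ by $u\mapsto \sqrt{\rho}^{-1}u$ so as to transfer estimates from the quadratic operator $\mathfrak{L}=\bar q^w(x,D_x)$ back to the Fokker--Planck semigroup $e^{t\mathscr{P}}$. Inverting relation (\ref{pav4.577}) gives
\[
v(t)=e^{t\mathscr{P}}v_0 = e^{-\frac{t}{2}\textrm{Tr}(B)}\sqrt{\rho}\,e^{-t\mathfrak{L}}(\sqrt{\rho}^{-1}v_0),\qquad t\geq 0.
\]
The key facts are that the map $T : L^2_{1/\rho}\to L^2(\rr^n),\ v\mapsto \sqrt{\rho}^{-1}v$ is an isometric bijection, that $\langle x_0,x\rangle$ commutes with multiplication by $\sqrt{\rho}$, and that (using $\partial_{x_j}\sqrt{\rho}=-\tfrac{1}{2}(Q_\infty^{-1}x)_j\sqrt{\rho}$) one has the Leibniz-type identity
\[
\langle \xi_0,D_x\rangle(\sqrt{\rho}\,w) = \sqrt{\rho}\Big(\langle \xi_0,D_x\rangle + \tfrac{i}{2}\langle Q_\infty^{-1}\xi_0,x\rangle\Big)w.
\]
Setting $u_0=\sqrt{\rho}^{-1}v_0$, with $\|u_0\|_{L^2}=\|v_0\|_{L^2_{1/\rho}}$, this yields
\[
\|\langle x_0,x\rangle v(t)\|_{L^2_{1/\rho}}=e^{-\frac{t}{2}\textrm{Tr}(B)}\|\langle x_0,x\rangle e^{-t\mathfrak{L}}u_0\|_{L^2},
\]
\[
\|\langle \xi_0,D_x\rangle v(t)\|_{L^2_{1/\rho}}\leq e^{-\frac{t}{2}\textrm{Tr}(B)}\bigl(\|\langle \xi_0,D_x\rangle e^{-t\mathfrak{L}}u_0\|_{L^2}+\tfrac{1}{2}\|\langle Q_\infty^{-1}\xi_0,x\rangle e^{-t\mathfrak{L}}u_0\|_{L^2}\bigr).
\]

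I then apply Theorem~\ref{th} to $\mathfrak{L}$. Since $\textrm{Re}\,\bar F(\textrm{Im}\,\bar F)^j = (-1)^j\textrm{Re}\,F(\textrm{Im}\,F)^j$, the zero singular space condition and the subspaces $V_k$ in (\ref{we1}) are the same for $\bar q$ as for $q$; in particular the integer $k_0$ is given by (\ref{pav31.7}) and the identifications (\ref{we21.7})--(\ref{we22.7}) apply. For a test direction $\xi_0$ with frequency index $k_{\xi_0}$, both $(0,\xi_0)$ and, via Lemma~\ref{li1}, $(Q_\infty^{-1}\xi_0,0)$ lie in $V_{k_{\xi_0}}$, and for $x_0$ with space index $\tilde k_{x_0}$ the direction $(x_0,0)$ lies in $V_{\tilde k_{x_0}}$. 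The spectral abscissa $\omega_0$ computed from the Hamilton map $\bar F$ of $\bar q$ agrees with the one computed from $F$ and is again equal to $-\tfrac{1}{2}\textrm{Tr}(B)$ by formula (\ref{conti6}).

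For $0<t\leq 1$, Theorem~\ref{th} produces the blow-up factors $t^{-(2k_{\xi_0}+1)/2}$ and $t^{-(2\tilde k_{x_0}+1)/2}$, while the prefactor $e^{-\frac{t}{2}\textrm{Tr}(B)}=e^{\omega_0 t}$ stays bounded by $e^{\omega_0}$; for $t\geq 1$ the same theorem furnishes the exponential factor $e^{-\omega_0 t}$, which exactly cancels the prefactor and leaves a constant bound. The only non-routine ingredients are the Leibniz-type computation for conjugation by $\sqrt{\rho}$ and the verification that the singular-space data attached to $\bar q$ coincide with those of $q$, so that the indices $k_{\xi_0}$ and $\tilde k_{x_0}$ introduced for $\mathscr{P}$ are legitimately fed into Theorem~\ref{th}. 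Once these points are clear the four estimates follow directly.
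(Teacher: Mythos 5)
Your proposal is correct and follows essentially the same route as the paper: conjugation by $\sqrt{\rho}$ via (\ref{pav4.577}), the isometry between $L^2_{1/\rho}$ and $L^2$, the Leibniz-type identity producing the extra term $\tfrac{i}{2}\langle Q_\infty^{-1}\xi_0,x\rangle$, the observation that $\overline{q}$ has the same singular-space data ($k_0$, the spaces $V_k$) as $q$ together with Lemma~\ref{li1} and (\ref{we21.7})--(\ref{we22.7}) to place $(0,\xi_0)$, $(Q_\infty^{-1}\xi_0,0)$, $(x_0,0)$ in the right $V_k$, and finally Theorem~\ref{th} with $\omega_0=-\tfrac{1}{2}\mathrm{Tr}(B)$ cancelling the prefactor $e^{-\frac{t}{2}\mathrm{Tr}(B)}$ for $t\geq 1$. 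The only cosmetic deviation is that you argue directly that the spectral abscissa of $\overline{F}$ equals that of $F$ instead of citing the computation (\ref{conti6.1}), which is harmless.
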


\medskip

\begin{proof}
By using that the quadratic operator
$$\mathfrak{L}=\frac{1}{2}|Q^{\frac{1}{2}}D_x|^2+\frac{1}{8}|Q^{\frac{1}{2}}Q_{\infty}^{-1}x|^2+i\Big\langle \Big(\frac{1}{2} QQ_{\infty}^{-1}+B\Big)x,D_x\Big\rangle,$$
has a Weyl symbol with a zero singular space $S=\{0\}$ and a non-negative real part, we can deduce from~\cite{HPS} (Theorem~1.2.1) that the evolution equation associated to the accretive operator $\mathfrak{L}$, 
$$\left\lbrace\begin{array}{c}
\partial_tu+\mathfrak{L}u=0,\\
u|_{t=0}=u_0 \in L^2(\rr^n,dx),
\end{array}\right.$$
is smoothing in the Schwartz space $\mathscr{S}(\rr^n)$ for any positive time $t>0$, that is,
\begin{equation}\label{p1.7}
\forall t>0, \quad u(t)=e^{-t\mathfrak{L}}u_0 \in \mathscr{S}(\rr^n),
\end{equation}
where $(e^{-t\mathfrak{L}})_{t \geq 0}$ denotes the contraction semigroup generated by $\mathfrak{L}$. It follows from (\ref{pav4.577}) that the solution to the evolution equation (\ref{p2.7}) is given by
\begin{equation}\label{bn1.7}
v(t)=e^{t\mathscr{P}}v_0=\sqrt{\rho}e^{-t(\mathfrak{L}+\frac{1}{2}\textrm{Tr}(B))}(\sqrt{\rho}^{-1}v_0)=e^{-\frac{t}{2}\textrm{Tr}(B)}\sqrt{\rho}e^{-t\mathfrak{L}}(\sqrt{\rho}^{-1}v_0),
\end{equation}
for all $t \geq 0$.
By using from (\ref{pav277}) that
$$\sqrt{\rho}^{-1}\partial_{x_i}(\sqrt{\rho}u)=e^{\frac{1}{4}\langle Q_{\infty}^{-1}x,x\rangle}\partial_{x_i}(e^{-\frac{1}{4}\langle Q_{\infty}^{-1}x,x\rangle}u)=\Big(\partial_{x_i}-\frac{1}{2}(Q_{\infty}^{-1}x)_i\Big)u,$$
where $(Q_{\infty}^{-1}x)_i$ denotes the $i^{\textrm{th}}$ coordinate,
since the matrix $Q_{\infty}^{-1}$ is symmetric, we notice that for all $t > 0$,
$$\langle \xi_0,D_x \rangle e^{t\mathscr{P}}v_0=e^{-\frac{t}{2}\textrm{Tr}(B)}\sqrt{\rho}
\Big(\langle \xi_0,D_x \rangle+\frac{i}{2}\langle \xi_0,Q_{\infty}^{-1}x \rangle \Big)e^{-t\mathfrak{L}}(\sqrt{\rho}^{-1}v_0).$$
By using that the mappings 
\begin{equation}\label{alw1}
\begin{array}{cc}
\mathfrak{T} :  L^2  & \rightarrow  L^2_{1/\rho}\\
\ u   & \mapsto  \sqrt{\rho}u
\end{array}, \quad \begin{array}{cc}
\mathfrak{T}^{-1} :  L^2_{1/\rho} & \rightarrow L^2\\
\quad \ u & \mapsto \sqrt{\rho}^{-1}u
\end{array},
\end{equation}
are isometric, we deduce from (\ref{conti5}) that 
\begin{multline}\label{dfg21}
\|\langle \xi_0,D_x \rangle e^{t\mathscr{P}}v_0\|_{L^2_{1/\rho}}\\
\leq e^{-\frac{t}{2}\textrm{Tr}(B)}\big(\|\langle \xi_0,D_x \rangle e^{-t\mathfrak{L}}(\sqrt{\rho}^{-1}v_0)\|_{L^2}
+\|\langle Q_{\infty}^{-1}\xi_0,x \rangle e^{-t\mathfrak{L}}(\sqrt{\rho}^{-1}v_0)\|_{L^2}\big).
\end{multline}
According to (\ref{pav4.577}), we observe from (\ref{conti5}) and~\cite{OPPS2} (Proposition~2.8), see also the formula (2.55) in~\cite{OPPS2}, that 
\begin{equation}\label{conti6.1}
\omega_0=\sum_{\substack{\lambda \in \sigma(\overline{F}) \\
-i \lambda \in \cc_+}}r_{\lambda}\textrm{Re}(-i\lambda)=-\frac{1}{2}\textrm{Tr}(B)>0,
\end{equation}
where $r_{\lambda}$ stands for the dimension of the space of generalized eigenvectors of $\overline{F}$ the complex conjugate of the Hamilton map (\ref{F}) in $\cc^{2n}$ associated to the eigenvalue $\lambda$.
According to (\ref{we21.7}) and (\ref{we22.7}), the property $\xi_0 \in \mathcal{V}_{k_{\xi_0}}$ implies that 
\begin{equation}\label{yy1.7}
(0,\xi_0) \in V_{k_{\xi_0}}, \quad (Q_{\infty}^{-1}\xi_0,0) \in V_{k_{\xi_0}},
\end{equation}
since $Q_{\infty}^{-1}\xi_0 \in \tilde{\mathcal{V}}_{k_{\xi_0}}$,
where $0 \leq k_{\xi_0} \leq k_0$ denotes the frequency index of $\xi_0 \in \rr^n$ with respect to $\mathscr{P}$. The indices with respect to the singular space of the quadratic operator $\mathfrak{L}$ of both directions $(0,\xi_0)$ and $(Q_{\infty}^{-1}\xi_0,0)$, are therefore lower than $k_{\xi_0}$.
We deduce from (\ref{alw1}), (\ref{dfg21}), (\ref{conti6.1}), (\ref{yy1.7}) and Theorem~\ref{th} that there exists a positive constant $C>0$ such that for all $\xi_0 \in \rr^{n}$, $v_0 \in L^2_{1/\rho}$,  
$$\forall 0<t \leq 1, \quad \|\langle \xi_0,D_x \rangle e^{t\mathscr{P}}v_0\|_{L^2_{1/\rho}}\leq C|\xi_0| t^{-(2k_{\xi_0}+1)/2}\|v_0\|_{L^2_{1/\rho}},$$
$$\forall t \geq 1, \quad \|\langle \xi_0,D_x \rangle e^{t\mathscr{P}}v_0\|_{L^2_{1/\rho}}\leq C|\xi_0|\|v_0\|_{L^2_{1/\rho}}.$$
On the other hand, we deduce from (\ref{bn1.7}) that for all $t > 0$, $v_0 \in L^2_{1/\rho}$,
\begin{equation}\label{3bn1.55}
\langle x_0,x \rangle e^{t\mathscr{P}}v_0=e^{-\frac{t}{2}\textrm{Tr}(B)}\sqrt{\rho}\langle x_0,x \rangle e^{-t\mathfrak{L}}(\sqrt{\rho}^{-1}v_0).
\end{equation}
We deduce from (\ref{alw1}) and (\ref{3bn1.55}) that for all $t > 0$, $v_0 \in L^2_{1/\rho}$,
\begin{equation}\label{3bn2.55}
\|\langle x_0,x \rangle e^{t\mathscr{P}}v_0\|_{L^2_{1/\rho}}=e^{-\frac{t}{2}\textrm{Tr}(B)}\|\langle x_0,x \rangle e^{-t\mathfrak{L}}(\sqrt{\rho}^{-1}v_0)\|_{L^2}.
\end{equation}
We observe from (\ref{we22.7}) that the property $x_0 \in \tilde{\mathcal{V}}_{\tilde{k}_{x_0}}$ implies that $(x_0,0) \in V_{\tilde{k}_{x_0}}$,
where $0 \leq \tilde{k}_{x_0} \leq k_0$ denotes the space index of $x_0 \in \rr^n$ with respect to $\mathscr{P}$.
The index with respect to the singular space of the quadratic operator $\mathfrak{L}$ of the direction $(x_0,0)$ is therefore lower than $\tilde{k}_{x_0}$.
We deduce from (\ref{alw1}), (\ref{conti6.1}), (\ref{3bn2.55}) and Theorem~\ref{th} that there exists a positive constant $C>0$ such that for all $x_0 \in \rr^{n}$, $v_0 \in L^2_{1/\rho}$,  
$$\forall 0<t \leq 1, \quad \|\langle x_0,x \rangle e^{t\mathscr{P}}v_0\|_{L^2_{1/\rho}}\leq C|x_0| t^{-(2\tilde{k}_{x_0}+1)/2}\|v_0\|_{L^2_{1/\rho}},$$
$$\forall t \geq 1, \quad \|\langle x_0,x \rangle e^{t\mathscr{P}}v_0\|_{L^2_{1/\rho}}\leq C|x_0|\|v_0\|_{L^2_{1/\rho}}.$$
This ends the proof of Corollary~\ref{coro3}.
\end{proof}

By mimicking the arguments given in the proof of Corollary~\ref{coro111},  we can deduce from Corollary~\ref{thj} the following result:  

\medskip

\begin{corollary}\label{coro111.55}
Under the assumptions of Corollary~\ref{coro3}, there exists a positive constant $C>1$ such that for all $m \geq 1$, $X_1=(x_1,\xi_1)  \in \rr^{2n}$, ..., $X_m=(x_m,\xi_m)  \in \rr^{2n}$, $v_0 \in L^2(\rr^n,\rho(x)^{-1}dx)$,
\begin{multline*}
\forall 0<t \leq 1, \quad \|(\langle x_1, x \rangle+\langle \xi_1,D_x \rangle)\ ...\ (\langle x_m, x \rangle+\langle \xi_m,D_x \rangle) e^{t\mathscr{P}}v_0\|_{L^2_{1/\rho}}\\
\leq C^{m}(m!)^{\frac{2k_0+1}{2}}\Big(\prod_{j=1}^m|X_j|\Big)t^{-\frac{(2k_0+1)m}{2}}\|v_0\|_{L^2_{1/\rho}},
\end{multline*}
\begin{multline*}
\forall t \geq 1, \quad \|(\langle x_1, x \rangle+\langle \xi_1,D_x \rangle)\ ...\ (\langle x_m, x \rangle+\langle \xi_m,D_x \rangle)e^{t\mathscr{P}}v_0\|_{L^2_{1/\rho}}\\
\leq C^m(m!)^{\frac{2k_0+1}{2}}\Big(\prod_{j=1}^m|X_j|\Big)\|v_0\|_{L^2_{1/\rho}},
\end{multline*}
with $\|\cdot\|_{L^2_{1/\rho}}$ the $L^2(\rr^n,\rho(x)^{-1}dx)$-norm, where $0 \leq k_0 \leq n-1$ is the smallest integer satisfying (\ref{pav31.7}),
and where $|\cdot|$ denotes the Euclidean norm on $\rr^{2n}$.
\end{corollary}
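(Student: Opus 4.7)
The plan is to follow the scheme of the proof of Corollary~\ref{coro111}, replacing the pair $(P,\mathscr{L})$ by $(\mathscr{P},\mathfrak{L})$. The intertwining identity (\ref{bn1.7}) gives $e^{t\mathscr{P}}v_0=e^{-\frac{t}{2}\textrm{Tr}(B)}\sqrt{\rho}\,e^{-t\mathfrak{L}}(\sqrt{\rho}^{-1}v_0)$, which together with the isometries $\mathfrak{T},\mathfrak{T}^{-1}$ from (\ref{alw1}) reduces the problem to studying iterated differentiations of the contraction semigroup generated by the accretive quadratic operator $\mathfrak{L}=\overline{q}^w(x,D_x)$. As observed in the text, $\mathfrak{L}$ has zero singular space with the same structural integer $k_0$ as $\mathscr{L}$, since the singular space is determined by the kernels of $\textrm{Re }F(\textrm{Im }F)^j$ which are the same for $F$ and $\overline{F}$, and the Kalman rank condition (\ref{pav31.7}) characterizes $k_0$ independently of this sign change.

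The key computational step is to conjugate the first-order operators through multiplication by $\sqrt{\rho}$. Since $\sqrt{\rho}\propto e^{-\frac{1}{4}\langle Q_\infty^{-1}x,x\rangle}$ with $Q_\infty^{-1}$ symmetric, a direct computation gives $\langle \xi_j,D_x\rangle\sqrt{\rho}u=\sqrt{\rho}\bigl(\langle \xi_j,D_x\rangle+\tfrac{i}{2}\langle Q_\infty^{-1}\xi_j,x\rangle\bigr)u$, while $\langle x_j,x\rangle$ commutes with $\sqrt{\rho}$. Iterating and substituting into (\ref{bn1.7}) yields
\[
\prod_{j=1}^m\bigl(\langle x_j,x\rangle+\langle \xi_j,D_x\rangle\bigr)e^{t\mathscr{P}}v_0=e^{-\frac{t}{2}\textrm{Tr}(B)}\sqrt{\rho}\prod_{j=1}^m\bigl(\langle x_j+\tfrac{i}{2}Q_\infty^{-1}\xi_j,x\rangle+\langle \xi_j,D_x\rangle\bigr)e^{-t\mathfrak{L}}(\sqrt{\rho}^{-1}v_0),
\]
and the isometry of $\mathfrak{T}^{-1}$ transfers the $L^2_{1/\rho}$-norm on the left into the $L^2$-norm of the right-hand side, up to the factor $e^{-\frac{t}{2}\textrm{Tr}(B)}$.

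Next I would split each complex linear symbol into its real part $\langle x_j,x\rangle+\langle \xi_j,D_x\rangle$ and purely imaginary part $i\langle \tfrac{1}{2}Q_\infty^{-1}\xi_j,x\rangle$. Expanding the product of $m$ such sums produces at most $2^m$ terms, each a product of $m$ first-order operators associated to real phase space directions whose Euclidean norms are bounded by $(1+\tfrac{1}{2}\|Q_\infty^{-1}\|)|X_j|$. Applying Corollary~\ref{thj} to $\mathfrak{L}$ on each term and summing with the triangle inequality, we obtain the claimed short-time bound for $0<t\leq 1$, after absorbing the harmless constants $e^{-\frac{t}{2}\textrm{Tr}(B)}$ (which stays bounded on $[0,1]$), the $2^m$ from the expansion, and the $(1+\tfrac{1}{2}\|Q_\infty^{-1}\|)^m$ from the norm estimates, into the base constant $C$.

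For $t\geq 1$, I would apply the short-time bound at $t=1$ to $e^{\mathscr{P}}e^{(t-1)\mathscr{P}}v_0$ and combine it with the uniform bound $\|e^{s\mathscr{P}}\|_{\mathcal{L}(L^2_{1/\rho})}\leq C_5$ valid for all $s\geq 0$; the latter follows from (\ref{ev1}) applied to $\mathfrak{L}$ together with the intertwining identity, since the growth factor $e^{-\frac{s}{2}\textrm{Tr}(B)}=e^{\omega_0 s}$ exactly cancels the decay $e^{-\omega_0 s}$ of $\|e^{-s\mathfrak{L}}\|_{\mathcal{L}(L^2)}$. The main obstacle is mostly bookkeeping: tracking that the expansion produces exactly $2^m$ terms so the factor $2^m$ can be absorbed into $C^m$, and verifying carefully that Corollary~\ref{thj} applies to $\mathfrak{L}$ with the same structural parameter $k_0$ as the one appearing in (\ref{pav31.7}).
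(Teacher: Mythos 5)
Your argument is correct and is essentially the paper's proof: the paper simply mimics Corollary~\ref{coro111}, i.e., it conjugates by $\sqrt{\rho}$ via the intertwining identity (\ref{bn1.7}), commutes the first-order factors through $\sqrt{\rho}$ to produce the shifted directions $\langle x_j+\tfrac{i}{2}Q_{\infty}^{-1}\xi_j,x\rangle+\langle \xi_j,D_x\rangle$, and then invokes Corollary~\ref{thj} for $\mathfrak{L}=\overline{q}^w(x,D_x)$, whose symbol has non-negative real part and zero singular space with the same integer $k_0$ as in (\ref{pav31.7}). Your explicit splitting of the complex directions into real and imaginary parts (the $2^m$ terms, absorbed into $C^m$) and your handling of $t\geq 1$ by composing the $t=1$ bound with the uniform bound $\|e^{s\mathscr{P}}\|_{\mathcal{L}(L^2_{1/\rho})}\leq C_5$ coming from the cancellation $e^{-\frac{s}{2}\mathrm{Tr}(B)}=e^{\omega_0 s}$ against (\ref{ev1}) are harmless variants of the same computation.
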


Finally, we refrain from providing the statements and the proofs of the applications of Corollary~\ref{th0} and Theorem~\ref{th-1} to the study of degenerate hypoelliptic Fokker-Planck operators. The reader interested by this matter will easily obtain these results by mimicking the proofs given in Section~\ref{orn} for degenerate hypoelliptic Ornstein-Uhlenbeck operators.

\section{Appendix}\label{appendix}

We refer the reader to the works~\cite{gelfand,rodino1,rodino,toft} and the references herein for extensive expositions of the Gelfand-Shilov regularity theory.
The Gelfand-Shilov spaces $S_{\nu}^{\mu}(\rr^n)$, with $\mu,\nu>0$, $\mu+\nu\geq 1$, are defined as the spaces of smooth functions $f \in C^{\infty}(\rr^n)$ satisfying the estimates
$$\exists A,C>0, \quad |\partial_x^{\alpha}f(x)| \leq C A^{|\alpha|}(\alpha !)^{\mu}e^{-\frac{1}{A}|x|^{1/\nu}}, \quad x \in \rr^n, \ \alpha \in \mathbb{N}^n,$$
or, equivalently
$$\exists A,C>0, \quad \sup_{x \in \rr^n}|x^{\beta}\partial_x^{\alpha}f(x)| \leq C A^{|\alpha|+|\beta|}(\alpha !)^{\mu}(\beta !)^{\nu}, \quad \alpha, \beta \in \mathbb{N}^n.$$
These Gelfand-Shilov spaces  $S_{\nu}^{\mu}(\rr^n)$ may also be characterized as the spaces of Schwartz functions $f \in \mathscr{S}(\rr^n)$ satisfying the estimates
$$\exists C>0, \eps>0, \quad |f(x)| \leq C e^{-\eps|x|^{1/\nu}}, \quad x \in \rr^n, \qquad |\widehat{f}(\xi)| \leq C e^{-\eps|\xi|^{1/\mu}}, \quad \xi \in \rr^n.$$
In particular, we notice that Hermite functions belong to the symmetric Gelfand-Shilov space  $S_{1/2}^{1/2}(\rr^n)$. More generally, the symmetric Gelfand-Shilov spaces $S_{\mu}^{\mu}(\rr^n)$, with $\mu \geq 1/2$, can be nicely characterized through the decomposition into the Hermite basis $(\Psi_{\alpha})_{\alpha \in \mathbb{N}^n}$, see e.g. \cite{toft} (Proposition~1.2),
\begin{multline*}
f \in S_{\mu}^{\mu}(\rr^n) \Leftrightarrow f \in L^2(\rr^n), \ \exists t_0>0, \ \big\|\big((f,\Psi_{\alpha})_{L^2}\exp({t_0|\alpha|^{\frac{1}{2\mu}})}\big)_{\alpha \in \mathbb{N}^n}\big\|_{l^2(\mathbb{N}^n)}<+\infty\\
\Leftrightarrow f \in L^2(\rr^n), \ \exists t_0>0, \ \|e^{t_0\mathcal{H}^{1/2\mu}}f\|_{L^2}<+\infty,
\end{multline*}
where $\mathcal{H}=-\Delta_x+x^2$ stands for the harmonic oscillator.

\bigskip
\noindent
{\bf Acknowledgements.}
The authors are very grateful to the anonymous referee for the very interesting suggestion to try to extend the results obtained for quadratic operators with zero singular spaces to quadratic operators whose singular spaces enjoy more generally a symplectic structure. 
The first author acknowledges the kind hospitality of the Centre Henri Lebesgue and the University of Rennes~1 in June 2015. The research of the second and third authors was supported by the ANR NOSEVOL (Project: ANR 2011 BS0101901).

\end{document}